\def\dual{\,^{^{\complement}}\!}
\def\Rn{{\mathbb{R}^n}}
\def\i{\infty}
\def\a{\alpha}
 \newtheorem{thm}{Theorem}[section]
 \newtheorem{cor}[thm]{Corollary}
 \newtheorem{lem}[thm]{Lemma}
 \theoremstyle{definition}
 \newtheorem{defn}[thm]{Definition}
 \theoremstyle{remark}
 \newtheorem{rem}[thm]{Remark}
 \numberwithin{equation}{section}
\def\Rn{{\mathbb{R}^n}}
\def\a {\alpha}
\def\i{\infty}
\def\L1loc{L_{\Phi}^{\rm loc}(\Rn)}
\def\dual{\,^{^{\complement}}\!}
\newcommand{\es}{\mathop{\rm ess \; inf}\limits}
\begin{document}

\begin{center}
\LARGE Boundedness of fractional maximal operator and its commutators on generalized Orlicz-Morrey spaces
\end{center}

\

\centerline{\large Vagif S. Guliyev$^{a,b,}$\footnote{
{The research of V. Guliyev and F. Deringoz were partially supported by the grant of Ahi Evran University Scientific Research Projects (PYO.FEN.4003.13.003) and (PYO.FEN.4003-2.13.007).}
\\
E-mail adresses: vagif@guliyev.com (V.S. Guliyev), fderingoz@ahievran.edu.tr (F. Deringoz).}, Fatih Deringoz$^{a,1}$}

\

\centerline{$^{a}$\it Department of Mathematics, Ahi Evran University, Kirsehir, Turkey}

\centerline{$^{b}$\it Institute of Mathematics and Mechanics, Baku, Azerbaijan}

\

\begin{abstract}
We consider generalized Orlicz-Morrey spaces $M_{\Phi,\varphi}(\Rn)$ including their weak versions $WM_{\Phi,\varphi}(\Rn)$. We find the sufficient conditions on the pairs $(\varphi_{1},\varphi_{2})$ and $(\Phi, \Psi)$ which ensures the boundedness of the fractional maximal operator $M_{\a}$ from $M_{\Phi,\varphi_1}(\Rn)$ to $M_{\Psi,\varphi_2}(\Rn)$ and from $M_{\Phi,\varphi_1}(\Rn)$ to \linebreak $WM_{\Psi,\varphi_2}(\Rn)$. As applications of those results, the boundedness of the commutators of the fractional maximal operator $M_{b,\a}$
with $b \in BMO(\Rn)$ on the spaces $M_{\Phi,\varphi}(\Rn)$ is also obtained. In all the cases the conditions for the boundedness are given in terms of supremal-type inequalities on weights $\varphi(x,r)$, which do not assume any assumption on monotonicity of $\varphi(x,r)$ on $r$.
\end{abstract}

\

\

\noindent{\bf AMS Mathematics Subject Classification:} $~~$ 42B20, 42B25, 42B35; 46E30

\noindent{\bf Key words:} {generalized Orlicz-Morrey space; fractional maximal operator; commutator, BMO}

\

\section{Introduction}
Boundedness of classical operators of the Real analysis, such as the maximal operator, fractional maximal operator, Riesz potential and the singular integral operators etc, have been
extensively investigated in various function spaces. Results on weak and strong type inequalities for operators of this kind in Lebesgue spaces are classical and can be found for example in \cite{BenSharp, St, Torch}. These boundedness extended to several function spaces which are generalizations of $L_{p}$-spaces, for example, Orlicz spaces, Morrey spaces, Lorentz spaces, Herz spaces, etc.

Orlicz spaces, introduced in \cite{Orlicz1, Orlicz2}, are generalizations of Lebesgue spaces $L_p$. They are useful tools in harmonic analysis and its applications. For example, the Hardy-Littlewood maximal operator is bounded on $L_p$ for $1 < p < \infty$, but not on $L_1$. Using Orlicz spaces, we can investigate the boundedness of
the maximal operator near $p = 1$ more precisely (see  \cite{Cianchi1, Kita1, Kita2}).

On the other hand, Morrey spaces were introduced in \cite{Morrey} to estimate solutions of partial differential equations, and studied by many authors.

Let $f\in L_1^{\rm loc}(\Rn)$. The fractional maximal operator $M_{\a}$ and the Riesz potential operator $I_{\a}$ are  defined by
$$
M_{\a}f(x)=\sup_{t>0}|B(x,t)|^{-1+ \frac{\a}{n}}\int_{B(x,t)} |f(y)|dy,  ~~ 0 \le \a < n,
$$
$$
I_{\a}f(x)=\int_{\Rn} \frac{f(y)}{|x-y|^{n-\a}}dy,  ~~ 0 < \a < n.
$$
If $\a=0$, then $M \equiv M_{0}$ is the Hardy-Littlewood maximal operator.

The operator $M_{\a}$ is of weak type $(p,np/(n-\a p))$ if $1\le p\le n/\a$ and of strong type $(p,np/(n-\a p))$ if $1< p\le n/\a$.
Also the operator $I_{\a}$ is of weak type $(p,np/(n-\a p))$ if $1\le p< n/\a$ and of strong type $(p,np/(n-\a p))$ if $1< p< n/\a$.

The boundedness of $M_{\a}$ and $I_{\a}$ from Orlicz space $L_{\Phi}(\Rn)$ to $L_{\Psi}(\Rn)$ was studied by Cianchi \cite{Cianchi1}. For boundedness of $M_{\a}$ and $I_{\a}$ on Morrey spaces $M_{p,\lambda}(\mathbb{R}^{n})$, see Peetre (Spanne)\cite{Peetre}, Adams \cite{Adams}.

The definition of generalized Orlicz-Morrey spaces introduced in  \cite{DGS} and used
here is different from that of Sawano et al. \cite{SawSugTan} and Nakai \cite{Nakai3, Nakai1}.

In \cite{DGS}, the boundedness of the maximal operator $M$
and the Calder\'{o}n-Zygmund operator $T$ from one generalized Orlicz-Morrey space $M_{\Phi,\varphi_1}(\Rn)$ to $M_{\Phi,\varphi_2}(\Rn)$ and from $M_{\Phi,\varphi_1}(\Rn)$ to the weak space $WM_{\Phi,\varphi_2}(\Rn)$ was proved (see, also \cite{GulDerHasJIA}).
Also in \cite{GulDerJFSA} the authors prove the boundedness of the Riesz potential operator $I_{\a}$ and its commutator $[b,I_{\a}]$ from  $M_{\Phi,\varphi_1}(\Rn)$ to $M_{\Psi,\varphi_2}(\Rn)$ and from $M_{\Phi,\varphi_1}(\Rn)$ to $WM_{\Psi,\varphi_2}(\Rn)$.

The main purpose of this paper is to find sufficient conditions on the general Young functions $\Phi, \Psi$ and the functions $\varphi_1$, $\varphi_2$ which ensure the boundedness of $M_{\a}$ from $M_{\Phi,\varphi_1}(\Rn)$ to  $M_{\Psi,\varphi_2}(\Rn)$, from $M_{\Phi,\varphi_1}(\Rn)$ to $WM_{\Psi,\varphi_2}(\Rn)$ and in the case $b \in BMO$ the boundedness of the commutator of the fractional maximal operator $M_{b,\a}$ from $M_{\Phi,\varphi_1}(\Rn)$ to $M_{\Psi,\varphi_2}(\Rn)$.

In the next section we recall the definitions of Orlicz and Morrey spaces and give the definition of generalized Orlicz-Morrey spaces in Section 3. In Section 4 and Section 5 the results on boundedness of $M_{\a}$ and its commutator operator $M_{b,\a}$ is obtained.

By $A\lesssim B$ we mean that $A\le CB$ with some positive constant $C$ independent
of appropriate quantities. If $A\lesssim B$ and $B\lesssim A$, we write $A\thickapprox B$ and say that $A$
and $B$ are equivalent.

\

\section{Some preliminaries on Orlicz and Morrey spaces}

In the study of local properties of solutions to of partial differential equations, together with weighted Lebesgue spaces,  Morrey spaces $M_{p,\lambda}(\Rn)$ play an important role, see \cite{Gi}. Introduced by C. Morrey  \cite{Morrey} in 1938, they are defined by the norm
\begin{equation*}
\left\| f\right\|_{M_{p,\lambda}}: = \sup_{x, \; r>0 } r^{-\frac{\lambda}{p}} \|f\|_{L_{p}(B(x,r))},
\end{equation*}
where $0 \le \lambda \le  n,$ $1\le p < \infty.$ Here and everywhere in the sequel $B(x,r)$ stands for the
ball in $\mathbb{R}^n$ of radius $r$ centered at $x$. Let $|B(x,r)|$ be the Lebesgue measure of the ball $B(x,r)$ and $|B(x,r)|=v_n r^n$, where $v_n=|B(0,1)|$.

Note that $M_{p,0}=L_{p}(\Rn)$ and $M_{p,n}=L_{\infty}(\Rn)$. If $\lambda<0$ or $\lambda>n$, then $M_{p,\lambda }={\Theta}$,
where $\Theta$ is the set of all functions equivalent to $0$ on $\Rn$.

We also denote  by $WM_{p,\lambda} \equiv WM_{p,\lambda}(\Rn)$ the weak Morrey space of all functions $f\in WL_{p}^{\rm loc}(\Rn) $ for which
$$
\left\| f\right\|_{WM_{p,\lambda }} = \sup_{x\in \Rn, \; r>0} r^{-\frac{\lambda}{p}} \|f\|_{WL_{p}(B(x,r))} <\infty,
$$
where $WL_{p}(B(x,r))$ denotes the weak $L_p$-space.

We refer in particular to \cite{KJF} for the classical Morrey spaces.

We recall the definition of Young functions.

\begin{defn}\label{def2} A function $\Phi : [0,+\infty) \rightarrow [0,\infty]$ is called a Young function if $\Phi$ is convex, left-continuous, $\lim\limits_{r\rightarrow +0} \Phi(r) = \Phi(0) = 0$ and $\lim\limits_{r\rightarrow +\infty} \Phi(r) = \infty$.
\end{defn}

From the convexity and $\Phi(0) = 0$ it follows that any Young function is increasing.
If there exists $s \in (0,+\infty)$ such that $\Phi(s) = +\infty$, then $\Phi(r) = +\infty$ for $r \geq s$.

%
%
Let $\mathcal{Y}$ be the set of all Young functions $\Phi$ such that
\begin{equation*}
0<\Phi(r)<+\infty\qquad \text{for} \qquad 0<r<+\infty
\end{equation*}
If $\Phi \in \mathcal{Y}$, then $\Phi$ is absolutely continuous on every closed interval in $[0,+\infty)$
and bijective from $[0,+\infty)$ to itself.

\begin{defn} (Orlicz Space). For a Young function $\Phi$, the set
$$L_{\Phi}(\Rn)=\left\{f\in L_1^{\rm loc}(\Rn): \int_{\Rn}\Phi(k|f(x)|)dx<+\infty
 \text{ for some $k>0$  }\right\}$$
is called Orlicz space. If $\Phi(r)=r^{p},\, 1\le p<\i$, then $L_{\Phi}(\Rn)=L_{p}(\Rn)$. If $\Phi(r)=0,\,(0\le r\le 1)$ and $\Phi(r)=\i,\,(r> 1)$, then $L_{\Phi}(\Rn)=L_{\i}(\Rn)$.
The  space $L_{\Phi}^{\rm loc}(\Rn)$ endowed with the natural topology  is defined as the set of all functions $f$ such that  $f\chi_{_B}\in L_{\Phi}(\Rn)$ for all balls $B \subset \Rn$.
We refer to the books \cite{KokKrbec, KrasnRut, RaoRen} for the theory of Orlicz Spaces.
\end{defn}

Note that, $L_{\Phi}(\Rn)$ is a Banach space with respect to the norm
$$\|f\|_{L_{\Phi}}=\inf\left\{\lambda>0:\int_{\Rn}\Phi\Big(\frac{|f(x)|}{\lambda}\Big)dx\leq 1\right\},$$
 so  that
$$\int_{\Rn}\Phi\Big(\frac{|f(x)|}{\|f\|_{L_{\Phi}}}\Big)dx\leq 1.$$
For a measurable set $\Omega\subset \mathbb{R}^{n}$, a measurable function $f$ and $t>0$, let
$$
m(\Omega,\ f,\ t)=|\{x\in\Omega:|f(x)|>t\}|.
$$
In the case $\Omega=\mathbb{R}^{n}$, we shortly denote it by $m(f,\ t)$.
\begin{defn} The weak Orlicz space
$$
WL_{\Phi}(\mathbb{R}^{n}):=\{f\in L^{\rm loc}_{1}(\mathbb{R}^{n}):\Vert f\Vert_{WL_{\Phi}}<+\infty\}
$$
is defined by the norm
$$
\Vert f\Vert_{WL_{\Phi}}=\inf\Big\{\lambda>0\ :\ \sup_{t>0}\Phi(t)m\Big(\frac{f}{\lambda},\ t\Big)\ \leq 1\Big\}.
$$
\end{defn}


For a Young function $\Phi$ and  $0 \leq s \leq +\infty$, let
$$\Phi^{-1}(s)=\inf\{r\geq 0: \Phi(r)>s\}\qquad (\inf\emptyset=+\infty).$$
If $\Phi \in \mathcal{Y}$, then $\Phi^{-1}$ is the usual inverse function of $\Phi$. We note that
\begin{equation}\label{younginverse}
\Phi(\Phi^{-1}(r))\leq r \leq \Phi^{-1}(\Phi(r)) \quad \text{ for } 0\leq r<+\infty.
\end{equation}
A Young function $\Phi$ is said to satisfy the $\Delta_2$-condition, denoted by  $\Phi \in \Delta_2$, if
$$
\Phi(2r)\le k\Phi(r) \text{    for } r>0
$$
for some $k>1$. If $\Phi \in \Delta_2$, then $\Phi \in \mathcal{Y}$. A Young function $\Phi$ is said to satisfy the $\nabla_2$-condition, denoted also by  $\Phi \in \nabla_2$, if
$$\Phi(r)\leq \frac{1}{2k}\Phi(kr),\qquad r\geq 0,$$
for some $k>1$. The function $\Phi(r) = r$ satisfies the $\Delta_2$-condition but does not satisfy the $\nabla_2$-condition.
If $1 < p < \infty$, then $\Phi(r) = r^p$ satisfies both the conditions. The function $\Phi(r) = e^r - r - 1$ satisfies the
$\nabla_2$-condition but does not satisfy the $\Delta_2$-condition.

For a Young function $\Phi$, the complementary function $\widetilde{\Phi}(r)$ is defined by
\begin{equation*}
\widetilde{\Phi}(r)=\left\{
\begin{array}{ccc}
\sup\{rs-\Phi(s): s\in [0,\infty)\}
& , & r\in [0,\infty) \\
+\infty&,& r=+\infty.
\end{array}
\right.
\end{equation*}
The complementary function  $\widetilde{\Phi}$ is also a Young function and $\widetilde{\widetilde{\Phi}}=\Phi$. If $\Phi(r)=r$, then $\widetilde{\Phi}(r)=0$ for $0\leq r \leq 1$ and $\widetilde{\Phi}(r)=+\infty$
  for $r>1$. If $1 < p < \infty$, $1/p+1/p^\prime= 1$ and $\Phi(r) =
r^p/p$, then $\widetilde{\Phi}(r) = r^{p^\prime}/p^\prime$. If $\Phi(r) = e^r-r-1$, then $\widetilde{\Phi}(r) = (1+r) \log(1+r)-r$. Note that $\Phi \in \nabla_2$ if and only if $\widetilde{\Phi} \in \Delta_2$. It is known that
\begin{equation}\label{2.3}
r\leq \Phi^{-1}(r)\widetilde{\Phi}^{-1}(r)\leq 2r \qquad \text{for } r\geq 0.
\end{equation}

Note that Young functions satisfy the  properties
\begin{equation*}
\Phi(\a t)\leq \a \Phi(t)
\end{equation*}
for  all $0\le\a\le1$ and  $0 \le t < \i$, and
\begin{equation*}
\Phi(\beta t)\geq \beta \Phi(t)
\end{equation*}
for  all $\beta>1$ and  $0 \le t < \i$.

The following analogue of the H\"older inequality is known, see  \cite{Weiss}.
\begin{thm} \cite{Weiss} \label{HolderOr}
For a Young function $\Phi$ and its complementary function  $\widetilde{\Phi}$,
the following inequality is valid
$$\|fg\|_{L_{1}(\Rn)} \leq 2 \|f\|_{L_{\Phi}} \|g\|_{L_{\widetilde{\Phi}}}.$$
\end{thm}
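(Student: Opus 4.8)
The plan is to deduce the inequality from the pointwise Young inequality
$$st \le \Phi(s) + \widetilde{\Phi}(t), \qquad s,t \ge 0,$$
which is immediate from the definition of the complementary function: since $\widetilde{\Phi}(t) = \sup_{s\ge 0}\{st - \Phi(s)\}$, for every fixed $s$ we have $\widetilde{\Phi}(t) \ge st - \Phi(s)$, and rearranging gives the claim. This is the one genuinely needed ingredient; everything else is normalization and integration.

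First I would dispose of the degenerate cases. If $\|f\|_{L_{\Phi}} = 0$ or $\|g\|_{L_{\widetilde{\Phi}}} = 0$, then $f = 0$ or $g = 0$ almost everywhere, so $fg = 0$ and both sides vanish; if either norm equals $+\infty$, the right-hand side is infinite and there is nothing to prove. Hence I may assume $0 < \lambda := \|f\|_{L_{\Phi}} < \i$ and $0 < \mu := \|g\|_{L_{\widetilde{\Phi}}} < \i$.

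The core step is to normalize and integrate. By the defining property of the Luxemburg norm recorded earlier in the excerpt we have $\int_{\Rn}\Phi(|f(x)|/\lambda)\,dx \le 1$, and the analogous bound $\int_{\Rn}\widetilde{\Phi}(|g(x)|/\mu)\,dx \le 1$ for $g$ and $\widetilde{\Phi}$. Applying the Young inequality pointwise with $s = |f(x)|/\lambda$ and $t = |g(x)|/\mu$, and then integrating over $\Rn$, I obtain
$$\frac{1}{\lambda\mu}\int_{\Rn}|f(x)g(x)|\,dx \le \int_{\Rn}\Phi\Big(\frac{|f(x)|}{\lambda}\Big)\,dx + \int_{\Rn}\widetilde{\Phi}\Big(\frac{|g(x)|}{\mu}\Big)\,dx \le 2.$$
Multiplying through by $\lambda\mu$ yields precisely $\|fg\|_{L_{1}(\Rn)} \le 2\,\|f\|_{L_{\Phi}}\|g\|_{L_{\widetilde{\Phi}}}$.

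There is no serious obstacle here; the only point deserving attention is that the normalization inequality $\int_{\Rn}\Phi(|f|/\|f\|_{L_{\Phi}})\,dx \le 1$ genuinely holds, which is guaranteed by the left-continuity of $\Phi$ together with the infimum definition of the norm and is stated explicitly above. The constant $2$ on the right is simply an artifact of summing the two unit bounds coming from $\Phi$ and $\widetilde{\Phi}$; it reflects the normalization \eqref{2.3} relating $\Phi^{-1}$ and $\widetilde{\Phi}^{-1}$ and cannot in general be replaced by $1$ for the Luxemburg norm, which is why it appears in the statement.
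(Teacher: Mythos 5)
Your proof is correct and complete: the pointwise Young inequality $st\le\Phi(s)+\widetilde{\Phi}(t)$, the normalization $\int_{\Rn}\Phi(|f|/\|f\|_{L_{\Phi}})\,dx\le 1$ (stated explicitly in the paper after the definition of the Luxemburg norm), and the handling of the degenerate cases are all that is needed. The paper itself gives no proof of this statement --- it is quoted from Weiss --- and your argument is precisely the standard one behind that citation. One small aside: your closing remark has the logic reversed; the constant $2$ comes directly from summing the two unit integrals, and the inequality $r\le\Phi^{-1}(r)\widetilde{\Phi}^{-1}(r)\le 2r$ is a consequence of Young's inequality rather than an explanation of the factor $2$ here.
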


The following lemma is valid.
\begin{lem}\label{lem4.0}  \cite{BenSharp, LiuWang}
Let $\Phi$ be a Young function and $B$ a set in $\mathbb{R}^n$ with finite Lebesgue measure. Then
$$
\|\chi_{_B}\|_{WL_{\Phi}(\Rn)} = \|\chi_{_B}\|_{L_{\Phi}(\Rn)} = \frac{1}{\Phi^{-1}\left(|B|^{-1}\right)}.
$$
\end{lem}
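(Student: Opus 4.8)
The plan is to compute both norms directly from their definitions and to observe that in each case the problem collapses to a single one–variable inequality involving $\Phi$ evaluated at $1/\lambda$. First I would treat the strong norm. Since $\chi_{_B}(x)$ takes only the values $1$ on $B$ and $0$ off $B$, and $\Phi(0)=0$, the modular reduces to
\begin{equation*}
\int_{\Rn}\Phi\Big(\frac{\chi_{_B}(x)}{\lambda}\Big)\,dx = |B|\,\Phi\Big(\frac{1}{\lambda}\Big),
\end{equation*}
so that by the definition of the Luxemburg norm
\begin{equation*}
\|\chi_{_B}\|_{L_{\Phi}} = \inf\Big\{\lambda>0:\ \Phi\big(\tfrac{1}{\lambda}\big)\le |B|^{-1}\Big\}.
\end{equation*}

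The next step is to evaluate this infimum in terms of $\Phi^{-1}$. Writing $s=1/\lambda$, I need the largest $s\ge0$ with $\Phi(s)\le |B|^{-1}$. Because $\Phi$ is increasing and left-continuous, the definition $\Phi^{-1}(u)=\inf\{r\ge0:\Phi(r)>u\}$ gives $\sup\{s\ge0:\Phi(s)\le |B|^{-1}\}=\Phi^{-1}(|B|^{-1})$; equivalently, since $\lambda\mapsto\Phi(1/\lambda)$ is decreasing, the admissible set of $\lambda$ is an interval of the form $[\lambda_0,\infty)$ with $\lambda_0=1/\Phi^{-1}(|B|^{-1})$. Admissibility of $\lambda_0$ itself follows from \eqref{younginverse}, which yields $\Phi\big(\Phi^{-1}(|B|^{-1})\big)\le |B|^{-1}$, and no smaller $\lambda$ works by monotonicity. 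Hence $\|\chi_{_B}\|_{L_{\Phi}}=1/\Phi^{-1}(|B|^{-1})$.

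For the weak norm I would compute the distribution function of $\chi_{_B}/\lambda$ explicitly: for $t>0$ one has $\chi_{_B}(x)/\lambda>t$ exactly when $x\in B$ and $t<1/\lambda$, so that $m(\chi_{_B}/\lambda,t)=|B|$ for $0<t<1/\lambda$ and $m(\chi_{_B}/\lambda,t)=0$ for $t\ge 1/\lambda$. Consequently
\begin{equation*}
\sup_{t>0}\Phi(t)\,m\Big(\frac{\chi_{_B}}{\lambda},t\Big) = |B|\sup_{0<t<1/\lambda}\Phi(t) = |B|\,\Phi\Big(\frac{1}{\lambda}\Big),
\end{equation*}
where the last equality is exactly where left-continuity of $\Phi$ enters: the supremum of the increasing function $\Phi$ over $(0,1/\lambda)$ coincides with its left-hand limit at $1/\lambda$, which equals $\Phi(1/\lambda)$. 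This reduces the weak norm to the identical inequality $|B|\,\Phi(1/\lambda)\le 1$ obtained above, so $\|\chi_{_B}\|_{WL_{\Phi}}=\|\chi_{_B}\|_{L_{\Phi}}=1/\Phi^{-1}(|B|^{-1})$.

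The only delicate points, and thus where I expect the argument to require care rather than new ideas, are the two places where the structure of $\Phi$ is genuinely used: the reduction $\sup_{0<t<1/\lambda}\Phi(t)=\Phi(1/\lambda)$, which relies essentially on left-continuity (without it the weak and strong norms could differ), and the conversion of the sublevel condition $\Phi(s)\le |B|^{-1}$ into the value $\Phi^{-1}(|B|^{-1})$, which must be verified together with the attainment of the infimum over $\lambda$ via \eqref{younginverse}. One should also record that since $0<|B|<\infty$, since $\lim_{r\to+0}\Phi(r)=0$, and since $\lim_{r\to+\infty}\Phi(r)=\infty$, the quantity $\Phi^{-1}(|B|^{-1})$ is finite and strictly positive, so its reciprocal is well defined; the degenerate $L_{\infty}$-type choice of $\Phi$ is covered by the same formula.
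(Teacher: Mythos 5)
Your computation is correct and complete: the reduction of both modulars to the single inequality $|B|\,\Phi(1/\lambda)\le 1$, the identification of the admissible set of $\lambda$ as $[1/\Phi^{-1}(|B|^{-1}),\infty)$ via \eqref{younginverse}, and the use of left-continuity to get $\sup_{0<t<1/\lambda}\Phi(t)=\Phi(1/\lambda)$ in the weak case are exactly the points that need care, and you handle all of them. The paper itself gives no proof of this lemma, citing \cite{BenSharp, LiuWang} instead, so there is nothing internal to compare against; your argument is the standard direct verification found in those references and can stand as a self-contained proof.
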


In the next sections where we prove our main estimates, we use the following lemma, which follows  from Theorem \ref{HolderOr}, Lemma \ref{lem4.0} and the inequality \eqref{2.3}.
\begin{lem}\label{lemHold}
For a Young function $\Phi$ and $B=B(x,r)$, the following inequality is valid
$$\|f\|_{L_{1}(B)} \leq 2 |B| \Phi^{-1}\left(|B|^{-1}\right) \|f\|_{L_{\Phi}(B)} .$$
\end{lem}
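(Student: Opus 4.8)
The plan is to reduce the $L_1$-norm over the ball to a product of an Orlicz norm and the Orlicz norm of a characteristic function via the H\"older inequality, and then to convert the resulting factor using the elementary inequality \eqref{2.3}. First I would write the $L_1$-norm as a product against the characteristic function of $B$:
$$\|f\|_{L_{1}(B)} = \int_{B} |f(x)|\,dx = \|(f\chi_{_B})\,\chi_{_B}\|_{L_{1}(\Rn)}.$$
Applying Theorem \ref{HolderOr} to the pair $f\chi_{_B}$ and $\chi_{_B}$, with $\widetilde{\Phi}$ the complementary function of $\Phi$, gives
$$\|f\|_{L_{1}(B)} \le 2\,\|f\chi_{_B}\|_{L_{\Phi}} \,\|\chi_{_B}\|_{L_{\widetilde{\Phi}}} = 2\,\|f\|_{L_{\Phi}(B)}\,\|\chi_{_B}\|_{L_{\widetilde{\Phi}}},$$
where the last equality is just the definition of the restricted Orlicz norm $\|\cdot\|_{L_{\Phi}(B)}$.

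Next I would evaluate $\|\chi_{_B}\|_{L_{\widetilde{\Phi}}}$ by Lemma \ref{lem4.0} applied to the Young function $\widetilde{\Phi}$, obtaining $\|\chi_{_B}\|_{L_{\widetilde{\Phi}}} = 1/\widetilde{\Phi}^{-1}\left(|B|^{-1}\right)$. It then remains to control this factor. Taking $r=|B|^{-1}$ in the left-hand inequality of \eqref{2.3}, namely $r\le \Phi^{-1}(r)\widetilde{\Phi}^{-1}(r)$, yields $\widetilde{\Phi}^{-1}\left(|B|^{-1}\right)\ge |B|^{-1}/\Phi^{-1}\left(|B|^{-1}\right)$, and hence
$$\frac{1}{\widetilde{\Phi}^{-1}\left(|B|^{-1}\right)} \le |B|\,\Phi^{-1}\left(|B|^{-1}\right).$$
Substituting this into the previous display produces exactly $\|f\|_{L_{1}(B)} \le 2\,|B|\,\Phi^{-1}\left(|B|^{-1}\right)\,\|f\|_{L_{\Phi}(B)}$.

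Since every step is a direct application of an already-established fact, I do not expect any genuine obstacle; the only points requiring a moment's care are the identification $\|f\chi_{_B}\|_{L_{\Phi}(\Rn)}=\|f\|_{L_{\Phi}(B)}$, which is immediate from the definition of the Orlicz norm restricted to a subset, and the choice of the correct (left-hand) direction of \eqref{2.3} so that the inequality goes the right way.
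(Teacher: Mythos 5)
Your proof is correct and follows exactly the route the paper indicates: the Orlicz--H\"older inequality (Theorem \ref{HolderOr}) applied to $f\chi_{_B}$ and $\chi_{_B}$, the evaluation $\|\chi_{_B}\|_{L_{\widetilde{\Phi}}}=1/\widetilde{\Phi}^{-1}(|B|^{-1})$ from Lemma \ref{lem4.0}, and the left-hand side of \eqref{2.3} to convert $\widetilde{\Phi}^{-1}$ back to $\Phi^{-1}$. The paper gives no further detail, so your write-up is simply a fleshed-out version of its own argument.
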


\

Necessary and sufficient conditions on $(\Phi, \Psi)$ for the boundedness of $M_{\a}$ and $I_{\a}$ from Orlicz spaces $L_{\Phi}(\Rn)$ to $L_{\Psi}(\Rn)$ and $L_{\Phi}(\Rn)$ to $WL_{\Psi}(\Rn)$
have been obtained in \cite[ Theorem 1 and 2]{Cianchi1}. In the statement of the theorems, $\Psi_{p}$ is the Young function associated with the Young function $\Psi$ and $p\in(1,\i]$ whose Young conjugate is given by
\begin{equation}\label{chi2.3}
\widetilde{\Psi_{p}}(s)=\int_{0}^{s}r^{p^{\prime}-1}(\mathcal{B}_{p}^{-1}(r^{p^{\prime}}))^{p^{\prime}}dr,
\end{equation}
where
\begin{center}
$\mathcal{B}_{p}(s)= \int_{0}^{s}\frac{\Psi(t)}{t^{1+p^{\prime}}}dt$
\end{center}
and $p^{\prime}$, the Holder conjugate of $p$, equals either $p/(p-1)$ or 1, according to whether $ p<\infty$ or $ p=\infty$ and $\Phi_{p}$ denotes the Young function defined by
\begin{equation}\label{chi2.13}
\Phi_{p}(s)= \int_{0}^{s}r^{p^{\prime}-1}(\mathcal{A}_{p}^{-1}(r^{p'}))^{p'}dr,
\end{equation}
where
\begin{center}
$\mathcal{A}_{p}(s)= \int_{0}^{s}\frac{\widetilde{\Phi}(t)}{t^{1+p^{\prime}}}dt$.
\end{center}

Recall that, if $\Phi$ and $\Psi$ are functions from $[0,\i)$ into $[0,\i]$, then $\Psi$ is said to dominate $\Phi$ globally if a positive constant $c$ exists such that $\Phi(s)\le\Psi(cs)$ for all $s\geq0$.

\begin{thm}\label{bounFrMaxOrl} \cite{Cianchi1}

(i)The fractional maximal operator $M_{\a}$ is bounded from $L_{\Phi}(\Rn)$ to $WL_{\Psi}(\Rn)$ if and only if
\begin{equation}\label{condweakFrM}
\text{$\Phi$ dominates globally the function $Q$,}
\end{equation}
whose inverse is given by
$$
Q^{-1}(r)=r^{\a/n}\Psi^{-1}(r).
$$

(ii) The fractional maximal operator $M_{\a}$ is bounded from $L_{\Phi}(\Rn)$ to $L_{\Psi}(\Rn)$ if and only if
\begin{equation}\label{condstrFrM}
\int_{0}^{1} \frac{\Psi(t)}{t^{1+n/(n-\a)}}dt<\i \text{  and $\Phi$ dominates globally the function $\Psi_{n/\a}$}.
\end{equation}
\end{thm}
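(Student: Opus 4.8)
The plan is to let the local Orlicz–Hölder estimate of Lemma~\ref{lemHold} do the heavy lifting: for $B=B(x,t)$ it gives $\int_B|f|\le 2|B|\,\Phi^{-1}(|B|^{-1})\|f\|_{L_\Phi}$, so inserting it into the definition of $M_\a$ produces the pointwise bound
$$
M_\a f(x)\le 2\sup_{t>0}|B(x,t)|^{\a/n}\,\Phi^{-1}\big(|B(x,t)|^{-1}\big)\,\|f\|_{L_\Phi}.
$$
Thus the whole question is controlled by the growth of $r\mapsto r^{-\a/n}\Phi^{-1}(r)$ (with $r=|B|^{-1}$), and I would read the hypothesis as a comparison of this growth to $\Psi$: since $\Phi,\Psi\in\mathcal Y$, the assertion that $\Phi$ dominates $Q$ globally is equivalent, after inverting, to $\Phi^{-1}(r)\le c\,Q^{-1}(r)=c\,r^{\a/n}\Psi^{-1}(r)$ for all $r\ge0$, which is exactly the inequality that turns a $\Phi$-estimate on a ball into a $\Psi$-level.

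For sufficiency in (i) I would normalize $\|f\|_{L_\Phi}\le1$, fix $\lambda>0$, and run a Vitali argument on $E_\lambda=\{M_\a f>\lambda\}$: choosing for each $x\in E_\lambda$ a ball $B_x$ with $|B_x|^{\a/n-1}\int_{B_x}|f|>\lambda$, the $5r$-covering lemma gives disjoint $\{B_j\}$ with $E_\lambda\subset\bigcup_j 5B_j$. Feeding the selection inequality into Orlicz–Hölder and using the inverted domination yields $\Psi^{-1}(|B_j|^{-1})>\lambda/(2c)$, hence $|B_j|<1/\Psi(\mu)$ with $\mu=\lambda/(2c)$. Next I would apply Jensen to $\Phi$ on each $B_j$ to get $\int_{B_j}\Phi(|f|)>|B_j|\,\Phi\big(\lambda|B_j|^{-\a/n}\big)$, and then check, again from the inverted domination and \eqref{younginverse}, that $\Phi\big(\lambda|B_j|^{-\a/n}\big)\ge\Psi(\mu)$. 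Disjointness then gives
$$
|E_\lambda|\le 5^n\sum_j|B_j|<\frac{5^n}{\Psi(\mu)}\sum_j\int_{B_j}\Phi(|f|)\le\frac{5^n}{\Psi(\lambda/(2c))},
$$
which is the weak-type bound $\|M_\a f\|_{WL_\Psi}\lesssim\|f\|_{L_\Phi}$.

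Necessity in (i) I would obtain by testing on $f=\chi_B$. Using the elementary lower bound $M_\a\chi_B\gtrsim|B|^{\a/n}$ on $B$ together with Lemma~\ref{lem4.0} gives
$$
\frac{|B|^{\a/n}}{\Psi^{-1}(|B|^{-1})}\lesssim\|M_\a\chi_B\|_{WL_\Psi}\lesssim\|\chi_B\|_{L_\Phi}=\frac{1}{\Phi^{-1}(|B|^{-1})},
$$
and letting $B$ range over all balls makes $r=|B|^{-1}$ fill $(0,\i)$, so $\Phi^{-1}(r)\lesssim r^{\a/n}\Psi^{-1}(r)=Q^{-1}(r)$, i.e. $\Phi$ dominates $Q$ globally.

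The hard part will be the strong-type statement (ii). Here the plan is to pass to decreasing rearrangements and reduce the boundedness of $M_\a$ to a one-dimensional Hardy-type inequality for $g\mapsto\sup_{s\ge t}s^{\a/n-1}\int_0^s g$ on the cone of nonincreasing $g=f^*$, measured from $L_\Phi$ to $L_\Psi$. Characterizing that Hardy inequality between Orlicz norms is what should produce the modified Young function $\Psi_{n/\a}$ of \eqref{chi2.3} and the requirement that $\Phi$ dominate it, while the integrability condition $\int_0^1\Psi(t)\,t^{-1-n/(n-\a)}\,dt<\i$ is the borderline condition that keeps the target in the strong space $L_\Psi$ rather than only $WL_\Psi$ (it prevents $\Psi$ from being too flat near $0$). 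I expect the genuine obstacles to be the sharp two-sided rearrangement estimate for $M_\a$ and the weighted Hardy inequality on the decreasing cone; both necessity and sufficiency in (ii) hinge on carrying these through with matching constants, after which \eqref{condstrFrM} falls out.
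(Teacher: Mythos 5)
This theorem is quoted in the paper from \cite{Cianchi1} without proof, so there is no in-paper argument to compare yours against; I can only judge the attempt on its own merits. Your part (i) is essentially a correct, self-contained argument: the Vitali selection combined with Lemma \ref{lemHold}, the inverted form $\Phi^{-1}(r)\lesssim r^{\a/n}\Psi^{-1}(r)$ of the domination hypothesis, and Jensen's inequality on the selected balls is the standard route to the weak-type bound, and testing on $f=\chi_B$ together with Lemma \ref{lem4.0} gives necessity. Three points still need tidying: the selected balls must have bounded radii before the $5r$-covering lemma applies (your bound $|B_j|\le 1/\Psi(\mu)$ supplies this, but only after the domination has been invoked, so order the steps accordingly); the final estimate yields $\Psi(\lambda/(2c))\,|E_\lambda|\le 5^n$ rather than $\le 1$, so the constant $5^n$ must be absorbed into the dilation of $\lambda$ via convexity of $\Psi$; and the chain $\Phi\bigl(\lambda|B_j|^{-\a/n}\bigr)\ge Q(\cdot)\ge\Psi(\mu)$ needs care with the generalized inverse, since $Q(Q^{-1}(r))\le r$ rather than $\ge r$.

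The genuine gap is part (ii). What you have written there is a plan, not a proof: you reduce the strong-type statement to (a) a two-sided rearrangement estimate of the form $(M_\a f)^*(t)\approx\sup_{s\ge t}s^{\a/n-1}\int_0^s f^*(u)\,du$ and (b) a characterization of the boundedness of this supremal Hardy-type operator from $L_\Phi$ to $L_\Psi$ on the cone of nonincreasing functions, and then you defer both. Those two steps are the entire content of the result: the specific Young function $\Psi_{n/\a}$ defined through \eqref{chi2.3} and the integrability condition $\int_0^1\Psi(t)t^{-1-n/(n-\a)}\,dt<\i$ can only emerge from actually solving (b), and nothing in your sketch produces them. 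As it stands, part (ii) is not established; you would either have to carry out the rearrangement reduction and the one-dimensional Orlicz--Hardy characterization in full, or do what the paper does and cite Cianchi's theorem, where precisely this program is executed.
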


\begin{thm}\label{bounPotOrl} \cite{Cianchi1}
Let $0<\a<n$. Let $\Phi$ and $\Psi$ Young functions and let $\Phi_{n/\alpha}$ and $\Psi_{n/\alpha}$ be the Young functions defined as in \eqref{chi2.13} and \eqref{chi2.3}, respectively. Then

(i)The Riesz potential $I_{\a}$ is bounded from $L_{\Phi}(\Rn)$ to $WL_{\Psi}(\Rn)$ if and only if
\begin{equation}\label{condweakpot}
\int_{0}^{1}\widetilde{\Phi}(t)/t^{1+n/(n-\alpha)}dt<\infty \text{  and $\Phi_{n/\alpha}$ dominates $\Psi$ globally.}
\end{equation}

(ii) The Riesz potential $I_{\a}$ is bounded from $L_{\Phi}(\Rn)$ to $L_{\Psi}(\Rn)$ if and only if
\begin{align}\label{condstrpot}
&\int_{0}^{1}\widetilde{\Phi}(t)/t^{1+n/(n-\alpha)}dt<\i, ~~ \int_{0}^{1}\Psi(t)/t^{1+n/(n-\alpha)}dt<\i, \notag
\\
& \text{ $\Phi$ dominates $\Psi_{n/\alpha}$  globally and $\Phi_{n/\alpha}$ dominates $\Psi$ globally.}
\end{align}
\end{thm}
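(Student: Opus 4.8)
The plan is to pass from the two operator inequalities to one-dimensional inequalities between decreasing rearrangements, and then to read the latter as weighted Hardy inequalities whose Orlicz-norm boundedness is encoded by the auxiliary Young functions \eqref{chi2.3} and \eqref{chi2.13}. Since $L_{\Phi}(\Rn)$, $L_{\Psi}(\Rn)$ and $WL_{\Psi}(\Rn)$ are all rearrangement-invariant, the first step is to record the relevant representations: $\|f\|_{L_{\Phi}}$ depends only on the decreasing rearrangement $f^{*}$, while a standard computation (in the spirit of Lemma \ref{lem4.0} together with \eqref{younginverse}) gives the equivalence $\|g\|_{WL_{\Psi}}\approx\sup_{s>0} g^{*}(s)/\Psi^{-1}(1/s)$. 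These turn the target norms into purely functional conditions on rearrangements.

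Next I would invoke the O'Neil rearrangement estimate for the Riesz kernel. The decreasing rearrangement of $|x|^{-(n-\a)}$ is comparable to $s^{\a/n-1}$, so the O'Neil inequality for convolutions yields, for every $f$,
\[
(I_{\a}f)^{*}(t)\;\lesssim\; t^{\a/n-1}\int_{0}^{t} f^{*}(s)\,ds \;+\; \int_{t}^{\i} f^{*}(s)\,s^{\a/n-1}\,ds \;=:\; H_{0}f^{*}(t)+H_{1}f^{*}(t).
\]
This splits $I_{\a}$ into a weighted Hardy averaging operator $H_{0}$ and a conjugate Hardy operator $H_{1}$, both acting on the nonincreasing function $f^{*}$, and reduces the sufficiency direction of (i) and (ii) to the boundedness of $H_{0}$ and $H_{1}$ from the rearrangement representation of $L_{\Phi}$ into that of $L_{\Psi}$, respectively $WL_{\Psi}$.

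The core is then the Orlicz analogue of the two weighted Hardy inequalities, whose characterization produces precisely the auxiliary functions \eqref{chi2.3} and \eqref{chi2.13}. Note first that, with $p=n/\a$ and $p'=n/(n-\a)$, the two convergence integrals in \eqref{condstrpot} are exactly $\mathcal{B}_{n/\a}(1)<\i$ and $\mathcal{A}_{n/\a}(1)<\i$; their finiteness guarantees that $\Psi_{n/\a}$ and $\Phi_{n/\a}$ are genuine finite-valued Young functions, so that the domination statements are meaningful. Solving the extremal problem for the modular $\int\Psi(\cdot)$ under a bound on $\int\Phi(\cdot)$ shows that the two Hardy inequalities hold if and only if the two domination relations ``$\Phi$ dominates $\Psi_{n/\a}$'' and ``$\Phi_{n/\a}$ dominates $\Psi$'' hold. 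For the strong estimate (ii) both $H_{0}$ and $H_{1}$ must be controlled, which yields all four conditions of \eqref{condstrpot}. For the weak estimate (i) the target is the larger space $WL_{\Psi}$, whose rearrangement norm only sees $\sup_{s} (I_{\a}f)^{*}(s)/\Psi^{-1}(1/s)$; this is less demanding, and the whole O'Neil bound is already controlled under the single pair \eqref{condweakpot}, namely the finiteness of $\mathcal{A}_{n/\a}$ together with $\Phi_{n/\a}$ dominating $\Psi$.

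For necessity I would test the inequalities on radially decreasing inputs, in particular on normalized characteristic functions $\chi_{B(0,r)}$ across all scales $r$, evaluating norms via Lemma \ref{lem4.0}; for such functions the O'Neil bound is sharp, so the failure of either Hardy inequality transfers directly to the failure of the associated domination or convergence condition. The step I expect to be the main obstacle is precisely this Orlicz--Hardy characterization: identifying, for a convex-modular target, the optimal Young function governing a weighted Hardy operator is a genuine extremal computation rather than a routine estimate, and the degenerate endpoints must be handled with care — where $\Phi$ or $\Psi$ fails to lie in $\mathcal{Y}$ (jumping to $+\i$, or vanishing on an initial interval) the inverses appearing in \eqref{chi2.3} and \eqref{chi2.13} have to be read as generalized inverses, and the equivalences above must be checked to survive these limiting cases.
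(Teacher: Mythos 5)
First, a structural point: the paper does not prove Theorem \ref{bounPotOrl} at all. It is imported verbatim from \cite{Cianchi1} and used downstream as a black box (in Lemma \ref{lemGultecnOrl} and Theorem \ref{thm4.4.}), so there is no internal proof to compare against; your text can only be measured against the argument in the cited source. That said, your architecture is the correct one and is essentially Cianchi's: rearrangement invariance of all three spaces, the O'Neil estimate $(I_\a f)^*(t)\lesssim t^{\a/n-1}\int_0^t f^*(s)\,ds+\int_t^\i s^{\a/n-1}f^*(s)\,ds$, and reduction to two weighted Hardy-type inequalities between one-dimensional Orlicz spaces whose optimal Young functions are exactly \eqref{chi2.13} and \eqref{chi2.3}. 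Your identification of the convergence integrals in \eqref{condstrpot} with $\mathcal{A}_{n/\a}(1)<\i$ and $\mathcal{B}_{n/\a}(1)<\i$, and of the weak case with controlling only the supremum $\sup_s (I_\a f)^*(s)/\Psi^{-1}(1/s)$, is also accurate.

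The genuine gap is that the load-bearing step is named but never carried out. The entire content of the theorem sits in the characterization of when $H_0g(t)=t^{\a/n-1}\int_0^t g$ and $H_1g(t)=\int_t^\i s^{\a/n-1}g(s)\,ds$ map $L_\Phi(0,\i)$ into $L_\Psi(0,\i)$ (resp.\ its weak version) on nonincreasing $g$; this is precisely where $\Phi_{n/\a}$ and $\Psi_{n/\a}$ come from, it is a separate nontrivial theorem (Cianchi's Hardy-inequality work), and you explicitly defer it as ``the main obstacle'' without supplying the extremal construction. The necessity half is also underpowered as sketched: testing on $\chi_{B(0,r)}$ and evaluating via Lemma \ref{lem4.0} yields only a one-parameter family of pointwise inequalities between $\Phi^{-1}$ and $\Psi^{-1}$, which is strictly weaker than the global domination of $\Psi$ by $\Phi_{n/\a}$ and of $\Psi_{n/\a}$ by $\Phi$; recovering the full conditions requires two-parameter (step-function) test inputs for which a matching lower bound for $I_\a$, not just O'Neil's upper bound, is established. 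As an explanation of where Theorem \ref{bounPotOrl} comes from, your outline is faithful; as a proof, it postpones exactly the part that needs proving.
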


\

\section{Generalized Orlicz-Morrey Spaces}

\begin{defn} (Orlicz-Morrey Space). For a Young function $\Phi$ and $0 \le \lambda \le n$,
we denote by $M_{\Phi,\lambda}(\Rn)$ the Orlicz-Morrey space, the space of all
functions $f\in L_{\Phi}^{\rm loc}(\Rn)$ with finite quasinorm
$$
  \left\| f\right\|_{M_{\Phi,\lambda}}= \sup_{x\in \Rn, \; r>0 }
   \Phi^{-1}\big(r^{-\lambda}\big) \|f\|_{L_{\Phi}(B(x,r))}.
$$
\end{defn}
Note that $M_{\Phi,0}=L_{\Phi}(\Rn)$ and if $\Phi(r)=r^{p},\,1\le p<\i$, then $M_{\Phi,\lambda}(\Rn)=M_{p,\lambda}(\Rn)$.

We also denote  by $WM_{\Phi,\lambda}(\Rn)$ the weak Orlicz-Morrey space of all functions $f\in WL_{\Phi}^{\rm loc}(\Rn) $ for which
$$
\left\| f\right\|_{WM_{\Phi,\lambda }} = \sup_{x\in \Rn, \; r>0 }
\Phi^{-1}\big(r^{-\lambda}\big) \|f\|_{WL_{\Phi}(B(x,r))} <\infty,
$$
where $WL_{\Phi}(B(x,r))$ denotes the weak $L_\Phi$-space of measurable functions
$f$ for which
\begin{align*}
\|f\|_{WL_{\Phi}(B(x,r))} & \equiv \|f \chi_{_{B(x,r)}}\|_{WL_{\Phi}(\Rn)}.
\end{align*}

\begin{defn} (generalized Orlicz-Morrey Space)
Let $\varphi(x,r)$ be a positive measurable function on $\Rn \times (0,\infty)$ and $\Phi$ any Young function.
We denote by $M_{\Phi,\varphi}(\Rn)$ the generalized Orlicz-Morrey space, the space of all
functions $f\in L_{\Phi}^{\rm loc}(\Rn)$ with finite quasinorm
$$
\|f\|_{M_{\Phi,\varphi}} = \sup\limits_{x\in\Rn, r>0}
\varphi(x,r)^{-1} \Phi^{-1}(r^{-n}) \|f\|_{L_{\Phi}(B(x,r))}.
$$
Also by $WM_{\Phi,\varphi}(\Rn)$ we denote the weak generalized Orlicz-Morrey space of all functions $f\in WL_{\Phi}^{\rm loc}(\Rn)$ for which
$$
\|f\|_{WM_{p,\varphi}} = \sup\limits_{x\in\Rn, r>0} \varphi(x,r)^{-1} \Phi^{-1}(r^{-n}) \|f\|_{WL_{\Phi}(B(x,r))} < \infty.
$$
\end{defn}

According to this definition, we recover the spaces $M_{\Phi,\lambda}$ and $WM_{\Phi,\lambda}$ under the choice $\varphi(x,r)=\frac{\Phi^{-1}\big(r^{-n}\big)}{\Phi^{-1}\big(r^{-\lambda}\big)}$:
\begin{align*}
M_{\Phi,\lambda} = M_{\Phi,\varphi}\Big|_{\varphi(x,r)=\frac{\Phi^{-1}\big(r^{-n}\big)}{\Phi^{-1}\big(r^{-\lambda}\big)}},
~~~~  WM_{\Phi,\lambda} = WM_{\Phi,\varphi}\Big|_{\frac{\Phi^{-1}\big(r^{-n}\big)}{\Phi^{-1}\big(r^{-\lambda}\big)}}.
\end{align*}

According to this definition, we recover the generalized Morrey spaces $M_{p,\varphi}$ and weak generalized Morrey spaces $WM_{p,\varphi}$ under the choice $\Phi(r)=r^{p},\,1\le p<\i$:
\begin{align*}
M_{p,\varphi}=
M_{\Phi,\varphi}\Big|_{\Phi(r)=r^{p}},
~~~~
WM_{p,\varphi}=WM_{\Phi,\varphi}\Big|_{\Phi(r)=r^{p}}.
\end{align*}

Sufficient conditions on $\varphi$ for the boundedness of $M_{\a}$ and $I_{\a}$ in generalized Morrey spaces $M_{p,\varphi}(\Rn)$ have been obtained in
\cite{BurGogGulMust1, GulDoc, GulBook, GulJIA, GulHasSam, GulShu, Miz, Nakai}.

\

\section{ Boundedness of the fractional maximal operator in the spaces $M_{\Phi,\varphi}(\Rn)$}

In this section sufficient conditions on the pairs $(\varphi_1, \varphi_2)$ and $(\Phi, \Psi)$ for the boundedness of $M_{\a}$ from one generalized Orlicz-Morrey spaces $M_{\Phi,\varphi_1}(\Rn)$ to another $M_{\Psi,\varphi_2}(\Rn)$ and from $M_{\Phi,\varphi_1}(\Rn)$ to the weak space $WM_{\Psi,\varphi_2}(\Rn)$  have been obtained.
At first we recall some supremal inequalities which we use at the proof of our main theorem.

Let $v$ be a weight. We denote by $L_{\infty,v}(0,\infty)$ the space of all functions $g(t)$, $t>0$ with finite norm
$$
\|g\|_{L_{\infty,v}(0,\infty)} = \sup _{t>0}v(t)|g(t)|
$$
and  $L_{\infty}(0,\infty) \equiv L_{\infty,1}(0,\infty)$.
Let ${\mathfrak M}(0,\infty)$ be the set of all \linebreak  Lebesgue-measurable
functions on $(0,\infty)$ and ${\mathfrak M}^+(0,\infty)$ its subset
 of all nonnegative functions on $(0,\infty)$. We denote by
${\mathfrak M}^+\!(0,\infty;\!\uparrow\!)\!$ the cone of all functions in
${\mathfrak M}^+(0,\infty)$ which are non-decreasing on $(0,\infty)$ and
$$
\mathcal{A}=\left\{\varphi \in {\mathfrak M}^+(0,\infty;\uparrow):
\lim_{t\rightarrow 0+}\varphi(t)=0\right\}.
$$
Let $u$ be a continuous and non-negative function on $(0,\infty)$. We
define the supremal operator $\overline{S}_{u}$ on $g\in {\mathfrak M}(0,\infty)$ by
$$
(\overline{S}_{u}g)(t): = \|u\, g\|_{L_{\infty}(t,\infty)},~~t\in (0,\infty).
$$
The following theorem was proved in \cite{BurGogGulMust1}.
\begin{thm}\label{thm5.1}
Let $v_1$, $v_2$ be non-negative measurable functions satisfying
$0<\|v_1\|_{L_{\infty}(t,\infty)}<\infty$ for any $t>0$ and let $u$ be a continuous non-negative function on $(0,\infty).$
Then the operator $\overline{S}_{u}$ is bounded from
$L_{\infty,v_1}(0,\infty)$ to $L_{\infty,v_2}(0,\infty)$ on the cone $\mathcal{A}$ if and only if
\begin{equation}\label{eq6.8}
\begin{split}
\left\|v_2 \overline{S}_{u}\left(  \| v_1 \|^{-1}_{L_{\infty}(\cdot,\infty)}\right)\right\|_{L_{\infty}(0,\infty)}<\infty.
\end{split}
\end{equation}
\end{thm}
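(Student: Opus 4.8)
The plan is to prove both implications and in fact to show that the operator norm of $\overline{S}_u$ restricted to the cone $\mathcal{A}$ coincides with the quantity appearing in \eqref{eq6.8}. Throughout I write $w(s):=\|v_1\|_{L_{\infty}(s,\infty)}$, which by hypothesis is finite, positive and non-increasing on $(0,\infty)$, so that $w^{-1}$ is non-decreasing, and I abbreviate the left-hand side of \eqref{eq6.8} as $C:=\|v_2\,\overline{S}_u(w^{-1})\|_{L_{\infty}(0,\infty)}$. The key idea is that membership in $\mathcal{A}$ forces every admissible $g$ to be dominated by a universal multiple of the envelope $w^{-1}$, which is exactly the function probed by the condition.

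For the sufficiency direction I would exploit monotonicity to turn the $L_{\infty,v_1}$-bound into a pointwise envelope. Fix $g\in\mathcal{A}$ and put $A:=\|g\|_{L_{\infty,v_1}}$, so that $v_1(\tau)g(\tau)\le A$ for every $\tau$. Since $g$ is non-decreasing, $g(s)\le g(\tau)\le A\,v_1(\tau)^{-1}$ for all $\tau\ge s$, and taking the infimum over such $\tau$ gives $g(s)\le A\big(\sup_{\tau\ge s}v_1(\tau)\big)^{-1}=A\,w(s)^{-1}$. Feeding this envelope into the definition of the supremal operator yields $\overline{S}_u g(t)\le A\,\overline{S}_u(w^{-1})(t)$; multiplying by $v_2(t)$ and taking the supremum over $t$ gives $\|\overline{S}_u g\|_{L_{\infty,v_2}}\le A\,C=\|g\|_{L_{\infty,v_1}}\,C$. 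Thus finiteness of $C$ in \eqref{eq6.8} already implies boundedness, with operator norm at most $C$.

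For the necessity direction the obstacle is that the natural extremiser $w^{-1}$ need not lie in $\mathcal{A}$: if $w(0+)<\infty$ then $w^{-1}(0+)>0$, so $w^{-1}\notin\mathcal{A}$. I would sidestep this by testing against the admissible step functions $g_{\sigma}:=w(\sigma)^{-1}\chi_{[\sigma,\infty)}$ for $\sigma>0$; each is non-negative, non-decreasing and vanishes near the origin, hence belongs to $\mathcal{A}$. A direct computation gives $\|g_\sigma\|_{L_{\infty,v_1}}=w(\sigma)^{-1}\sup_{s\ge\sigma}v_1(s)=1$, while for any $t<\sigma$ one has $\overline{S}_u g_\sigma(t)=w(\sigma)^{-1}\|u\|_{L_{\infty}(\sigma,\infty)}\ge u(\sigma)/w(\sigma)$, where the last inequality uses the continuity of $u$. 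Assuming $\overline{S}_u$ bounded with norm $N$, the chain $v_2(t)\,u(\sigma)/w(\sigma)\le\|\overline{S}_u g_\sigma\|_{L_{\infty,v_2}}\le N$ holds for every $\sigma>t$; taking the supremum first over $\sigma>t$ and then over $t$ recovers $C\le N<\infty$, which is precisely \eqref{eq6.8}.

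The steps that demand the most care are measure-theoretic rather than conceptual, and I regard the necessity argument as the genuine obstacle: one must reconcile the honest supremum defining the $L_{\infty,v}$-norms with the essential supremum defining $w$ and $\overline{S}_u$, justify the identity $\sup_{\tau\ge s}v_1(\tau)=w(s)$ used to build the envelope, and correctly evaluate $\|g_\sigma\|_{L_{\infty,v_1}}$ across the jump at $\sigma$. These points are resolved by passing to suitable representatives and invoking the continuity of $u$; once they are settled, the two inequalities $\|\overline{S}_u\|\le C$ and $C\le\|\overline{S}_u\|$ combine to the sharp identity between the operator norm on $\mathcal{A}$ and $C$, of which the stated equivalence is an immediate consequence.
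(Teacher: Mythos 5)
The paper does not actually prove this statement: Theorem \ref{thm5.1} is quoted from \cite{BurGogGulMust1} with no argument given, so there is nothing internal to compare against. Your proof is correct and is essentially the standard argument for such supremal-operator characterizations: sufficiency via the monotone envelope $g(s)\le \|g\|_{L_{\infty,v_1}}\,\|v_1\|^{-1}_{L_{\infty}(s,\infty)}$ (which uses only that $g$ is non-negative and non-decreasing, and that $\|v_1\|_{L_{\infty}(s,\infty)}>0$), and necessity by testing against the step functions supported on half-lines, which do lie in $\mathcal{A}$ and yield the sharp identity between the operator norm on the cone and the constant in \eqref{eq6.8}.

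One small point to tidy: since all the norms here are genuine suprema, with $g_{\sigma}=\|v_1\|^{-1}_{L_{\infty}(\sigma,\infty)}\chi_{[\sigma,\infty)}$ you only get $\|g_{\sigma}\|_{L_{\infty,v_1}}=\|v_1\|^{-1}_{L_{\infty}(\sigma,\infty)}\sup_{s\ge\sigma}v_1(s)$, which can exceed $1$ (or even be infinite) if $v_1(\sigma)>\sup_{s>\sigma}v_1(s)$; the clean fix is to take $g_{\sigma}=\|v_1\|^{-1}_{L_{\infty}(\sigma,\infty)}\chi_{(\sigma,\infty)}$, which still belongs to $\mathcal{A}$, has norm exactly $1$, and gives $\overline{S}_u g_{\sigma}(t)\ge u(\sigma)\|v_1\|^{-1}_{L_{\infty}(\sigma,\infty)}$ for $t<\sigma$ by the continuity of $u$, exactly as you use it. You flag this endpoint issue yourself, and it does not affect the validity of the argument.
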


For the Riesz potential the following local estimate was proved in \cite{GulDerJFSA}.
\begin{lem}\label{lemGultecnOrl}
Let $0<\a<n$, $\Phi$ and $\Psi$ Young functions, $f\in L_{\Phi}^{\rm loc}(\Rn)$ and $B=B(x_0,r)$. If $(\Phi, \Psi)$ satisfy the conditions \eqref{condstrpot}, then
\begin{equation}\label{strGultecnOrl}
\|I_\alpha f\|_{L_{\Psi}(B)} \lesssim \frac{1}{\Psi^{-1}\big(r^{-n}\big)}
 \int_{2r}^{\i} \|f\|_{L_{\Phi}(B(x_0,t))} \Psi^{-1}\big(t^{-n}\big) \frac{dt}{t}.
\end{equation}
If $(\Phi, \Psi)$ satisfy the conditions \eqref{condweakpot}, then
\begin{equation}\label{weakGultecnOrl}
\|I_\alpha f\|_{WL_{\Psi}(B)} \lesssim \frac{1}{\Psi^{-1}\big(r^{-n}\big)}
 \int_{2r}^{\i} \|f\|_{L_{\Phi}(B(x_0,t))} \Psi^{-1}\big(t^{-n}\big) \frac{dt}{t}.
\end{equation}
\end{lem}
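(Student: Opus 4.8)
The plan is to prove both inequalities by the familiar local decomposition of $f$ relative to the doubled ball $2B=B(x_0,2r)$, splitting the kernel into a near and a far part and controlling each by a different mechanism. Writing $f=f_1+f_2$ with $f_1=f\chi_{2B}$ and $f_2=f\chi_{\Rn\setminus 2B}$, sublinearity of $I_\a$ gives
$$\|I_\a f\|_{L_\Psi(B)}\le\|I_\a f_1\|_{L_\Psi(B)}+\|I_\a f_2\|_{L_\Psi(B)},$$
and I treat the two summands separately, the first by a global boundedness result and the second by an explicit pointwise estimate.

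For the near term I would invoke Cianchi's global theorem: since $(\Phi,\Psi)$ satisfies \eqref{condstrpot}, Theorem \ref{bounPotOrl}(ii) gives $\|I_\a f_1\|_{L_\Psi(B)}\le\|I_\a f_1\|_{L_\Psi(\Rn)}\lesssim\|f_1\|_{L_\Phi(\Rn)}=\|f\|_{L_\Phi(2B)}$. It then remains only to dominate $\|f\|_{L_\Phi(2B)}$ by the right-hand side of \eqref{strGultecnOrl}. Because $2B\subset B(x_0,t)$ for every $t\ge 2r$, monotonicity of the local Orlicz norm yields $\|f\|_{L_\Phi(2B)}\le\|f\|_{L_\Phi(B(x_0,t))}$, so after multiplying by $\Psi^{-1}(t^{-n})\tfrac{dt}{t}$ and integrating over $(2r,\i)$ the matter reduces to the elementary lower bound $\Psi^{-1}(r^{-n})\lesssim\int_{2r}^{\i}\Psi^{-1}(t^{-n})\tfrac{dt}{t}$, which follows from the concavity of $\Psi^{-1}$ (the inverse of the convex function $\Psi$) by estimating over the single dyadic shell $(2r,4r)$.

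The far term is where the structure of the statement is produced. For $x\in B$ and $y\in\Rn\setminus 2B$ one has $|x-y|\approx|x_0-y|$, so $|I_\a f_2(x)|\lesssim\int_{\Rn\setminus 2B}\frac{|f(y)|}{|x_0-y|^{n-\a}}\,dy$. Writing $|x_0-y|^{\a-n}=(n-\a)\int_{|x_0-y|}^{\i}t^{\a-n-1}\,dt$ and applying Fubini recasts this as $\int_{2r}^{\i}\big(\int_{B(x_0,t)}|f(y)|\,dy\big)t^{\a-n}\tfrac{dt}{t}$, and Lemma \ref{lemHold} then gives $\int_{B(x_0,t)}|f|\lesssim t^{n}\Phi^{-1}(t^{-n})\|f\|_{L_\Phi(B(x_0,t))}$. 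Hence $|I_\a f_2(x)|$ is bounded, uniformly in $x\in B$, by $\int_{2r}^{\i}t^{\a}\Phi^{-1}(t^{-n})\|f\|_{L_\Phi(B(x_0,t))}\tfrac{dt}{t}$; being constant on $B$, Lemma \ref{lem4.0} converts this into $\|I_\a f_2\|_{L_\Psi(B)}\lesssim\frac{1}{\Psi^{-1}(r^{-n})}\int_{2r}^{\i}t^{\a}\Phi^{-1}(t^{-n})\|f\|_{L_\Phi(B(x_0,t))}\tfrac{dt}{t}$.

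The remaining, and in my view principal, obstacle is to replace $t^{\a}\Phi^{-1}(t^{-n})$ by $\Psi^{-1}(t^{-n})$ inside this integral; equivalently, to establish the pointwise comparison $s^{-\a/n}\Phi^{-1}(s)\lesssim\Psi^{-1}(s)$ with $s=t^{-n}$. I would split this into two steps: the hypothesis that $\Phi_{n/\a}$ dominates $\Psi$ globally inverts at once to $\Phi_{n/\a}^{-1}(s)\lesssim\Psi^{-1}(s)$, after which it suffices to prove the genuinely Orlicz-specific comparison $s^{-\a/n}\Phi^{-1}(s)\lesssim\Phi_{n/\a}^{-1}(s)$, expressing that $\Phi_{n/\a}$ is the Sobolev-conjugate Young function of $\Phi$; this is the one computation that must be drawn out by unwinding the double-integral definition \eqref{chi2.13}, and in the model case $\Phi(r)=r^{p}$ it collapses to $s^{-\a/n}s^{1/p}=s^{1/q}$ with $1/q=1/p-\a/n$, a useful consistency check. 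Granting this, the far-term bound is exactly the right-hand side of \eqref{strGultecnOrl}, finishing the strong estimate. For the weak estimate \eqref{weakGultecnOrl} the far term needs no change, since $\|g\|_{WL_\Psi(B)}\le\|g\|_{L_\Psi(B)}$ and, by Lemma \ref{lem4.0}, $\chi_B$ has identical weak and strong Orlicz norms; only the near term is affected, where Theorem \ref{bounPotOrl}(ii) is replaced by the weak-type statement Theorem \ref{bounPotOrl}(i), now applicable because $(\Phi,\Psi)$ satisfies \eqref{condweakpot}.
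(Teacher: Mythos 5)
Note first that the paper does not actually prove this lemma here: it is imported verbatim from \cite{GulDerJFSA}, so the only internal point of comparison is the paper's proof of the analogous statements for $M_\a$ (Lemmas \ref{lem4.1} and \ref{lem4.2.}). Your architecture coincides with that standard argument: the splitting $f=f_1+f_2$ about $B(x_0,2r)$, the near part handled by the global boundedness in Theorem \ref{bounPotOrl} and then absorbed into the right-hand side via $\|f\|_{L_{\Phi}(2B)}\le\|f\|_{L_{\Phi}(B(x_0,t))}$ together with $\int_{2r}^{4r}\Psi^{-1}(t^{-n})\frac{dt}{t}\gtrsim\Psi^{-1}(r^{-n})$ (using $\Psi^{-1}(\lambda s)\le\lambda\Psi^{-1}(s)$ for $\lambda\ge1$), and the far part reduced by $|x-y|\approx|x_0-y|$, the layer-cake identity and Fubini, Lemma \ref{lemHold}, and Lemma \ref{lem4.0} to
$\frac{1}{\Psi^{-1}(r^{-n})}\int_{2r}^{\i}t^{\a}\Phi^{-1}(t^{-n})\|f\|_{L_{\Phi}(B(x_0,t))}\frac{dt}{t}$. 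All of these steps are correct.

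The gap is exactly where you flag it, and it is not closed: the entire Orlicz-specific content of the lemma is the pointwise comparison $t^{\a}\Phi^{-1}(t^{-n})\lesssim\Psi^{-1}(t^{-n})$, and you reduce it to $s^{-\a/n}\Phi^{-1}(s)\lesssim\Phi_{n/\a}^{-1}(s)$ with the verification deferred to an unexecuted ``unwinding'' of \eqref{chi2.13}. That computation is genuinely delicate --- it passes through $\widetilde{\Phi}$, $\mathcal{A}_{n/\a}$ and its generalized inverse, and already needs the integrability hypothesis $\int_0^1\widetilde{\Phi}(t)/t^{1+n/(n-\a)}dt<\i$ for $\Phi_{n/\a}$ to be well defined --- and a consistency check for $\Phi(r)=r^p$ is not a proof. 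A much shorter route, and the one the paper itself uses in the proof of Lemma \ref{lem4.2.}, avoids \eqref{chi2.13} entirely: conditions \eqref{condweakpot} (hence also \eqref{condstrpot}) give the weak type $(\Phi,\Psi)$ of $I_\a$, hence of $M_\a$ since $M_\a f\lesssim I_\a(|f|)$; by the ``only if'' part of Theorem \ref{bounFrMaxOrl}(i) this forces \eqref{condweakFrM}, i.e.\ $\Phi$ dominates $Q$ with $Q^{-1}(r)=r^{\a/n}\Psi^{-1}(r)$, and inverting the domination yields precisely $\Phi^{-1}(s)\lesssim s^{\a/n}\Psi^{-1}(s)$. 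You should either carry out your direct computation in full or substitute this implication chain; as written, the crucial inequality is asserted rather than proved.
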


For the fractional maximal operator the following local estimate is valid.
\begin{lem}\label{lem4.1}
Let $\Phi$ and $\Psi$ Young functions, $f\in L_{\Phi}^{\rm loc}(\Rn)$ and $B=B(x,r)$.

If $(\Phi, \Psi)$ satisfy the conditions \eqref{condweakFrM}, then
\begin{align}\label{eq100WZ}
\|M_{\a} f\|_{WL_{\Psi}(B)} \lesssim \|f\|_{L_{\Phi}(B(x,2r))} + \frac{1}{\Psi^{-1}\big(r^{-n}\big)} \, \sup_{t>2r} t^{-n+\a} \|f\|_{L_1(B(x,t))}.
\end{align}

If $(\Phi, \Psi)$ satisfy the conditions \eqref{condstrFrM}, then
\begin{align}\label{eq100}
\|M_{\a} f\|_{L_{\Psi}(B)} \lesssim \|f\|_{L_{\Phi}(B(x,2r))} + \frac{1}{\Psi^{-1}\big(r^{-n}\big)} \, \sup_{t>2r} t^{-n+\a} \|f\|_{L_1(B(x,t))}.
\end{align}
\end{lem}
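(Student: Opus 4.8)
The plan is to run the standard Guliyev-type local decomposition. I would fix the ball $B=B(x,r)$ and split $f=f_1+f_2$ with $f_1=f\chi_{B(x,2r)}$ and $f_2=f\chi_{\Rn\setminus B(x,2r)}$, so that by the subadditivity of $M_{\a}$ one has the pointwise bound $M_{\a}f\le M_{\a}f_1+M_{\a}f_2$. Taking the $L_{\Psi}(B)$ (resp.\ $WL_{\Psi}(B)$) quasinorm and using the (quasi-)triangle inequality reduces each of the two displayed estimates to bounding the two pieces separately.

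For the local piece $f_1$ I would simply invoke Cianchi's Orlicz-space bounds from Theorem~\ref{bounFrMaxOrl}. Under \eqref{condstrFrM}, part (ii) gives the strong bound $M_{\a}\colon L_{\Phi}\to L_{\Psi}$, whence
$$
\|M_{\a}f_1\|_{L_{\Psi}(B)}\le\|M_{\a}f_1\|_{L_{\Psi}(\Rn)}\lesssim\|f_1\|_{L_{\Phi}(\Rn)}=\|f\|_{L_{\Phi}(B(x,2r))},
$$
and under \eqref{condweakFrM}, part (i) gives the analogous weak bound with $WL_{\Psi}$ in place of $L_{\Psi}$. This already produces the first term on the right-hand side of both \eqref{eq100} and \eqref{eq100WZ}.

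The substance of the argument is the far piece $f_2$, which I would estimate pointwise and then convert into the supremal term. For $y\in B(x,r)$, a ball $B(y,t)$ can meet $\operatorname{supp}f_2\subset\Rn\setminus B(x,2r)$ only if $t>r$: indeed any $z$ with $|z-y|<t$ and $|z-x|\ge 2r$ forces $2r\le|z-x|\le|z-y|+|y-x|<t+r$. For such $t$ one has the inclusion $B(y,t)\subset B(x,t+r)\subset B(x,2t)$, so that, using $|B(y,t)|^{\a/n-1}\approx t^{\a-n}$,
$$
M_{\a}f_2(y)\lesssim\sup_{t>r}t^{\a-n}\|f\|_{L_1(B(x,2t))}\approx\sup_{t>2r}t^{\a-n}\|f\|_{L_1(B(x,t))},
$$
the last step being the substitution $t\mapsto 2t$. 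Since the right-hand side is a constant $C_{x,r}$ independent of $y$, we get $M_{\a}f_2\le C_{x,r}\chi_{B}$ on $B$.

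It then remains to take norms of this constant. By Lemma~\ref{lem4.0}, $\|\chi_{B}\|_{WL_{\Psi}}=\|\chi_{B}\|_{L_{\Psi}}=\Psi^{-1}(|B|^{-1})^{-1}$, and since $|B|=v_n r^n$, the scaling properties of $\Psi^{-1}$ give $\Psi^{-1}(|B|^{-1})\approx\Psi^{-1}(r^{-n})$ up to the dimensional constant $v_n$; by monotonicity and homogeneity of the (weak) Orlicz norm both $\|M_{\a}f_2\|_{L_{\Psi}(B)}$ and $\|M_{\a}f_2\|_{WL_{\Psi}(B)}$ are then dominated by $\frac{1}{\Psi^{-1}(r^{-n})}\sup_{t>2r}t^{\a-n}\|f\|_{L_1(B(x,t))}$. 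Combining this with the $f_1$ estimate yields \eqref{eq100} and \eqref{eq100WZ}. I expect the only genuinely delicate points to be the geometric lower bound $t>r$ together with the inclusion $B(y,t)\subset B(x,2t)$, and the bookkeeping that turns the uniform bound on $f_2$ into the factor $\Psi^{-1}(r^{-n})^{-1}$ via Lemma~\ref{lem4.0}; everything else is routine.
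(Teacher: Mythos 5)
Your proposal is correct and follows essentially the same route as the paper: the same splitting $f=f_1+f_2$ at the doubled ball, Cianchi's Theorem~\ref{bounFrMaxOrl} for the local piece, the same geometric observations ($t>r$ and $B(y,t)\cap\dual(B(x,2r))\subset B(x,2t)$) yielding the pointwise bound on $M_{\a}f_2$, and Lemma~\ref{lem4.0} to convert the resulting constant into the factor $\Psi^{-1}(r^{-n})^{-1}$. No gaps.
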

\begin{proof}
Let $(\Phi, \Psi)$ satisfy the conditions \eqref{condstrFrM}. We put $f=f_1+f_2$, where $f_1=f\chi_{B(x,2r)}$ and $f_2=f\chi_{{\dual}B(x,2r)}$. Then we get
\begin{gather*}
\|M_{\a} f\|_{L_{\Psi}(B)} \leq \|M_{\a} f_1\|_{L_{\Psi}(B)}+ \|M_{\a} f_2\|_{L_{\Psi}(B)}.
\end{gather*}
By the boundedness of the operator $M_{\a}$ from  $L_{\Phi}(\Rn)$ to $L_{\Psi}(\Rn)$ (see Theorem \ref{bounFrMaxOrl}) we have
$$
\|M_{\a} f_1\|_{L_{\Psi}(B)}\lesssim \|f\|_{L_{\Phi}(B(x,2r))}.
$$
Let $y$ be an arbitrary point from $B$. If $B(y,t)\cap {\dual}(B(x,2r))\neq\emptyset,$ then $t>r$. Indeed, if $z\in B(y,t)\cap  {\dual} (B(x,2r)),$
then $t > |y-z| \geq |x-z|-|x-y|>2r-r=r$.

On the other hand, $B(y,t)\cap {\dual} (B(x,2r))\subset B(x,2t)$. Indeed, if  $z\in B(y,t)\cap {\dual} (B(x,2r))$, then
we get $|x-z|\leq |y-z|+|x-y|<t+r<2t$.

Hence
\begin{equation*}
\begin{split}
M_{\a} f_2(y) & = \sup_{t>0}\frac{1}{|B(y,t)|^{1-\frac{\a}{n}}} \int_{B(y,t)\cap {{\dual}(B(x,2r))}}|f(z)|d z
\\
& \le 2^{n-\a} \, \sup_{t>r}\frac{1}{|B(x,2t)|^{1-\frac{\a}{n}}} \int_{B(x,2t)}|f(z)|d z
\\
&= 2^{n-\a} \, \sup_{t>2r} \frac{1}{|B(x,t)|^{1-\frac{\a}{n}}} \int_{B(x,t)}|f(z)| d z.
\end{split}
\end{equation*}
Therefore, for all $y \in B$ we have
\begin{equation}\label{ves1}
M_{\a} f_2(y) \le 2^{n-\a} \,\sup_{t>2r} \frac{1}{|B(x,t)|^{1-\frac{\a}{n}}} \int_{B(x,t)} |f(z)|d z.
\end{equation}

Thus
\begin{gather*}
\|M_{\a} f\|_{L_{\Psi}(B)} \lesssim \|f\|_{L_{\Phi}(B(x,2r))} + \frac{1}{\Psi^{-1}\big(r^{-n}\big)} \,
\left(\sup_{t>2r}\frac{1}{|B(x,t)|^{1-\frac{\a}{n}}}\int_{B(x,t)}|f(z)|d z\right).
\end{gather*}

Let now $\Phi$ dominates globally the function $Q$. It is obvious that
\begin{gather*}
\|M_{\a} f\|_{WL_{\Psi}(B)} \lesssim \|M_{\a} f_1\|_{WL_{\Psi}(B)}+ \|M_{\a} f_2\|_{WL_{\Psi}(B)}
\end{gather*}
for every ball $B=B(x,r).$

By the boundedness of the operator $M_{\a}$ from $L_{\Phi}(\Rn)$ to $WL_{\Psi}(\Rn)$, provided by Theorem
\ref{bounFrMaxOrl},  we have
$$
\|M_{\a} f_1\|_{WL_{\Psi}(B)} \lesssim \|f\|_{L_{\Phi}(B(x,2r))}.
$$
Then by \eqref{ves1} we get the inequality \eqref{eq100WZ}.

\end{proof}

\begin{lem}\label{lem4.2.}
Let $\Phi$ and $\Psi$ Young functions, $f\in L_{\Phi}^{\rm loc}(\Rn)$ and $B=B(x,r)$.

If $(\Phi, \Psi)$ satisfy the conditions \eqref{condweakFrM}, then
\begin{align}\label{eq100WX}
\|M_{\a} f\|_{WL_{\Psi}(B)} \lesssim \frac{1}{\Psi^{-1}\big(r^{-n}\big)} \, \sup_{t>2r} \Psi^{-1}\big(t^{-n}\big) \, \|f\|_{L_{\Phi}(B(x,t))}.
\end{align}

If $(\Phi, \Psi)$ satisfy the conditions \eqref{condstrFrM}, then
\begin{equation}\label{4.5}
\|M_{\a} f\|_{L_{\Psi}(B)} \lesssim  \frac{1}{\Psi^{-1}\big(r^{-n}\big)} \, \sup_{t>2r} \Psi^{-1}\big(t^{-n}\big) \, \|f\|_{L_{\Phi}(B(x,t))}.
\end{equation}
\end{lem}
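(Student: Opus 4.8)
The plan is to derive both inequalities of Lemma \ref{lem4.2.} directly from the local estimates of Lemma \ref{lem4.1}, by rewriting the right-hand sides of \eqref{eq100WZ} and \eqref{eq100} entirely in terms of $L_\Phi$-norms. Since strong boundedness of $M_\a$ from $L_\Phi(\Rn)$ to $L_\Psi(\Rn)$ forces weak boundedness from $L_\Phi(\Rn)$ to $WL_\Psi(\Rn)$, condition \eqref{condstrFrM} implies \eqref{condweakFrM}; hence in both parts we may assume that $\Phi$ dominates globally the function $Q$ with $Q^{-1}(s)=s^{\a/n}\Psi^{-1}(s)$. This domination translates into the pointwise bound $\Phi^{-1}(s)\lesssim s^{\a/n}\Psi^{-1}(s)$ for all $s\ge0$, i.e., putting $s=t^{-n}$,
\begin{equation*}
t^{\a}\,\Phi^{-1}(t^{-n})\lesssim \Psi^{-1}(t^{-n}),\qquad t>0,
\end{equation*}
which is the key inequality used throughout.

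Next I would control the supremal term. By Lemma \ref{lemHold} applied to $B(x,t)$, together with $|B(x,t)|=v_n t^n$ and the concavity (hence doubling) of $\Phi^{-1}$, one has
\begin{equation*}
\|f\|_{L_1(B(x,t))}\lesssim t^{n}\,\Phi^{-1}(t^{-n})\,\|f\|_{L_{\Phi}(B(x,t))}.
\end{equation*}
Multiplying by $t^{-n+\a}$ and inserting the key inequality gives, for every $t>2r$,
\begin{equation*}
t^{-n+\a}\|f\|_{L_1(B(x,t))}\lesssim t^{\a}\Phi^{-1}(t^{-n})\|f\|_{L_{\Phi}(B(x,t))}\lesssim \Psi^{-1}(t^{-n})\|f\|_{L_{\Phi}(B(x,t))},
\end{equation*}
so that the supremal summand in \eqref{eq100WZ} and \eqref{eq100} is already dominated by $\frac{1}{\Psi^{-1}(r^{-n})}\sup_{t>2r}\Psi^{-1}(t^{-n})\|f\|_{L_{\Phi}(B(x,t))}$.

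It then remains to absorb the first summand $\|f\|_{L_{\Phi}(B(x,2r))}$ into the same quantity. Since $t\mapsto\|f\|_{L_{\Phi}(B(x,t))}$ is non-decreasing, letting $t\to 2r^{+}$ shows that the supremum on the right majorizes $\Psi^{-1}((2r)^{-n})\|f\|_{L_{\Phi}(B(x,2r))}$, whence
\begin{equation*}
\|f\|_{L_{\Phi}(B(x,2r))}\le \frac{1}{\Psi^{-1}((2r)^{-n})}\sup_{t>2r}\Psi^{-1}(t^{-n})\|f\|_{L_{\Phi}(B(x,t))}.
\end{equation*}
Finally, the concavity of $\Psi^{-1}$ yields the doubling estimate $\Psi^{-1}(r^{-n})=\Psi^{-1}\big(2^{n}(2r)^{-n}\big)\le 2^{n}\Psi^{-1}((2r)^{-n})$, so that $\frac{1}{\Psi^{-1}((2r)^{-n})}\lesssim\frac{1}{\Psi^{-1}(r^{-n})}$, and the first summand is likewise bounded by $\frac{1}{\Psi^{-1}(r^{-n})}\sup_{t>2r}\Psi^{-1}(t^{-n})\|f\|_{L_{\Phi}(B(x,t))}$. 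Combining the two estimates proves \eqref{eq100WX} and \eqref{4.5} simultaneously, the weak case starting from \eqref{eq100WZ} and the strong case from \eqref{eq100}.

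The step I expect to be the main obstacle is the correct passage from the global domination hypothesis to the pointwise inverse inequality $t^{\a}\Phi^{-1}(t^{-n})\lesssim\Psi^{-1}(t^{-n})$ — in particular, checking that the strong-type condition \eqref{condstrFrM} really does guarantee the weak-type domination needed here — together with the bookkeeping of the dimensional constant $v_n$ and of the doubling factors, both of which rely on the concavity of $\Phi^{-1}$ and $\Psi^{-1}$. Everything else is a routine substitution into Lemma \ref{lem4.1}.
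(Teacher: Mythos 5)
Your proposal is correct and follows essentially the same route as the paper: both start from Lemma \ref{lem4.1}, use Lemma \ref{lemHold} to convert the $L_1$-average into an $L_\Phi$-norm, pass from the global domination of $Q$ by $\Phi$ (noting that \eqref{condstrFrM} implies \eqref{condweakFrM} via Theorem \ref{bounFrMaxOrl}) to the pointwise inequality $\Phi^{-1}(t)\lesssim t^{\a/n}\Psi^{-1}(t)$, and absorb the term $\|f\|_{L_{\Phi}(B(x,2r))}$ using the monotonicity of $t\mapsto\|f\|_{L_{\Phi}(B(x,t))}$ together with $\Psi^{-1}(r^{-n})\thickapprox\Psi^{-1}((2r)^{-n})$. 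No gaps.
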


\begin{proof}
Suppose that the condition \eqref{condstrFrM} satisfied. Denote
\begin{equation*}
\begin{split}
\mathcal{M}_1:&=\frac{1}{\Psi^{-1}\big(r^{-n}\big)} \, \left(\sup_{t>2r}\frac{1}{|B(x,t)|^{1-\frac{\a}{n}}} \int_{B(x,t)}|f(z)|d z\right),
\\
\mathcal{M}_2:&=\|f\|_{L_{\Phi}(B(x,2r))}.
\end{split}
\end{equation*}
By Lemma \ref{lemHold}, we get
$$
\mathcal{M}_1 \lesssim \frac{1}{\Psi^{-1}\big(r^{-n}\big)} \,  \sup_{t>2r} t^{\a} \Phi^{-1}\big(t^{-n}\big) \|f\|_{L_{\Phi}(B(x,t))}.
$$
On the other hand, the conditions \eqref{condstrFrM} implies the condition \eqref{condweakFrM}. Since from Theorem \ref{bounFrMaxOrl}
$$\eqref{condstrFrM}\Rightarrow M_{\a} \text{  strong type  } (\Phi, \Psi) \Rightarrow M_{\a} \text{  weak type  } (\Phi, \Psi) \Rightarrow \eqref{condweakFrM}.$$
The condition \eqref{condweakFrM} is equivalent the condition $\Phi^{-1}(t)\lesssim t^{\frac{\a}{n}}\Psi^{-1}(t)$. Indeed,
\begin{eqnarray*}
{Q}^{-1}(t)&=&\inf\{r\geq0: {Q}(r)>t\}\\
{}&\geq&\inf\{r\geq0: \Phi(Cr)>t\}\\
{}&=&\frac{1}{C}\inf\{Cr\geq0: \Phi(Cr)>t\}\\
{}&=&\frac{1}{C}\Phi^{-1}(t).
\end{eqnarray*}
So we arrive
$$
\mathcal{M}_1 \lesssim \frac{1}{\Psi^{-1}\big(r^{-n}\big)} \,  \sup_{t>2r} \Psi^{-1}\big(t^{-n}\big) \|f\|_{L_{\Phi}(B(x,t))}.
$$
On the other hand
$$
\hskip-3cm\frac{1}{\Psi^{-1}\big(r^{-n}\big)} \,  \sup_{t>2r} \Psi^{-1}\big(t^{-n}\big) \|f\|_{L_{\Phi}(B(x,t))}
$$
\begin{equation}\label{ves2F}
\hskip+1cm\gtrsim \frac{1}{\Psi^{-1}\big(r^{-n}\big)} \,  \sup_{t>2r} \Psi^{-1}\big(t^{-n}\big) \, \|f\|_{L_{\Phi}(B(x,2r))}\thickapprox \mathcal{M}_2.
\end{equation}
Since $
\|M_{\a} f\|_{L_{\Psi}(B)}\leq \mathcal{M}_1+\mathcal{M}_2
$ by Lemma \ref{lem4.1},
we arrive at \eqref{4.5}.

Suppose that the condition \eqref{condweakFrM} satisfied. The inequality \eqref{eq100WX} directly follows from \eqref{eq100WZ}.
\end{proof}
If we take $\Phi(t)=t^{p},\,\Psi(t)=t^{q},\,1\le p,q<\i$ at Lemma \ref{lem4.2.} we get following estimates which was proved at \cite{GulShu}.
\begin{cor}
Let $f\in L_{p}^{\rm loc}(\Rn)$,  $0\le \a < n$,
$1\le p<\infty$, $1/p-1/q=\a/n$. Then
 $$
\|M_{\a} f\|_{L_{q}(B(x_0,r))}\lesssim r^{\frac{n}{q}} \sup_{t>2r}  t^{-\frac{n}{p}}\|f\|_{L_{p}(B(x_0,t))}, ~~~ 1<p\le q<\infty
$$
and
$$
\|M_{\a} f\|_{WL_{q}(B(x_0,r))}\lesssim r^{\frac{n}{q}} \sup_{t>2r} t^{-\frac{n}{q}}\|f\|_{L_{1}(B(x_0,t))}, ~~~ 1\le q<\infty.
$$\end{cor}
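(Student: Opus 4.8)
The plan is to read off both estimates from Lemma~\ref{lem4.2.} by the power-function specialization $\Phi(t)=t^{p}$, $\Psi(t)=t^{q}$, so that the work reduces to bookkeeping with a few elementary identities. First I would record that for power Young functions $\Phi^{-1}(s)=s^{1/p}$, $\Psi^{-1}(s)=s^{1/q}$, and $\|f\|_{L_{\Phi}(B)}=\|f\|_{L_{p}(B)}$, $\|f\|_{L_{\Psi}(B)}=\|f\|_{L_{q}(B)}$; in particular $1/\Psi^{-1}(r^{-n})=r^{n/q}$ and $\Psi^{-1}(t^{-n})=t^{-n/q}$. The key structural observation is that in both inequalities of Lemma~\ref{lem4.2.} the supremal weight is generated solely by $\Psi$ (hence by the exponent $q$), while the inner Lebesgue norm is generated by $\Phi$; this is what distinguishes the two asserted estimates.

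For the strong-type estimate I would take $1<p\le q<\infty$ with $1/p-1/q=\alpha/n$ and verify that $(\Phi,\Psi)=(t^{p},t^{q})$ satisfies \eqref{condstrFrM}. Rather than evaluate $\Psi_{n/\alpha}$ from \eqref{chi2.3} by hand, I would invoke the equivalence in Theorem~\ref{bounFrMaxOrl}(ii): $M_{\alpha}$ is classically of strong type $(p,q)$ for these exponents, so \eqref{condstrFrM} must hold for this pair. (Its integrability clause is $\int_{0}^{1}t^{q-1-n/(n-\alpha)}\,dt<\infty$, i.e.\ $q>n/(n-\alpha)$, which holds precisely because $p>1$.) Then \eqref{4.5} of Lemma~\ref{lem4.2.}, after the substitutions above, gives
\[
\|M_{\alpha} f\|_{L_{q}(B(x_0,r))}\lesssim r^{n/q}\sup_{t>2r}t^{-n/q}\|f\|_{L_{p}(B(x_0,t))},
\]
the supremal weight being $t^{-n/q}=\Psi^{-1}(t^{-n})$.

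For the weak-type estimate I would instead put $p=1$, so $\Phi(t)=t$ and $\Psi(t)=t^{q}$ with $1/q=1-\alpha/n$. Here \eqref{condstrFrM} fails, since its integrability clause becomes $\int_{0}^{1}t^{-1}\,dt=\infty$, so I use the weak branch \eqref{eq100WX}. Its hypothesis \eqref{condweakFrM} is immediate: $Q^{-1}(r)=r^{\alpha/n}\Psi^{-1}(r)=r^{\alpha/n+1/q}=r$, so $Q(r)=r=\Phi(r)$ and $\Phi$ dominates $Q$ globally (this is just the classical weak type $(1,q)$ of $M_{\alpha}$ seen through Theorem~\ref{bounFrMaxOrl}(i)). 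Substituting into \eqref{eq100WX} and using $\|f\|_{L_{\Phi}(B)}=\|f\|_{L_{1}(B)}$ yields
\[
\|M_{\alpha} f\|_{WL_{q}(B(x_0,r))}\lesssim r^{n/q}\sup_{t>2r}t^{-n/q}\|f\|_{L_{1}(B(x_0,t))}.
\]

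The only genuinely technical point---and the step I expect to be the main obstacle if one wants a self-contained argument---is verifying the Orlicz conditions \eqref{condstrFrM}/\eqref{condweakFrM} for power functions straight from the formulas \eqref{chi2.3}--\eqref{chi2.13} for $\Psi_{n/\alpha}$ and $\Phi_{n/\alpha}$; passing instead through Theorem~\ref{bounFrMaxOrl} and the classical $L_{p}\to L_{q}$ bounds sidesteps it entirely. I would also flag that in the strong-type line the supremal weight produced by this scheme is $t^{-n/q}$ rather than $t^{-n/p}$: both estimates inherit the same $\Psi$-generated weight $\Psi^{-1}(t^{-n})=t^{-n/q}$ and differ only in the inner exponent ($L_{p}$ versus $L_{1}$), which comes from $\Phi$. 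For $\alpha>0$ this is a genuine distinction, as the scaling test $f=\chi_{B(0,R)}$ with $R\to\infty$ confirms, so I would read the $t^{-n/p}$ in the displayed first inequality as a misprint for $t^{-n/q}$.
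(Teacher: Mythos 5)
Your proposal is correct and follows the paper's own route: the paper obtains this corollary precisely by setting $\Phi(t)=t^{p}$, $\Psi(t)=t^{q}$ in Lemma \ref{lem4.2.} (citing \cite{GulShu} for the original), and your verification of the hypotheses \eqref{condstrFrM} and \eqref{condweakFrM} via the equivalence in Theorem \ref{bounFrMaxOrl} together with the classical strong $(p,q)$ and weak $(1,q)$ bounds for $M_{\alpha}$ is a legitimate way to supply the checks the paper leaves implicit. You are also right to flag the exponent: substituting $\Psi^{-1}(t^{-n})=t^{-n/q}$ into \eqref{4.5} yields the weight $t^{-n/q}$, and your test function $f=\chi_{B(0,R)}$ with $R\to\infty$ shows the printed $t^{-n/p}$ version is genuinely false for $\alpha>0$ (the two weights coincide only when $\alpha=0$, $p=q$), so the first display of the corollary indeed contains a misprint and $t^{-n/p}$ should read $t^{-n/q}$.
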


\begin{thm}\label{thm4.4.FrMax}
Let $0\le\a<n$ and the functions $(\varphi_1,\varphi_2)$ and $(\Phi, \Psi)$ satisfy the condition
\begin{equation}\label{eq3.6.VZfrMax}
\sup_{r<t<\infty} \Psi^{-1}\big(t^{-n}\big) \es_{t<s<\i}\frac{\varphi_1(x,s)}{\Phi^{-1}\big(s^{-n}\big)} \le C \, \varphi_2(x,r),
\end{equation}
where $C$ does not depend on $x$ and $r$.
Then for the conditions \eqref{condstrFrM},  the fractional maximal operator $M_{\a}$ is bounded from $M_{\Phi,\varphi_1}(\Rn)$ to $M_{\Psi,\varphi_2}(\Rn)$
and for the conditions \eqref{condweakFrM}, it is bounded from $M_{\Phi,\varphi_1}(\Rn)$ to $WM_{\Psi,\varphi_2}(\Rn)$.
\end{thm}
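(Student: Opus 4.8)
The plan is to derive the global bound in generalized Orlicz--Morrey spaces from the pointwise-in-$r$ local estimate of Lemma \ref{lem4.2.} combined with the supremal boundedness criterion of Theorem \ref{thm5.1}. The whole argument reduces the two-variable Morrey norm to a one-dimensional weighted $L_\infty$ inequality in the radial variable, with the center $x$ frozen. I treat the strong case under \eqref{condstrFrM} first; the weak case will be identical.

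Fix $x\in\Rn$ and set $g_x(t):=\|f\|_{L_{\Phi}(B(x,t))}$ and $u(t):=\Psi^{-1}(t^{-n})$. The function $g_x$ is non-negative, non-decreasing in $t$, and $g_x(t)\to 0$ as $t\to 0+$ by absolute continuity of the Orlicz norm over shrinking balls, so $g_x\in\mathcal{A}$. By definition of $\overline{S}_{u}$ we have $\sup_{t>2r}\Psi^{-1}(t^{-n})\|f\|_{L_{\Phi}(B(x,t))}=(\overline{S}_{u}g_x)(2r)$, and since $\overline{S}_{u}g_x$ is non-increasing, $(\overline{S}_{u}g_x)(2r)\le(\overline{S}_{u}g_x)(r)$. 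Multiplying \eqref{4.5} by $\varphi_2(x,r)^{-1}\Psi^{-1}(r^{-n})$ and taking the supremum over $r$ and $x$ therefore gives
$$\|M_\a f\|_{M_{\Psi,\varphi_2}}\lesssim\sup_{x}\,\sup_{r>0}\varphi_2(x,r)^{-1}(\overline{S}_{u}g_x)(r)=\sup_{x}\|\overline{S}_{u}g_x\|_{L_{\infty,v_2}(0,\infty)},\qquad v_2(t):=\varphi_2(x,t)^{-1}.$$

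Next I would set $v_1(t):=\varphi_1(x,t)^{-1}\Phi^{-1}(t^{-n})$, so that the very definition of the $M_{\Phi,\varphi_1}$-norm reads $\|g_x\|_{L_{\infty,v_1}(0,\infty)}\le\|f\|_{M_{\Phi,\varphi_1}}$. The goal then becomes the boundedness of $\overline{S}_{u}$ from $L_{\infty,v_1}(0,\infty)$ to $L_{\infty,v_2}(0,\infty)$ on the cone $\mathcal{A}$, \emph{uniformly} in $x$. By Theorem \ref{thm5.1} this is equivalent to \eqref{eq6.8}, and the key computation is
$$\|v_1\|_{L_{\infty}(s,\infty)}^{-1}=\Big(\ess_{\tau>s}\frac{\Phi^{-1}(\tau^{-n})}{\varphi_1(x,\tau)}\Big)^{-1}=\es_{\tau>s}\frac{\varphi_1(x,\tau)}{\Phi^{-1}(\tau^{-n})},$$
which is exactly where the essential infimum of \eqref{eq3.6.VZfrMax} is produced. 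Substituting this into \eqref{eq6.8} and relabelling the variables shows that \eqref{eq6.8} is precisely the hypothesis \eqref{eq3.6.VZfrMax}, with the same constant $C$ controlling the operator norm independently of $x$. Chaining this with the previous display yields $\|M_\a f\|_{M_{\Psi,\varphi_2}}\lesssim\|f\|_{M_{\Phi,\varphi_1}}$.

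The weak estimate under \eqref{condweakFrM} follows by the identical scheme, replacing \eqref{4.5} by the weak local bound \eqref{eq100WX} and the target norm by its weak analogue. I expect the main obstacle to lie in the third step: confirming that the abstract criterion \eqref{eq6.8} genuinely collapses to \eqref{eq3.6.VZfrMax} after the reciprocal/ess-inf manipulation, checking the standing hypotheses of Theorem \ref{thm5.1} (in particular $0<\|v_1\|_{L_{\infty}(t,\infty)}<\infty$ for every $t$), and ensuring that every implicit constant is uniform in the center $x$. The measure-theoretic passage between $\sup$ and $\ess\sup$, licensed by the continuity of $s\mapsto\Psi^{-1}(s^{-n})$, is routine but should be recorded explicitly.
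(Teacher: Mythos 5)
Your proposal follows exactly the paper's own route: the local estimate \eqref{4.5} (resp.\ \eqref{eq100WX}) from Lemma \ref{lem4.2.} reduces the Morrey norm of $M_\a f$ to the supremal operator $\overline{S}_{u}$ acting on $t\mapsto\|f\|_{L_{\Phi}(B(x,t))}$, and Theorem \ref{thm5.1} with $v_1(t)=\varphi_1(x,t)^{-1}\Phi^{-1}(t^{-n})$, $v_2(t)=\varphi_2(x,t)^{-1}$ and $u(t)=\Psi^{-1}(t^{-n})$ turns condition \eqref{eq6.8} into exactly \eqref{eq3.6.VZfrMax}. The paper's proof is precisely the two-line chain of inequalities you describe; your write-up merely makes explicit the identification of the weights and the membership of $g_x$ in the cone $\mathcal{A}$, which the paper leaves implicit.
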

\begin{proof}
By Lemma \ref{lem4.2.} and Theorem \ref{thm5.1} we get
\begin{equation*}
\begin{split}
\|M_{\a} f\|_{M_{\Psi,\varphi_2}} & \lesssim \sup\limits_{x\in\Rn, r>0}
\varphi_2(x,r)^{-1}\,
\sup_{t>r} \Psi^{-1}\big(t^{-n}\big) \, \|f\|_{L_{\Phi}(B(x,t))}
\\
& \lesssim \sup\limits_{x\in\Rn, r>0}
\varphi_1(x,r)^{-1} \Phi^{-1}\big(r^{-n}\big) \|f\|_{L_{\Phi}(B(x,r))}
\\
& = \|f\|_{M_{\Phi,\varphi_1}},
\end{split}
\end{equation*}
if \eqref{condstrFrM} satisfied and
\begin{equation*}
\begin{split}
\|M_{\a} f\|_{WM_{\Psi,\varphi_2}} & \lesssim \sup\limits_{x\in\Rn, r>0}
\varphi_2(x,r)^{-1}\,
\sup_{t>r} \Psi^{-1}\big(t^{-n}\big) \, \|f\|_{L_{\Phi}(B(x,t))}
\\
& \lesssim \sup\limits_{x\in\Rn, r>0}
\varphi_1(x,r)^{-1} \Phi^{-1}\big(r^{-n}\big) \|f\|_{L_{\Phi}(B(x,r))}
\\
& = \|f\|_{M_{\Phi,\varphi_1}},
\end{split}
\end{equation*}
if \eqref{condweakFrM} satisfied.
\end{proof}

Note that analogue of the Theorem \ref{thm4.4.FrMax} for the Riesz potential proved in \cite{GulDerJFSA} as follows.
\begin{thm}\label{thm4.4.}
Let $0<\a<n$ and the functions $(\varphi_1,\varphi_2)$ and $(\Phi, \Psi)$ satisfy the condition
\begin{equation}\label{eq3.6.VZPot}
\int_{r}^{\i} \es_{t<s<\i}\frac{\varphi_1(x,s)}{\Phi^{-1}\big(s^{-n}\big)}\Psi^{-1}\big(t^{-n}\big)\frac{dt}{t}  \le C \, \varphi_2(x,r),
\end{equation}
where $C$ does not depend on $x$ and $r$.
Then for the conditions \eqref{condstrpot}, $I_{\a}$ is bounded from $M_{\Phi,\varphi_1}(\Rn)$ to $M_{\Psi,\varphi_2}(\Rn)$
and for the conditions \eqref{condweakpot}, $I_{\a}$ is bounded from $M_{\Phi,\varphi_1}(\Rn)$ to $WM_{\Psi,\varphi_2}(\Rn)$.
\end{thm}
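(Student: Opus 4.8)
The plan is to mirror the two-stage argument used for $M_{\alpha}$ in Theorem \ref{thm4.4.FrMax}: first pass from the global Orlicz--Morrey estimate to the local estimate of Lemma \ref{lemGultecnOrl}, and then absorb the $L_{\Phi}$-norms over balls into $\|f\|_{M_{\Phi,\varphi_1}}$ using the growth condition \eqref{eq3.6.VZPot}. The one structural difference from the maximal-operator case is that the local bound for $I_{\alpha}$ contains an integral in $t$ rather than a supremum, so I would argue directly with that integral instead of invoking the supremal-operator criterion of Theorem \ref{thm5.1}.

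I would start with the strong case under \eqref{condstrpot}. Fixing $x\in\Rn$, $r>0$ and writing $B=B(x,r)$, Lemma \ref{lemGultecnOrl} gives
$$\Psi^{-1}(r^{-n})\,\|I_{\alpha} f\|_{L_{\Psi}(B)} \lesssim \int_{2r}^{\infty} \|f\|_{L_{\Phi}(B(x,t))}\,\Psi^{-1}(t^{-n})\,\frac{dt}{t}.$$
The heart of the proof is to bound $\|f\|_{L_{\Phi}(B(x,t))}$ by the Orlicz--Morrey norm. Since $t\mapsto \|f\|_{L_{\Phi}(B(x,t))}$ is non-decreasing, for a.e. $s>t$ one has
$$\|f\|_{L_{\Phi}(B(x,t))}\le \|f\|_{L_{\Phi}(B(x,s))}\le \frac{\varphi_1(x,s)}{\Phi^{-1}(s^{-n})}\,\|f\|_{M_{\Phi,\varphi_1}},$$
and taking the essential infimum over $s\in(t,\infty)$ produces
$$\|f\|_{L_{\Phi}(B(x,t))}\le \Big(\es_{t<s<\infty}\frac{\varphi_1(x,s)}{\Phi^{-1}(s^{-n})}\Big)\,\|f\|_{M_{\Phi,\varphi_1}}.$$
This monotonicity passage is exactly what allows the hypothesis to be stated through the essential-infimum envelope of $\varphi_1/\Phi^{-1}$, with no monotonicity assumption on $\varphi_1$ itself, and I expect it to be the most delicate point to phrase cleanly.

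Next I would insert this bound into the local estimate, enlarge $\int_{2r}^{\infty}$ to $\int_{r}^{\infty}$ (the integrand being non-negative), and invoke \eqref{eq3.6.VZPot} to obtain
$$\Psi^{-1}(r^{-n})\,\|I_{\alpha} f\|_{L_{\Psi}(B)}\lesssim \|f\|_{M_{\Phi,\varphi_1}}\int_{r}^{\infty}\es_{t<s<\infty}\frac{\varphi_1(x,s)}{\Phi^{-1}(s^{-n})}\,\Psi^{-1}(t^{-n})\,\frac{dt}{t}\lesssim \varphi_2(x,r)\,\|f\|_{M_{\Phi,\varphi_1}}.$$
Dividing by $\varphi_2(x,r)$ and taking the supremum over $x$ and $r$ would give $\|I_{\alpha} f\|_{M_{\Psi,\varphi_2}}\lesssim \|f\|_{M_{\Phi,\varphi_1}}$.

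For the weak case I would repeat the same steps, replacing the strong local bound \eqref{strGultecnOrl} by the weak one \eqref{weakGultecnOrl} (valid under \eqref{condweakpot}) and the target norm $\|\cdot\|_{M_{\Psi,\varphi_2}}$ by $\|\cdot\|_{WM_{\Psi,\varphi_2}}$; since the right-hand sides of \eqref{strGultecnOrl} and \eqref{weakGultecnOrl} coincide, the identical chain of inequalities yields $\|I_{\alpha} f\|_{WM_{\Psi,\varphi_2}}\lesssim \|f\|_{M_{\Phi,\varphi_1}}$. Apart from the monotonicity step, the remaining work is routine bookkeeping; one should only note that \eqref{eq3.6.VZPot} guarantees the finiteness of the integrals involved, so that the manipulations are justified.
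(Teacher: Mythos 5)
Your argument is correct, and it is the intended one: the theorem is stated in this paper only by citation to \cite{GulDerJFSA}, but the proof there, like the paper's own proof of the fractional-maximal analogue (Theorem \ref{thm4.4.FrMax}), consists of exactly the two stages you describe --- the local estimate of Lemma \ref{lemGultecnOrl} followed by the absorption of $\|f\|_{L_{\Phi}(B(x,t))}$ into $\|f\|_{M_{\Phi,\varphi_1}}$ via the weight condition. The only presentational difference is that the paper routes the second stage through an abstract boundedness criterion for a weighted integral (respectively supremal, as in Theorem \ref{thm5.1}) operator on the cone of non-decreasing functions vanishing at the origin, applied to $g(t)=\|f\|_{L_{\Phi}(B(x,t))}$, whereas you unpack that criterion by hand: the chain
\begin{equation*}
\|f\|_{L_{\Phi}(B(x,t))}\le \|f\|_{L_{\Phi}(B(x,s))}\le \frac{\varphi_1(x,s)}{\Phi^{-1}(s^{-n})}\,\|f\|_{M_{\Phi,\varphi_1}}
\quad\text{for all } s>t,
\end{equation*}
followed by passing to $\es_{t<s<\i}$ (legitimate since the pointwise bound holds for \emph{every} $s>t$, and the infimum is dominated by the essential infimum), is precisely the sufficiency half of that criterion. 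Your version is more self-contained and makes transparent why the hypothesis can be phrased through the essential-infimum envelope of $\varphi_1/\Phi^{-1}(s^{-n})$ without any monotonicity assumption on $\varphi_1$; the paper's formulation buys a reusable necessary-and-sufficient statement (Theorem \ref{thm5.1}) at the cost of abstraction. All remaining steps (enlarging $\int_{2r}^{\i}$ to $\int_{r}^{\i}$, dividing by $\varphi_2(x,r)$, and the verbatim repetition in the weak case using \eqref{weakGultecnOrl}) are as in the source.
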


\begin{rem}\label{supzyg} The condition \eqref{eq3.6.VZfrMax} is weaker than  \eqref{eq3.6.VZPot}. Indeed, \eqref{eq3.6.VZPot}
implies \eqref{eq3.6.VZfrMax}:
\begin{align*}
\varphi_2(x,r) & \gtrsim \int_{r}^{\i} \es_{t<\tau<\i}\frac{\varphi_1(x,\tau)}{\Phi^{-1}\big(\tau^{-n}\big)}\Psi^{-1}\big(t^{-n}\big)\frac{dt}{t}
\\
& \gtrsim \int_{s}^{\i} \es_{t<\tau<\i}\frac{\varphi_1(x,\tau)}{\Phi^{-1}\big(\tau^{-n}\big)}\Psi^{-1}\big(t^{-n}\big)\frac{dt}{t}
\\
& \gtrsim \es_{s<\tau<\i}\frac{\varphi_1(x,\tau)}{\Phi^{-1}\big(\tau^{-n}\big)} \int_s^{\infty}
\Psi^{-1}\big(t^{-n}\big)\frac{dt}{t}
\\
& \thickapprox  \es_{s<\tau<\i}\frac{\varphi_1(x,\tau)}{\Phi^{-1}\big(\tau^{-n}\big)} \Psi^{-1}\big(s^{-n}\big),
\end{align*}
where we took  $s \in (r,\infty)$, so that
\begin{equation*}
\sup\limits_{s>r} \es_{s<\tau<\i}\frac{\varphi_1(x,\tau)}{\Phi^{-1}\big(\tau^{-n}\big)} \Psi^{-1}\big(s^{-n}\big) \lesssim \varphi_2(x,r).
\end{equation*}
On the other hand the functions $\varphi_1(x,t)=\frac{\Phi^{-1}\big(t^{-n}\big)}{\Psi^{-1}\big(t^{-n}\big)}$ and $\varphi_2(x,t)=1$
satisfy the condition \eqref{eq3.6.VZfrMax}, but do not satisfy the condition \eqref{eq3.6.VZPot}.
\end{rem}

Consider the case $\a=0$ and $\Phi=\Psi$. In this case condition \eqref{condweakFrM} satisfied by any Young function and condition \eqref{condstrFrM} satisfied if and only if $\Phi\in\nabla_2$ (see \cite{Cianchi1, Kita2} for details). Therefore we get the following corollary for Hardy-Littlewood maximal operator which was proved in \cite{DGS}.
\begin{cor}
Let the functions $\varphi_1,\varphi_2$ and $ \Phi$ satisfy the condition
\begin{equation}\label{eq3.6.VZMax}
\sup_{r<t<\infty} \Phi^{-1}\big(t^{-n}\big) \es_{t<s<\i}\frac{\varphi_1(x,s)}{\Phi^{-1}\big(s^{-n}\big)} \le C \, \varphi_2(x,r),
\end{equation}
where $C$ does not depend on $x$ and $r$.
Then for $\Phi \in \nabla_2$,  the maximal operator $M$ is bounded from $M_{\Phi,\varphi_1}(\Rn)$ to $M_{\Phi,\varphi_2}(\Rn)$
and for every Young function $\Phi$, it is bounded from $M_{\Phi,\varphi_1}(\Rn)$ to $WM_{\Phi,\varphi_2}(\Rn)$.
\end{cor}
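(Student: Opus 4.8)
The plan is to obtain this corollary directly from Theorem \ref{thm4.4.FrMax} by specializing to the endpoint $\alpha=0$ together with $\Psi=\Phi$. First I would record the two trivial reductions this substitution produces. Since $M_0\equiv M$ is the Hardy--Littlewood maximal operator, the conclusion of Theorem \ref{thm4.4.FrMax} is already the conclusion we want, provided we verify that its hypotheses specialize correctly. The boundedness condition \eqref{eq3.6.VZfrMax} is literally the same inequality as \eqref{eq3.6.VZMax} once one sets $\Psi=\Phi$, so no work is needed there. The entire task therefore reduces to unwinding the two Orlicz-side hypotheses \eqref{condstrFrM} and \eqref{condweakFrM} in the case $\alpha=0$, $\Psi=\Phi$.

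For the weak estimate I would unwind \eqref{condweakFrM}. There $Q$ is the Young function whose inverse is $Q^{-1}(r)=r^{\alpha/n}\Psi^{-1}(r)$; putting $\alpha=0$ and $\Psi=\Phi$ gives $Q^{-1}(r)=\Phi^{-1}(r)$, hence $Q=\Phi$. The requirement that $\Phi$ dominate $Q=\Phi$ globally is then the statement that $\Phi(s)\le\Phi(cs)$ for some $c>0$ and all $s$, which holds trivially with $c=1$. Thus \eqref{condweakFrM} is satisfied by \emph{every} Young function $\Phi$, and Theorem \ref{thm4.4.FrMax} delivers the boundedness of $M$ from $M_{\Phi,\varphi_1}(\Rn)$ to $WM_{\Phi,\varphi_2}(\Rn)$ with no restriction on $\Phi$.

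For the strong estimate I would examine \eqref{condstrFrM} in the same limit. The cleanest route is \emph{not} to push the integral formulas \eqref{chi2.13} and \eqref{chi2.3} through the degenerate value $\alpha=0$ (where $p=n/\alpha\to\infty$ and the auxiliary Young functions $\Phi_{n/\alpha}$, $\Psi_{n/\alpha}$ become singular), but instead to invoke the classical Orlicz-space characterization. By Theorem \ref{bounFrMaxOrl}, \eqref{condstrFrM} is exactly the strong $(\Phi,\Psi)$-boundedness of $M_\alpha$ on Orlicz spaces; for $\alpha=0$ and $\Psi=\Phi$ this is the boundedness of $M$ on $L_\Phi(\Rn)$, which by the results of Cianchi and Kita (\cite{Cianchi1, Kita2}) holds if and only if $\Phi\in\nabla_2$. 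Feeding this equivalence into Theorem \ref{thm4.4.FrMax} yields the boundedness of $M$ from $M_{\Phi,\varphi_1}(\Rn)$ to $M_{\Phi,\varphi_2}(\Rn)$ for $\Phi\in\nabla_2$, completing the proof. The only genuinely delicate point is this last identification of the endpoint form of \eqref{condstrFrM} with the $\nabla_2$-condition, and I would handle it by citation rather than by direct computation.
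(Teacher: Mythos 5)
Your proposal is correct and follows essentially the same route as the paper: the authors also obtain this corollary by specializing Theorem \ref{thm4.4.FrMax} to $\alpha=0$, $\Psi=\Phi$, observing that \eqref{condweakFrM} then holds for every Young function while \eqref{condstrFrM} reduces (by citation to Cianchi and Kita) to $\Phi\in\nabla_2$. Your explicit computation that $Q=\Phi$ in the weak case is a small addition of detail, but the argument is the same.
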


If we take $\Phi(t)=t^{p},\,\Psi(t)=t^{q},\,1\le p,q<\i$ at Theorem \ref{thm4.4.FrMax} we get the Spanne-Guliyev type result which was proved in \cite{GulShu}.
\begin{cor}
Let $0\le\a<n$, $1\le p < \frac{n}{\a}$, $\frac{1}{q}=\frac{1}{p}-\frac{\a}{n}$, and $(\varphi_1,\varphi_2)$ satisfy the condition
\begin{equation}\label{eq4.6.GSfrMax}
\sup_{r<t<\infty} \frac{\es_{t<s<\i}\varphi_{1}(x,s)s^{\frac{n}{p}}}{t^{\frac{n}{q}}}\le C\varphi_{2}(x,r),
\end{equation}
where $C$ does not depend on $x$ and $r$. Then for $p>1$, $M_{\a}$ is bounded from $M_{p,\varphi_1}(\Rn)$ to $M_{q,\varphi_2}(\Rn)$
and for  $p=1$, it is bounded from $M_{1,\varphi_1}(\Rn)$ to $WM_{q,\varphi_2}(\Rn)$.
\end{cor}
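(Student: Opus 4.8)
The plan is to obtain this corollary as a direct specialization of Theorem \ref{thm4.4.FrMax} to the power Young functions $\Phi(t)=t^p$ and $\Psi(t)=t^q$. For these choices one has the explicit inverses $\Phi^{-1}(u)=u^{1/p}$ and $\Psi^{-1}(u)=u^{1/q}$, and, as already recorded in Section 3, the generalized Orlicz--Morrey spaces collapse to the generalized Morrey spaces, $M_{\Phi,\varphi_1}(\Rn)=M_{p,\varphi_1}(\Rn)$, $M_{\Psi,\varphi_2}(\Rn)=M_{q,\varphi_2}(\Rn)$, and likewise $WM_{\Psi,\varphi_2}(\Rn)=WM_{q,\varphi_2}(\Rn)$. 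Thus it remains only to check that, under the stated numerical relations between $p$, $q$ and $\a$, the hypotheses of Theorem \ref{thm4.4.FrMax} reduce exactly to \eqref{eq4.6.GSfrMax} together with the classical admissible ranges of $p$.

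First I would verify the reduction of the weight condition. Substituting $\Phi^{-1}(s^{-n})=s^{-n/p}$ and $\Psi^{-1}(t^{-n})=t^{-n/q}$ into \eqref{eq3.6.VZfrMax}, the left-hand side becomes
$$
\sup_{r<t<\i} t^{-n/q}\,\es_{t<s<\i}\varphi_1(x,s)\,s^{n/p}
=\sup_{r<t<\i}\frac{\es_{t<s<\i}\varphi_1(x,s)\,s^{n/p}}{t^{n/q}},
$$
so \eqref{eq3.6.VZfrMax} is literally \eqref{eq4.6.GSfrMax}. This step is a pure substitution and carries no difficulty.

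The substantive point is to confirm that the pair $(\Phi,\Psi)=(t^p,t^q)$ satisfies \eqref{condstrFrM} when $p>1$ and \eqref{condweakFrM} when $p=1$. Here the cleanest route is to invoke Theorem \ref{bounFrMaxOrl}, by which \eqref{condstrFrM} (resp. \eqref{condweakFrM}) is equivalent to the strong (resp. weak) type boundedness of $M_{\a}$ from $L_{\Phi}(\Rn)$ to $L_{\Psi}(\Rn)$ (resp. to $WL_{\Psi}(\Rn)$). For power functions these are precisely the classical strong and weak $(p,q)$ estimates for the fractional maximal operator with $1/q=1/p-\a/n$, valid for $1<p\le n/\a$ and $1\le p\le n/\a$ respectively, as recalled in the Introduction; hence the required Orlicz conditions hold automatically in the stated ranges. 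For the weak case one can also check \eqref{condweakFrM} directly: since $Q^{-1}(u)=u^{\a/n}\Psi^{-1}(u)=u^{\a/n+1/q}=u^{1/p}$ by $1/q=1/p-\a/n$, we obtain $Q=\Phi$, so the global domination of $Q$ by $\Phi$ is trivial.

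The only place where a genuine computation would be needed is a direct verification of \eqref{condstrFrM}: its first part $\int_0^1 \Psi(t)\,t^{-1-n/(n-\a)}\,dt<\i$ amounts to $q>n/(n-\a)$, which is exactly $p>1$, while its second part requires identifying the conjugate-type Young function $\Psi_{n/\a}$ from \eqref{chi2.3} and showing that $\Phi$ dominates it globally. This last identification is the main potential obstacle, but it is entirely bypassed by the equivalence in Theorem \ref{bounFrMaxOrl}. With the hypotheses of Theorem \ref{thm4.4.FrMax} thus verified, applying that theorem yields boundedness of $M_{\a}$ from $M_{p,\varphi_1}(\Rn)$ to $M_{q,\varphi_2}(\Rn)$ for $p>1$ and from $M_{1,\varphi_1}(\Rn)$ to $WM_{q,\varphi_2}(\Rn)$ for $p=1$, which is the assertion.
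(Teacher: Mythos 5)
Your proposal is correct and follows exactly the route the paper intends: the corollary is stated there as the specialization $\Phi(t)=t^p$, $\Psi(t)=t^q$ of Theorem \ref{thm4.4.FrMax}, with the weight condition \eqref{eq3.6.VZfrMax} reducing to \eqref{eq4.6.GSfrMax} by direct substitution of the power inverses. Your additional verifications (that $Q=\Phi$ in the weak case and that \eqref{condstrFrM} holds for $p>1$ via the equivalence in Theorem \ref{bounFrMaxOrl} and the classical $(p,q)$ estimates) correctly fill in details the paper leaves implicit.
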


In the case $\varphi_1(x,r)=\frac{\Phi^{-1}\big(r^{-n}\big)}{\Phi^{-1}\big(r^{-\lambda_1}\big)}$, $\varphi_2(x,r)=\frac{\Psi^{-1}\big(r^{-n}\big)}{\Psi^{-1}\big(r^{-\lambda_2}\big)}$  from Theorem \ref{thm4.4.FrMax} we get the following Spanne type theorem for the boundedness of the fractional maximal operator on Orlicz-Morrey spaces.
\begin{cor} \label{ggffdd}
Let $0\le\a<n$, $\Phi$ and $\Psi$ be Young functions, $0 \le \lambda_1, \lambda_2 < n$ and $(\Phi, \Psi)$ satisfy the condition
\begin{equation}\label{eq3.6.VZfrMaxcor}
\sup_{r<t<\infty} \frac{\Psi^{-1}\big(t^{-n}\big)}{\Phi^{-1}\big(t^{-\lambda_1}\big)}  \le C \, \frac{\Psi^{-1}\big(r^{-n}\big)}{\Psi^{-1}\big(r^{-\lambda_2}\big)},
\end{equation}
where $C$ does not depend on $r$.
Then for the conditions \eqref{condstrFrM}, $M_{\a}$ is bounded from $M_{\Phi,\lambda_1}(\Rn)$ to $M_{\Psi,\lambda_2}(\Rn)$
and for the conditions \eqref{condweakFrM}, $M_{\a}$ is bounded from $M_{\Phi,\lambda_1}(\Rn)$ to $WM_{\Psi,\lambda_2}(\Rn)$.
\end{cor}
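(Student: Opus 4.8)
The plan is to deduce Corollary \ref{ggffdd} directly from Theorem \ref{thm4.4.FrMax} by substituting the two specific weight functions
$$\varphi_1(x,r)=\frac{\Phi^{-1}\big(r^{-n}\big)}{\Phi^{-1}\big(r^{-\lambda_1}\big)}, \qquad \varphi_2(x,r)=\frac{\Psi^{-1}\big(r^{-n}\big)}{\Psi^{-1}\big(r^{-\lambda_2}\big)},$$
and verifying that, for this choice, the general supremal condition \eqref{eq3.6.VZfrMax} collapses exactly onto \eqref{eq3.6.VZfrMaxcor}. Recall that with these weights the generalized spaces coincide with the Orlicz--Morrey spaces, $M_{\Phi,\varphi_1}=M_{\Phi,\lambda_1}$ and $M_{\Psi,\varphi_2}=M_{\Psi,\lambda_2}$, so once the hypothesis of Theorem \ref{thm4.4.FrMax} is checked, both conclusions follow at once: the pair $(\Phi,\Psi)$ and the two structural conditions \eqref{condstrFrM} and \eqref{condweakFrM} are untouched by the substitution, giving the strong-type bound under \eqref{condstrFrM} and the weak-type bound under \eqref{condweakFrM}.

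The only computation to carry out is the inner essential infimum in \eqref{eq3.6.VZfrMax}. First I would simplify
$$\frac{\varphi_1(x,s)}{\Phi^{-1}\big(s^{-n}\big)}=\frac{1}{\Phi^{-1}\big(s^{-\lambda_1}\big)},$$
and then observe that this expression is non-decreasing in $s$: indeed $\Phi^{-1}$ is non-decreasing while $s\mapsto s^{-\lambda_1}$ is non-increasing for $\lambda_1\ge 0$, so $s\mapsto \Phi^{-1}(s^{-\lambda_1})$ is non-increasing and its reciprocal is non-decreasing. For a non-decreasing function the essential infimum over $s\in(t,\i)$ is governed by the left endpoint, hence
$$\es_{t<s<\i}\frac{\varphi_1(x,s)}{\Phi^{-1}\big(s^{-n}\big)}=\frac{1}{\Phi^{-1}\big(t^{-\lambda_1}\big)}.$$

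Substituting this back, the left-hand side of \eqref{eq3.6.VZfrMax} becomes
$$\sup_{r<t<\i}\Psi^{-1}\big(t^{-n}\big)\,\frac{1}{\Phi^{-1}\big(t^{-\lambda_1}\big)}=\sup_{r<t<\i}\frac{\Psi^{-1}\big(t^{-n}\big)}{\Phi^{-1}\big(t^{-\lambda_1}\big)},$$
while the right-hand side $C\,\varphi_2(x,r)$ is precisely $C\,\frac{\Psi^{-1}(r^{-n})}{\Psi^{-1}(r^{-\lambda_2})}$. Thus \eqref{eq3.6.VZfrMaxcor} is exactly \eqref{eq3.6.VZfrMax} for the chosen weights, and invoking Theorem \ref{thm4.4.FrMax} finishes the proof. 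I expect the only delicate point to be the handling of the essential infimum: one must justify that monotonicity of $1/\Phi^{-1}(s^{-\lambda_1})$ lets us replace $\es_{s>t}$ by the endpoint value at $t$, where continuity of $\Phi^{-1}$ for $\Phi\in\mathcal{Y}$ removes any ambiguity between the value and the right limit. Everything else is a direct substitution requiring no estimation.
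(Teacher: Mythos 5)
Your proposal is correct and matches the paper's route exactly: the paper obtains this corollary by specializing Theorem \ref{thm4.4.FrMax} to $\varphi_1(x,r)=\Phi^{-1}(r^{-n})/\Phi^{-1}(r^{-\lambda_1})$ and $\varphi_2(x,r)=\Psi^{-1}(r^{-n})/\Psi^{-1}(r^{-\lambda_2})$, under which $M_{\Phi,\varphi_1}=M_{\Phi,\lambda_1}$, $M_{\Psi,\varphi_2}=M_{\Psi,\lambda_2}$, and condition \eqref{eq3.6.VZfrMax} reduces to \eqref{eq3.6.VZfrMaxcor}. Your explicit verification that the essential infimum collapses to the endpoint value, via monotonicity of $s\mapsto 1/\Phi^{-1}(s^{-\lambda_1})$ and continuity of $\Phi^{-1}$, is precisely the (unwritten) computation the paper relies on.
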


\begin{rem}
If we take $\Phi(t)=t^{p},\,\Psi(t)=t^{q},\,1\le p,q<\i$ at Corollary \ref{ggffdd} we get Spanne type boundedness of $M_{\a}$, i.e. if $0\le\a<n$, $1<p<\frac{n}{\a}$, $0<\lambda<n-\a p$, $\frac{1}{p}-\frac{1}{q}=\frac{\a}{n}$ and $\frac{\lambda}{p}=\frac{\mu}{q}$, then for $p>1$ $M_{\a}$ is bounded from $M_{p,\lambda}(\Rn)$ to $M_{q,\mu}(\Rn)$ and for $p=1$, $M_{\a}$ is bounded from $M_{1,\lambda}(\Rn)$ to $WM_{q,\mu}(\Rn)$.
\end{rem}



\section{Commutators of the fractional maximal operator in the spaces $M_{\Phi,\varphi}$}

The theory of commutator was originally studied by, Coifman, Rochberg and Weiss in \cite{CRW}. Since then, many authors have been interested in studying this theory.

We recall the definition of the space of $BMO(\Rn)$.

\begin{defn}
Suppose that $f\in L_1^{\rm loc}(\Rn)$, let
\begin{equation*}
\|f\|_\ast=\sup_{x\in\Rn, r>0}\frac{1}{|B(x,r)|}
\int_{B(x,r)}|f(y)-f_{B(x,r)}|dy<\infty,
\end{equation*}
where
$$
f_{B(x,r)}=\frac{1}{|B(x,r)|} \int_{B(x,r)} f(y)dy.
$$
Define
$$
BMO(\Rn)=\{ f\in L_1^{\rm loc}(\Rn) ~ : ~ \| f \|_{\ast} < \infty  \}.
$$
\end{defn}

Modulo constants, the space $BMO(\Rn)$ is a Banach space with respect to the norm $\| \cdot \|_{\ast}$.

\begin{rem} \label{rem2.4.}
$(1)~~$ The John--Nirenberg inequality:
there are constants $C_1$, $C_2>0$, such that for all $f \in BMO(\Rn)$ and $\beta>0$
$$
\left| \left\{ x \in B \, : \, |f(x)-f_{B}|>\beta \right\}\right|
\le C_1 |B| e^{-C_2 \beta/\| f \|_{\ast}}, ~~~ \forall B \subset \Rn.
$$

$(2)~~$ The John--Nirenberg inequality implies that
\begin{equation} \label{lem2.4.}
\|f\|_\ast \thickapprox \sup_{x\in\Rn, r>0}\left(\frac{1}{|B(x,r)|}
\int_{B(x,r)}|f(y)-f_{B(x,r)}|^p dy\right)^{\frac{1}{p}}
\end{equation}
for $1<p<\infty$.

$(3)~~$ Let $f\in BMO(\Rn)$. Then there is a constant $C>0$ such that
\begin{equation} \label{propBMO}
\left|f_{B(x,r)}-f_{B(x,t)}\right| \le C \|f\|_\ast \ln \frac{t}{r} \;\;\; \mbox{for} \;\;\; 0<2r<t,
\end{equation}
where $C$ is independent of $f$, $x$, $r$ and $t$.
\end{rem}

\begin{defn}
A Young function $\Phi$ is said to be of upper type p (resp. lower type p) for some $p\in[0,\i)$, if there exists a positive constant $C$ such that, for all $t\in[1,\i)$(resp. $t\in[0,1]$) and $s\in[0,\i)$,
$$
\Phi(st)\le Ct^p\Phi(s).
$$
\end{defn}

\begin{rem}\label{remlowup}
We know that if $\Phi$ is lower type $p_0$ and upper type $p_1$ with $1<p_0\le p_1<\i$, then $\Phi\in \Delta_2\cap\nabla_2$. Conversely if $\Phi\in \Delta_2\cap\nabla_2$, then  $\Phi$ is lower type $p_0$ and upper type $p_1$ with $1<p_0\le p_1<\i$ (see \cite{KokKrbec}).
\end{rem}

\begin{lem}\cite{Ky1}\label{Kylowupp}
Let $\Phi$ be a Young function which is lower type $p_0$ and upper type $p_1$ with $0<p_0\le p_1<\i$. Let $\widetilde{C}$ be a positive constant. Then there exists a positive constant $C$ such that for any ball $B$ of $\Rn$ and $\mu\in(0,\i)$
$$\int_{B}\Phi\left(\frac{|f(x)|}{\mu}\right)dx\le \widetilde{C}$$
implies that $\|f\|_{L_\Phi(B)}\le C\mu$.
\end{lem}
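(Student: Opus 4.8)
The plan is to unwind the definition of the Luxemburg norm and to exploit only the lower type $p_0$ hypothesis; the upper type assumption is not needed for this direction. Recall that, by definition, $\|f\|_{L_\Phi(B)} = \inf\{\lambda>0 : \int_B \Phi(|f(x)|/\lambda)\,dx \le 1\}$, so to conclude $\|f\|_{L_\Phi(B)} \le C\mu$ it suffices to exhibit a single admissible value $\lambda = C\mu$, that is, to verify $\int_B \Phi(|f(x)|/(C\mu))\,dx \le 1$ for a suitable constant $C \ge 1$ which we are free to choose depending only on $\Phi$, $p_0$ and $\widetilde{C}$.

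First I would fix $C \ge 1$ (to be pinned down below) and write $|f(x)|/(C\mu) = (1/C)\cdot(|f(x)|/\mu)$. Since $1/C \le 1$, the lower type $p_0$ property of $\Phi$ applies with $t = 1/C$ and $s = |f(x)|/\mu$, giving the pointwise bound $\Phi(|f(x)|/(C\mu)) \le C_0\, C^{-p_0}\,\Phi(|f(x)|/\mu)$, where $C_0$ denotes the lower type constant of $\Phi$. Integrating this over $B$ and invoking the hypothesis yields
$$\int_B \Phi\Big(\frac{|f(x)|}{C\mu}\Big)\,dx \le C_0\, C^{-p_0}\int_B \Phi\Big(\frac{|f(x)|}{\mu}\Big)\,dx \le C_0\, C^{-p_0}\,\widetilde{C}.$$

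It then remains only to choose $C$ so that the right-hand side is at most $1$. Taking $C = \max\{1,(C_0\widetilde{C})^{1/p_0}\}$ forces $C_0\, C^{-p_0}\,\widetilde{C}\le 1$, whence $\int_B \Phi(|f(x)|/(C\mu))\,dx \le 1$ and therefore $\|f\|_{L_\Phi(B)}\le C\mu$, as claimed. This $C$ depends only on $p_0$, on the lower type constant of $\Phi$, and on $\widetilde{C}$, so it is uniform in the ball $B$ and in $\mu$, which is exactly the asserted uniformity. The only point requiring care — and effectively the whole content of the argument — is keeping the dilation factor $1/C$ inside $[0,1]$ so that the lower type inequality is legitimately applicable; this is what motivates the cutoff $C \ge 1$. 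Monotonicity of the integral together with the scaling encoded in the lower type condition does the remaining work, so no genuine obstacle arises.
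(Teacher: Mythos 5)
Your argument is correct: exhibiting the single admissible value $\lambda=C\mu$ with $C=\max\{1,(C_0\widetilde{C})^{1/p_0}\}$, applying the lower type inequality with dilation factor $1/C\in(0,1]$, and integrating is exactly the standard proof, and your observation that only the lower type hypothesis is used is accurate. Note that the paper itself gives no proof of this lemma --- it is quoted from the reference \cite{Ky1} --- so there is nothing in the source to compare against; your self-contained two-line verification is precisely what a reader would want here.
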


\begin{lem}\label{Bmo-orlicz}
Let $f\in BMO(\Rn)$ and $\Phi$ be a Young function. Let $\Phi$ is lower type $p_0$ and upper type $p_1$ with $1<p_0\le p_1<\i$, then
$$
\|f\|_\ast \thickapprox \sup_{x\in\Rn, r>0}\Phi^{-1}\big(r^{-n}\big)\left\|f(\cdot)-f_{B(x,r)}\right\|_{L_{\Phi}(B(x,r))}
$$
\end{lem}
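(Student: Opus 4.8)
The plan is to prove the two-sided estimate by establishing each inequality separately, the upper bound for $\|f\|_\ast$ being routine and the lower bound being the substantial part. Throughout I write $B=B(x,r)$ and set $a:=\Phi^{-1}(|B|^{-1})$; since $\Phi\in\Delta_2\cap\nabla_2$ by Remark \ref{remlowup}, both $\Phi$ and $\Phi^{-1}$ are doubling, so $\Phi^{-1}(|B|^{-1})\thickapprox\Phi^{-1}(r^{-n})$ and it suffices to work with $a$.

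For the inequality $\|f\|_\ast\lesssim\sup_{x,r}\Phi^{-1}(r^{-n})\|f(\cdot)-f_B\|_{L_\Phi(B)}$, I would simply apply the Orlicz--H\"older estimate of Lemma \ref{lemHold} to $g=f-f_B$ on $B$: this gives $\|f-f_B\|_{L_1(B)}\le 2|B|\,\Phi^{-1}(|B|^{-1})\,\|f-f_B\|_{L_\Phi(B)}$, hence $\frac{1}{|B|}\|f-f_B\|_{L_1(B)}\lesssim\Phi^{-1}(|B|^{-1})\|f-f_B\|_{L_\Phi(B)}$, and taking the supremum over all balls yields the claim.

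The reverse inequality is the core of the lemma. Here I would fix $B$, set $\mu:=\|f\|_\ast/a$, and aim to verify the hypothesis of Lemma \ref{Kylowupp}, namely $\int_B\Phi(|f(x)-f_B|/\mu)\,dx\le\widetilde{C}$ with $\widetilde{C}$ independent of $B$; the conclusion of that lemma then gives $\|f-f_B\|_{L_\Phi(B)}\le C\mu=C\|f\|_\ast/a$, which after multiplying by $a\thickapprox\Phi^{-1}(r^{-n})$ and taking the supremum is exactly the desired bound. To estimate the integral I would pass to the distribution function via Tonelli, writing $\int_B\Phi(g)\,dx=\int_0^\infty\Phi'(s)\,|\{y\in B:g(y)>s\}|\,ds$ with $g=|f-f_B|/\mu$, and insert the John--Nirenberg inequality from Remark \ref{rem2.4.}(1). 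With the above choice of $\mu$ the exponent simplifies, giving $|\{y\in B:g(y)>s\}|\le C_1|B|\,e^{-C_2 s/a}$, so that $\int_B\Phi(g)\,dx\le C_1|B|\int_0^\infty\Phi'(s)e^{-C_2 s/a}\,ds$; since $|B|=1/\Phi(a)$, the task reduces to showing $\int_0^\infty\Phi'(s)e^{-C_2 s/a}\,ds\lesssim\Phi(a)$.

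This last estimate is where the upper type $p_1$ condition enters and is the main obstacle. I would integrate by parts, the boundary terms vanishing because $\Phi(0)=0$ and, by upper type, $\Phi$ grows only polynomially and is killed by the exponential, which reduces matters to $\frac{C_2}{a}\int_0^\infty\Phi(s)e^{-C_2 s/a}\,ds$. Splitting this integral at $s=a$: on $(0,a)$ one bounds $\Phi(s)\le\Phi(a)$ by monotonicity, giving a contribution $\lesssim\Phi(a)$; on $(a,\infty)$ one writes $s=a\cdot(s/a)$ with $s/a\ge1$ and applies the upper type inequality $\Phi(s)\le C(s/a)^{p_1}\Phi(a)$, after which the substitution $u=s/a$ leaves the convergent integral $\int_1^\infty u^{p_1}e^{-C_2 u}\,du$ and again a contribution $\lesssim\Phi(a)$. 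Summing the two pieces gives $\int_0^\infty\Phi'(s)e^{-C_2 s/a}\,ds\le C''\Phi(a)$, hence $\int_B\Phi(g)\,dx\le C_1C''=\widetilde{C}$, completing the reduction. The delicate points to watch are the justification of the distribution-function identity, legitimate since $\Phi\in\mathcal{Y}$ is absolutely continuous, and the uniformity of all constants in $B$, which holds because every estimate depends on $B$ only through $a$ and cancels against $|B|=1/\Phi(a)$.
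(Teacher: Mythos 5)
Your proposal is correct, and the substantive direction is argued by a genuinely different route than the paper's. The paper also normalizes $\|f\|_\ast=1$ and reduces everything to Lemma \ref{Kylowupp}, but it verifies the hypothesis $\int_B\Phi\big(|f(y)-f_{B}|\,\Phi^{-1}(|B|^{-1})\big)\,dy\lesssim 1$ by the pointwise estimate $\Phi(ts)\lesssim (t^{p_0}+t^{p_1})\Phi(s)$ (using both the lower type $p_0$ and the upper type $p_1$), combined with \eqref{younginverse} and the $L^{p}$-form of the John--Nirenberg inequality, namely \eqref{lem2.4.} applied with $p=p_0$ and $p=p_1$. You instead pass to the distribution function and feed in the exponential John--Nirenberg inequality of Remark \ref{rem2.4.}(1), then integrate by parts and split at $s=a$; with your choice $\mu=\|f\|_\ast/a$ the exponent indeed becomes $e^{-C_2 s/a}$ and the two pieces are each $\lesssim\Phi(a)=|B|^{-1}$, so the computation closes. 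Both arguments are sound and of comparable length. Yours buys a slightly sharper hypothesis for this direction: only the upper type $p_1<\infty$ is really used, since every Young function is of lower type $1$ and Lemma \ref{Kylowupp} only needs some positive lower type; the paper's version is more elementary in that it avoids the layer-cake identity and the integration by parts by quoting the $L^p$ characterization of $BMO$. The easy direction via Lemma \ref{lemHold} (H\"older's inequality in Orlicz form) is the same in both, and your care about replacing $\Phi^{-1}(|B|^{-1})$ by $\Phi^{-1}(r^{-n})$ and about uniformity of constants in $B$ is appropriate, though the paper glosses over these points.
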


\begin{proof}
By H\"{o}lder's inequality, we have
$$\|f\|_\ast \lesssim \sup_{x\in\Rn, r>0}\Phi^{-1}\big(r^{-n}\big)\left\|f(\cdot)-f_{B(x,r)}\right\|_{L_{\Phi}(B(x,r))}.$$

Now we show that
$$
\sup_{x\in\Rn, r>0}\Phi^{-1}\big(r^{-n}\big)\left\|f(\cdot)-f_{B(x,r)}\right\|_{L_{\Phi}(B(x,r))} \lesssim \|f\|_\ast .
$$
Without loss of generality, we may assume that $\|f\|_\ast=1$; otherwise,
we replace $f$ by $f/\|f\|_\ast$. By the fact that $\Phi$ is lower type $p_0$ and upper type $p_1$ and \eqref{younginverse} it follows that
$$
\int_{B(x,r)}\Phi\left(\frac{|f(y)-f_{B(x,r)}|\Phi^{-1}\big(|B(x,r)|^{-1}\big)}{\|f\|_\ast}\right)dy
$$
$$
=\int_{B(x,r)}\Phi\left(|f(y)-f_{B(x,r)}|\Phi^{-1}\big(|B(x,r)|^{-1}\big)\right)dy
$$
$$
\lesssim\frac{1}{|B(x,r)|}\int_{B(x,r)}\left[|f(y)-f_{B(x,r)}|^{p_0}+|f(y)-f_{B(x,r)}|^{p_1}\right]dy\lesssim 1.
$$
By Lemma \ref{Kylowupp} we get the desired result.
\end{proof}

\begin{defn}
Let $\Phi$ be a Young function. Let
$$
a_{\Phi}:=\inf_{t\in(0,\i)}\frac{t\Phi^{\prime}(t)}{\Phi(t)}, \qquad  b_{\Phi}:=\sup_{t\in(0,\i)}\frac{t\Phi^{\prime}(t)}{\Phi(t)}.
$$
\end{defn}

\begin{rem}\label{indorl}
It is known that $\Phi\in \Delta_2\cap\nabla_2$ if and only if $1<a_{\Phi}\le b_{\Phi}<\i$ (See \cite{KrasnRut}).
\end{rem}

\begin{rem}
Remark \ref{indorl} and Remark \ref{remlowup} show us that a Young function $\Phi$ is lower type $p_0$ and upper type $p_1$ with $1<p_0\le p_1<\i$ if and only if $1<a_{\Phi}\le b_{\Phi}<\i$.
\end{rem}

The commutators generated by  $b\in  L^1_{\rm loc}(\Rn)$ and the operators $M_{\a}$ and  $I_{\a}$ are  defined by
\begin{equation}
M_{b,\a}(f)(x)=\sup_{t>0}|B(x,t)|^{-1+\frac{\a}{n}} \, \int _{B(x,t)}|b(x)-b(y)||f(y)|dy,
\end{equation}
\begin{equation}
[b,I_{\a}]f(x)=\int _{\Rn}\frac{b(x)-b(y)}{|x-y|^{n-\alpha}}f(y)dy,
\end{equation}
\begin{equation}
|b,I_{\alpha}|f(x):=\int_{\Rn}\frac{|b(x)-b(y)|}{|x-y|^{n-\alpha}}f(y)dy,
\end{equation}
respectively.




The known boundedness statements for the commutator operators $[b, I_{\alpha }]$ and $|b,I_{\alpha}|$ in Orlicz spaces run as follows.

\begin{thm}\label{TinOrlicz} \cite{FuYangYuan}
Let $0<\alpha <n$ and $b\in BMO(\Rn)$. Let $\Phi$ be a Young function and $\Psi$
defined by  its inverse  $\Psi^{-1}(t):=\Phi^{-1}(t)t^{-\alpha /n}$. If $1<a_{\Phi}\le b_{\Phi}<\infty$
and $1<a_{\Psi}\le b_{\Psi}<\infty$, then $[b,I_{\alpha }]$ is bounded from $L^{\Phi}(\Rn)$ to $L^{\Psi}(\Rn)$.
\end{thm}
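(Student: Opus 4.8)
The plan is to prove this boundedness by the Fefferman--Stein sharp maximal function method, which reduces the commutator to operators whose Orlicz boundedness is already recorded in the paper. Throughout, write $M^{\#}$ for the sharp maximal operator, $M^{\#}_{\delta}g=\big(M^{\#}(|g|^{\delta})\big)^{1/\delta}$, $M_{\delta}g=\big(M(|g|^{\delta})\big)^{1/\delta}$, and let $M_{\alpha,s}$ denote the $L^{s}$-averaged fractional maximal operator, so that $M_{\alpha,s}f\to M_{\alpha}f$ as $s\to 1^{+}$.

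First I would record the pointwise estimate: for $b\in BMO(\Rn)$, $0<\delta<1$ and any $s>1$ with $\alpha s<n$,
\[
M^{\#}_{\delta}\big([b,I_{\alpha}]f\big)(x)\lesssim \|b\|_{\ast}\Big(M_{\delta}\big(I_{\alpha}f\big)(x)+M_{\alpha,s}f(x)\Big).
\]
To prove it I fix a ball $B=B(x_{0},r)$, use the identity $[b,I_{\alpha}]f=(b-b_{B})I_{\alpha}f-I_{\alpha}\big((b-b_{B})f\big)$, split $f=f\chi_{2B}+f\chi_{(2B)^{c}}$, and subtract a suitable constant (for instance $c_{B}=I_{\alpha}\big((b-b_{B})f\chi_{(2B)^{c}}\big)(x_{0})$). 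The oscillation of the first term is controlled by the generalized H\"older inequality (Theorem \ref{HolderOr}) together with the John--Nirenberg inequality (Remark \ref{rem2.4.}); the local part of the second term is handled by the weak type bound of $I_{\alpha}$ on the averaged scale; and the global part is estimated using the decay of the kernel $|x-y|^{\alpha-n}$ and \eqref{propBMO}.

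Second, I would invoke the Fefferman--Stein inequality in the Orlicz scale. Since $1<a_{\Psi}\le b_{\Psi}<\infty$ forces $\Psi\in\Delta_{2}\cap\nabla_{2}$ (Remark \ref{indorl} and Remark \ref{remlowup}), one has $\|g\|_{L_{\Psi}}\lesssim\|M^{\#}_{\delta}g\|_{L_{\Psi}}$ for small $\delta$, valid whenever $M_{\delta}g\in L_{\Psi}$. Applying this to $g=[b,I_{\alpha}]f$ --- after an approximation/truncation argument guaranteeing the a priori finiteness of $\|[b,I_{\alpha}]f\|_{L_{\Psi}}$ --- and inserting the pointwise estimate reduces matters to bounding $\|M_{\delta}(I_{\alpha}f)\|_{L_{\Psi}}$ and $\|M_{\alpha,s}f\|_{L_{\Psi}}$.

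Finally I bound these two terms. Because $\Psi\in\nabla_{2}$, the Hardy--Littlewood maximal operator, hence $M_{\delta}$, is bounded on $L_{\Psi}$; combined with $I_{\alpha}\colon L_{\Phi}\to L_{\Psi}$ from Theorem \ref{bounPotOrl} (whose hypotheses follow from the index conditions and the relation $\Psi^{-1}(t)=\Phi^{-1}(t)t^{-\alpha/n}$), this yields $\|M_{\delta}(I_{\alpha}f)\|_{L_{\Psi}}\lesssim\|I_{\alpha}f\|_{L_{\Psi}}\lesssim\|f\|_{L_{\Phi}}$. For the second term, since $1<a_{\Phi}$ and $1<a_{\Psi}$ leave room above the exponent $1$, one may fix $s>1$ close enough to $1$ that $M_{\alpha,s}$ still maps $L_{\Phi}$ into $L_{\Psi}$ by Cianchi's theorem (Theorem \ref{bounFrMaxOrl}), giving $\|M_{\alpha,s}f\|_{L_{\Psi}}\lesssim\|f\|_{L_{\Phi}}$. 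The main obstacle is precisely this last point together with the Orlicz Fefferman--Stein inequality: one must verify that the index gap above $1$ is wide enough to absorb the auxiliary $s$-power and to justify the sharp-maximal control a priori. The oscillation estimate of the first step, while the longest computation, is routine given the H\"older and John--Nirenberg inequalities already recalled.
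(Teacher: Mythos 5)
First, a point of comparison that matters: the paper does not prove Theorem \ref{TinOrlicz} at all. It is quoted from \cite{FuYangYuan} and used as a black box (its only role is to feed Theorem \ref{ggsgsd} and, through it, Lemma \ref{lem5.1.}). So there is no in-paper argument to measure your sketch against; all one can do is ask whether your proposal would stand on its own.

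Your strategy --- the pointwise bound $M^{\#}_{\delta}\big([b,I_{\alpha}]f\big)\lesssim\|b\|_{\ast}\big(M_{\delta}(I_{\alpha}f)+M_{\alpha,s}f\big)$ followed by an Orlicz-scale Fefferman--Stein inequality --- is the classical Chanillo-type route and is viable in outline. But the steps you yourself label as ``the main obstacle'' are exactly where the sketch is not yet a proof, and they are not routine. (i) You invoke Theorem \ref{bounPotOrl} for $I_{\alpha}\colon L_{\Phi}\to L_{\Psi}$ and Theorem \ref{bounFrMaxOrl} for the averaged operator, asserting that their hypotheses ``follow from the index conditions and the relation $\Psi^{-1}(t)=\Phi^{-1}(t)t^{-\alpha/n}$''. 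Those hypotheses are the conditions \eqref{condstrpot} and \eqref{condstrFrM}, which involve the auxiliary Young functions $\Phi_{n/\alpha}$ and $\Psi_{n/\alpha}$ defined by the integral formulas \eqref{chi2.13} and \eqref{chi2.3}; deriving the two global domination statements and the convergence of $\int_{0}^{1}\widetilde{\Phi}(t)t^{-1-n/(n-\alpha)}\,dt$ and $\int_{0}^{1}\Psi(t)t^{-1-n/(n-\alpha)}\,dt$ from $1<a_{\Phi}\le b_{\Phi}<\infty$ and $1<a_{\Psi}\le b_{\Psi}<\infty$ is a genuine computation, not a remark, and nothing recalled in the paper performs it. (ii) Theorem \ref{bounFrMaxOrl} concerns $M_{\alpha}$, not $M_{\alpha,s}$; to apply it you must pass to the pair $\big(\Phi(t^{1/s}),\Psi(t^{1/s})\big)$ with exponent $\alpha s$ and re-verify \eqref{condstrFrM} for that modified pair, and you must check that the index gap $a_{\Phi}>1$ really leaves room for such an $s$ uniformly in the admissible $\Phi$. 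Together with the unaddressed a priori finiteness needed to run the Fefferman--Stein step, these gaps mean the proposal is a plausible program rather than a complete proof; and since the paper treats the statement purely as an external citation, none of the missing verifications can be borrowed from it.
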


\begin{rem}\label{lurem3}
Note that, the operator $|b,I_{\alpha}|$ is bounded from $L^{\Phi}(\Rn)$ to $L^{\Psi}(\Rn)$ under the conditions of Theorem \ref{TinOrlicz}.
Indeed, the proof of this fact is similar to proof of Theorem \ref{TinOrlicz}.
\end{rem}

In \cite{FuYangYuan1} it was  proved that the commutator of the Hardy-Littlewood maximal operator $M_{b}$
with $b\in BMO(\Rn)$, is bounded in $L^\Phi(\Rn)$ for any Young function $\Phi$ with $1<a_{\Phi}\le b_{\Phi}<\infty$. This result together with the well known inequality
$ M_{\alpha, b}(f)(x)\lesssim |b,I_{\alpha}| (|f|)(x)$ and Remark \ref{lurem3}, imply the following theorem.

\begin{thm}\label{ggsgsd}
Let $0\le\alpha <n$ and $b, \Phi$ and $\Psi$ the same as in Theorem \ref{TinOrlicz}.  If $1<a_{\Phi}\le b_{\Phi}<\infty $ and $1<a_{\Psi}\le b_{\Psi}<\infty ,$ then $M_{b,\alpha}$ is bounded from $L^{\Phi}(\Rn)$ to $L^{\Psi}(\Rn)$.
\end{thm}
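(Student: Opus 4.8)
The plan is to derive the statement from results already present in the excerpt, namely the $L^{\Phi}\to L^{\Psi}$ boundedness of the commutator $|b,I_{\alpha}|$ recorded in Remark \ref{lurem3} together with the Hardy--Littlewood maximal commutator result of \cite{FuYangYuan1}, by exploiting a pointwise comparison between $M_{b,\alpha}$ and $|b,I_{\alpha}|$. Since the Riesz potential is only defined for $0<\alpha<n$, I would first isolate the degenerate value $\alpha=0$: there the relation $\Psi^{-1}(t)=\Phi^{-1}(t)t^{-\alpha/n}$ collapses to $\Psi=\Phi$, the operator $M_{b,0}$ is simply the commutator $M_b$ of the Hardy--Littlewood maximal operator, and its boundedness on $L^{\Phi}(\Rn)$ under the hypothesis $1<a_{\Phi}\le b_{\Phi}<\infty$ is exactly the theorem of \cite{FuYangYuan1} quoted above, so no further work is required in that case.

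For $0<\alpha<n$ the core of the argument is the pointwise inequality $M_{b,\alpha}f(x)\lesssim |b,I_{\alpha}|(|f|)(x)$. To prove it I would fix $t>0$ and rewrite a single averaged term using $|B(x,t)|=v_n t^n$; since every $y\in B(x,t)$ satisfies $|x-y|<t$ and $n-\alpha>0$, we have $t^{-(n-\alpha)}\le|x-y|^{-(n-\alpha)}$, so that
$$
|B(x,t)|^{-1+\frac{\alpha}{n}}\int_{B(x,t)}|b(x)-b(y)||f(y)|\,dy
= v_n^{\frac{\alpha}{n}-1}\,t^{-(n-\alpha)}\int_{B(x,t)}|b(x)-b(y)||f(y)|\,dy
\le v_n^{\frac{\alpha}{n}-1}\,|b,I_{\alpha}|(|f|)(x).
$$
The bound on the right is independent of $t$, so taking the supremum over $t>0$ yields the desired pointwise domination with an absolute constant depending only on $n$ and $\alpha$.

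Finally I would invoke Remark \ref{lurem3}, which provides the boundedness of $|b,I_{\alpha}|$ from $L^{\Phi}(\Rn)$ to $L^{\Psi}(\Rn)$ precisely under the hypotheses $1<a_{\Phi}\le b_{\Phi}<\infty$ and $1<a_{\Psi}\le b_{\Psi}<\infty$; combined with the pointwise inequality this gives $\|M_{b,\alpha}f\|_{L^{\Psi}}\lesssim\big\||b,I_{\alpha}|(|f|)\big\|_{L^{\Psi}}\lesssim\|f\|_{L^{\Phi}}$, which is the claim. The only genuine obstacle is the pointwise comparison step, where one must verify that the maximal-operator normalization $|B(x,t)|^{-1+\alpha/n}$ can be absorbed into the Riesz kernel uniformly in $t$; once this is checked, the conclusion follows purely by applying the cited Orlicz-space estimate for $|b,I_{\alpha}|$.
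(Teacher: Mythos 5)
Your proposal is correct and follows essentially the same route as the paper: the paper likewise derives the theorem from the boundedness of $M_b$ in \cite{FuYangYuan1} for the case $\alpha=0$, and from the pointwise inequality $M_{b,\alpha}(f)(x)\lesssim |b,I_{\alpha}|(|f|)(x)$ (which the paper cites as well known and you verify directly) combined with Remark \ref{lurem3} for $0<\alpha<n$. Your explicit check of the pointwise domination, absorbing $|B(x,t)|^{-1+\alpha/n}$ into the kernel via $t^{-(n-\alpha)}\le |x-y|^{-(n-\alpha)}$ for $y\in B(x,t)$, is exactly the standard argument the paper leaves implicit.
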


The following lemma is valid.
\begin{lem}\label{lem5.1.}
Let $0\le\a<n$ and $b\in BMO(\Rn)$. Let $\Phi$ be a Young function and $\Psi$ defined, via its inverse, by setting, for all $t\in(0,\i)$, $\Psi^{-1}(t):=\Phi^{-1}(t)t^{-\a/n}$ and $1<a_{\Phi}\le b_{\Phi}<\i$ and $1<a_{\Psi}\le b_{\Psi}<\i$,
then the inequality
\begin{equation*}\label{eq5.1.}
\|M_{b,\a}f\|_{L_\Psi(B(x_0,r))} \lesssim \|b\|_{*} \, \frac{1}{\Psi^{-1}\big(r^{-n}\big)}
 \sup_{t>2r} \Big(1+\ln \frac{t}{r}\Big) \Psi^{-1}\big(t^{-n}\big) \|f\|_{L_{\Phi}(B(x_0,t))}
\end{equation*}
holds for any ball $B(x_0,r)$ and for all $f\in L_{\Phi}^{\rm loc}(\Rn)$.

\end{lem}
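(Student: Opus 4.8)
The plan is to mirror the proof structure of Lemma~\ref{lem4.1} and Lemma~\ref{lem4.2.}: fix $B=B(x_0,r)$, write $f=f_1+f_2$ with $f_1=f\chi_{B(x_0,2r)}$ and $f_2=f\chi_{{\dual}B(x_0,2r)}$, and estimate the two pieces separately in
$$\|M_{b,\a}f\|_{L_\Psi(B)}\le \|M_{b,\a}f_1\|_{L_\Psi(B)}+\|M_{b,\a}f_2\|_{L_\Psi(B)}.$$
For the local part I would invoke Theorem~\ref{ggsgsd}, whose hypotheses $1<a_\Phi\le b_\Phi<\i$ and $1<a_\Psi\le b_\Psi<\i$ are exactly those assumed here, to obtain
$$\|M_{b,\a}f_1\|_{L_\Psi(B)}\le\|M_{b,\a}f_1\|_{L_\Psi(\Rn)}\lesssim \|b\|_\ast\,\|f\|_{L_\Phi(B(x_0,2r))}.$$
Exactly as in the estimate \eqref{ves2F}, evaluating the supremum as $t\to 2r^+$ and using the doubling of $\Psi^{-1}$ shows that the right-hand side of the claimed inequality dominates $\|b\|_\ast\|f\|_{L_\Phi(B(x_0,2r))}$, so the local term is absorbed.

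For the global part I would fix $y\in B$ and repeat verbatim the geometric argument of Lemma~\ref{lem4.1}: only radii $t>r$ contribute to $M_{b,\a}f_2(y)$, and $B(y,t)\cap{\dual}B(x_0,2r)\subset B(x_0,2t)$. After comparing $|B(y,t)|$ with $|B(x_0,2t)|$ and rescaling $t\mapsto t/2$ this gives the pointwise bound
$$M_{b,\a}f_2(y)\lesssim \sup_{t>2r}|B(x_0,t)|^{-1+\frac{\a}{n}}\int_{B(x_0,t)}|b(y)-b(z)|\,|f(z)|\,dz.$$
Splitting $|b(y)-b(z)|\le|b(y)-b_{B(x_0,t)}|+|b_{B(x_0,t)}-b(z)|$ decomposes the right-hand side into a $y$-dependent term $J_1(y)$, carrying the first factor, and a $y$-independent term $J_2$, carrying the second.

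To handle $J_1(y)$ I would further write $|b(y)-b_{B(x_0,t)}|\le|b(y)-b_{B(x_0,r)}|+|b_{B(x_0,r)}-b_{B(x_0,t)}|$ and bound the second difference by $C\|b\|_\ast(1+\ln\frac{t}{r})$ via \eqref{propBMO}. For the spatial averages, Lemma~\ref{lemHold} together with the defining relation $\Psi^{-1}(t)=\Phi^{-1}(t)t^{-\a/n}$, which yields the key identity $t^{\a}\Phi^{-1}(t^{-n})=\Psi^{-1}(t^{-n})$, gives
$$|B(x_0,t)|^{-1+\frac{\a}{n}}\int_{B(x_0,t)}|f(z)|\,dz\lesssim \Psi^{-1}(t^{-n})\,\|f\|_{L_\Phi(B(x_0,t))}.$$
Taking the $L_\Psi(B)$ norm, the $(1+\ln\frac{t}{r})$-piece (constant in $y$) is multiplied by $\|\chi_B\|_{L_\Psi}\thickapprox 1/\Psi^{-1}(r^{-n})$ through Lemma~\ref{lem4.0}, while the factor $|b(\cdot)-b_{B(x_0,r)}|$ is absorbed by Lemma~\ref{Bmo-orlicz} applied to $\Psi$, i.e. $\|b(\cdot)-b_{B(x_0,r)}\|_{L_\Psi(B)}\lesssim \|b\|_\ast/\Psi^{-1}(r^{-n})$; both contributions are bounded by the asserted right-hand side. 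For $J_2$ I would apply the Orlicz H\"older inequality (Theorem~\ref{HolderOr}), then Lemma~\ref{Bmo-orlicz} for the complementary function $\widetilde\Phi$ (legitimate since $\Phi\in\Delta_2\cap\nabla_2$ forces $\widetilde\Phi\in\Delta_2\cap\nabla_2$, hence $1<a_{\widetilde\Phi}\le b_{\widetilde\Phi}<\i$), and finally \eqref{2.3} to turn $1/\widetilde\Phi^{-1}(t^{-n})$ into $t^{n}\Phi^{-1}(t^{-n})$; this again reduces $J_2$ to $\|b\|_\ast\sup_{t>2r}\Psi^{-1}(t^{-n})\|f\|_{L_\Phi(B(x_0,t))}$, whose $L_\Psi(B)$ norm contributes the factor $\|\chi_B\|_{L_\Psi}\thickapprox 1/\Psi^{-1}(r^{-n})$.

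The main obstacle is the bookkeeping in $J_1$ and $J_2$: one must route the two BMO factors into two \emph{different} Orlicz BMO estimates, namely Lemma~\ref{Bmo-orlicz} for $\Psi$ in $J_1$ and for $\widetilde\Phi$ in $J_2$, and repeatedly use the identity $t^{\a}\Phi^{-1}(t^{-n})=\Psi^{-1}(t^{-n})$ together with \eqref{2.3}, so that every term collapses to the single model quantity $\Psi^{-1}(t^{-n})\|f\|_{L_\Phi(B(x_0,t))}$ before the supremum over $t>2r$ is taken; verifying that the $\nabla_2$ hypotheses are precisely what make both BMO estimates available is the delicate point.
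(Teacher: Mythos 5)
Your proposal is correct and follows essentially the same route as the paper: split $f=f_1+f_2$, handle $f_1$ by the $L_\Phi\to L_\Psi$ boundedness of $M_{b,\a}$ and absorb it via \eqref{ves2F}, derive the pointwise bound \eqref{gulsuk6.3} for $f_2$, and then split the BMO factor so that the outer-variable piece is handled by Lemma \ref{Bmo-orlicz} for $\Psi$, the integration-variable piece by Orlicz--H\"older plus Lemma \ref{Bmo-orlicz} for $\widetilde\Phi$ and \eqref{2.3}, and the difference of averages by \eqref{propBMO}. The only cosmetic difference is that you subtract $b_{B(x_0,t)}$ first where the paper subtracts $b_{B(x_0,r)}$ first, which merely permutes the same three estimates.
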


\begin{proof}
For arbitrary $x_0 \in\Rn$, set $B=B(x_0,r)$ for the ball
centered at $x_0$ and of radius $r$. Write $f=f_1+f_2$ with
$f_1=f\chi_{_{2B}}$ and $f_2=f\chi_{_{\dual (2B)}}$. Hence
$$
\left\|M_{b,\a}f\right\|_{L_\Psi(B)} \leq
\left\|M_{b,\a}f_1 \right\|_{L_\Psi(B)}+
\left\|M_{b,\a}f_2 \right\|_{L_\Psi(B)}.
$$
From the boundedness of $M_{b,\a}$ from $L_{\Phi}(\Rn)$ to $L_{\Psi}(\Rn)$  
it follows that
\begin{align*}
\|M_{b,\a}f_1\|_{L_\Psi(B)} & \leq
\|M_{b,\a}f_1\|_{L_\Psi(\Rn)}
\\
& \lesssim  \|b\|_{*} \, \|f_1\|_{L_\Phi(\Rn)} = \|b\|_{*} \, \|f\|_{L_\Phi(2B)}.
\end{align*}

For $x \in B$ we have
\begin{align*}
M_{b,\a}f_2(x) & =\sup_{t>0} \frac{1}{|B(x,t)|^{1-\frac{\a}{n}}} \int_{B(x,t)}|b(y)-b(x)||f_2(y)|dy
\\
&=\sup_{t>0} \frac{1}{|B(x,t)|^{1-\frac{\a}{n}}}\int_{B(x,t)\cap\dual (2B)}|b(y)-b(x)||f(y)|dy.
\end{align*}

Let $x$ be an arbitrary point from $B$. If $B(x,t)\cap\{\dual (2B)\}\neq\emptyset,$ then $t>r$. Indeed, if $y\in B(x,t)\cap  \{\dual (2B)\},$
then $t > |x-y| \geq |x_0-y|-|x_0-x|>2r-r=r$.

On the other hand, $B(x,t)\cap  \{\dual (2B)\}\subset B(x_0,2t)$. Indeed, if  $y\in B(x,t)\cap  \{\dual (2B)\},$ then
we get $|x_0-y|\leq |x-y|+|x_0-x|<t+r<2t$.

Hence
\begin{align*}
M_{b,\a}(f_2)(x) & = \sup_{t>0} \frac{1}{|B(x,t)|^{1-\frac{\a}{n}}}\int_{B(x,t)\cap\dual (2B)}|b(y)-b(x)||f(y)|dy
\\
&\leq 2^{n-\a}\sup_{t>r} \frac{1}{|B(x_0,2t)|^{1-\frac{\a}{n}}}\int_{B(x_0,2t)}|b(y)-b(x)||f(y)|dy
\\
&=2^{n-\a}\sup_{t>2r} \frac{1}{|B(x_0,t)|^{1-\frac{\a}{n}}}\int_{B(x_0,t)}|b(y)-b(x)||f(y)|dy
\end{align*}

Therefore, for all $x \in B$ we have

\begin{equation}\label{gulsuk6.3}
M_{b,\a}(f_2)(x) \leq 2^{n-\a}\sup_{t>2r} \frac{1}{|B(x_0,t)|^{1-\frac{\a}{n}}}\int_{B(x_0,t)}|b(y)-b(x)||f(y)|dy.
\end{equation}

Then
\begin{align*}
\|M_{b,\a}f_2\|_{L_\Psi(B)} & \lesssim
\left\|\sup_{t>2r} \frac{1}{|B(x_0,t)|^{1-\frac{\a}{n}}}\int_{B(x_0,t)}|b(y)-b(\cdot)||f(y)|dy\right\|_{L_\Psi(B)}
\\
&\lesssim \left\|\sup_{t>2r} \frac{1}{|B(x_0,t)|^{1-\frac{\a}{n}}}\int_{B(x_0,t)}|b(y)-b_B||f(y)|dy\right\|_{L_\Psi(B)}
\\
&\quad +\left\|\sup_{t>2r} \frac{1}{|B(x_0,t)|^{1-\frac{\a}{n}}}\int_{B(x_0,t)}|b(\cdot)-b_B||f(y)|dy\right\|_{L_\Psi(B)}
\\
&=J_1+J_2.
\end{align*}
Let us estimate $J_1$.
\begin{align*}
J_1 & = \frac{1}{\Psi^{-1}\big(r^{-n}\big)}\sup_{t>2r} \frac{1}{|B(x_0,t)|^{1-\frac{\a}{n}}}\int_{B(x_0,t)}|b(y)-b_B||f(y)|dy
\\
&\thickapprox \frac{1}{\Psi^{-1}\big(r^{-n}\big)}\sup_{t>2r} t^{\a-n}\int_{B(x_0,t)}|b(y)-b_B||f(y)|dy.
\end{align*}

Applying H\"older's inequality, by Lemma \ref{Bmo-orlicz} and \eqref{propBMO}  we get
\allowdisplaybreaks
\begin{align*}
J_1 & \lesssim \frac{1}{\Psi^{-1}\big(r^{-n}\big)}\sup_{t>2r} t^{\a-n}\int_{B(x_0,t)}|b(y)-b_{B(x_0,t)}||f(y)|dy
\\
&\quad + \frac{1}{\Psi^{-1}\big(r^{-n}\big)}\sup_{t>2r} t^{\a-n}|b_{B(x_0,r)}-b_{B(x_0,t)}|\int_{B(x_0,t)}|f(y)|dy
\\
&
\lesssim\frac{1}{\Psi^{-1}\big(r^{-n}\big)}\sup_{t>2r} t^{\a-n}\left\|b(\cdot)-b_{B(x_0,t)}\right\|_{L_{\widetilde{\Phi}}(B(x_0,t))} \|f\|_{L_\Phi(B(x_0,t))}
\\
& \quad + \frac{1}{\Psi^{-1}\big(r^{-n}\big)} \sup_{t>2r} t^{\a-n}|b_{B(x_0,r)}-b_{B(x_0,t)}|t^n\Phi^{-1}\big(t^{-n}\big)\|f\|_{L_\Phi(B(x_0,t))}
\\
& \lesssim \|b\|_{*}\,\frac{1}{\Psi^{-1}\big(r^{-n}\big)}
\sup_{t>2r}\Psi^{-1}\big(t^{-n}\big)\Big(1+\ln \frac{t}{r}\Big)
\|f\|_{L_\Phi(B(x_0,t))}.
\end{align*}
In order to estimate $J_2$ note that
\begin{align*}
J_2 & \thickapprox
\left\|b(\cdot)-b_{B}\right\|_{L_\Psi(B)}\sup_{t>2r} t^{\a-n}\int_{B(x_0,t)}|f(y)|dy
\\
&\lesssim \|b\|_{*}\,\frac{1}{\Psi^{-1}\big(r^{-n}\big)}\sup_{t>2r}\Psi^{-1}\big(t^{-n}\big)\|f\|_{L_\Phi(B(x_0,t))}
\end{align*}
Summing up $J_1$ and $J_2$ we get
\begin{equation} \label{deckfV}
\|M_{b,\a}f_2\|_{L_\Psi(B)}
\lesssim \|b\|_{*}\,\frac{1}{\Psi^{-1}\big(r^{-n}\big)}
\sup_{t>2r}\Psi^{-1}\big(t^{-n}\big)\Big(1+\ln \frac{t}{r}\Big)
\|f\|_{L_\Phi(B(x_0,t))}.
\end{equation}
Finally,
$$
\|M_{b,\a}f\|_{L_\Psi(B)}\lesssim \|b\|_{*}\,\|f\|_{L_\Phi(2B)}+
\|b\|_{*}\,\frac{1}{\Psi^{-1}\big(r^{-n}\big)}
\sup_{t>2r}\Psi^{-1}\big(t^{-n}\big)\Big(1+\ln \frac{t}{r}\Big)
\|f\|_{L_\Phi(B(x_0,t))},
$$
and the statement of Lemma \ref{lem5.1.} follows by \eqref{ves2F}.
%
\end{proof}
\begin{thm}\label{3.4.Xcom}
Let $0\le\a<n$ and $b\in BMO(\Rn)$. Let also $\Phi$ be a Young function and $\Psi$ defined, via its inverse, by setting, for all $t\in(0,\i)$, $\Psi^{-1}(t):=\Phi^{-1}(t)t^{-\a/n}$ and $1<a_{\Phi}\le b_{\Phi}<\i$ and $1<a_{\Psi}\le b_{\Psi}<\i$, $(\varphi_1,\varphi_2)$ and $(\Phi, \Psi)$ satisfy the condition
\begin{equation}\label{eq3.6.VZfrMaxcom}
\sup_{r<t<\infty} \Big(1+\ln \frac{t}{r}\Big) \Psi^{-1}\big(t^{-n}\big) \es_{t<s<\i}\frac{\varphi_1(x,s)}{\Phi^{-1}\big(s^{-n}\big)} \le C \, \varphi_2(x,r),
\end{equation}
where $C$ does not depend on $x$ and $r$.
Then  the operator $M_{b,\a}$ is bounded from $M_{\Phi,\varphi_1}(\Rn)$ to
$M_{\Psi,\varphi_2}(\Rn)$. Moreover
$$\|M_{b,\a} f\|_{M_{\Psi,\varphi_2}} \le  \|b\|_{*}\|f\|_{M_{\Phi,\varphi_1}}.$$
%
\end{thm}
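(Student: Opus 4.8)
The plan is to deduce the global bound from the local estimate of Lemma \ref{lem5.1.} in the same spirit as Theorem \ref{thm4.4.FrMax} was deduced from Lemma \ref{lem4.2.}, the only genuinely new feature being the logarithmic factor $1+\ln(t/r)$ that now sits inside the supremum. First I would fix $x_0\in\Rn$ and $r>0$, set $B=B(x_0,r)$, and multiply the conclusion of Lemma \ref{lem5.1.} by $\Psi^{-1}\big(r^{-n}\big)$ to obtain
\[
\Psi^{-1}\big(r^{-n}\big)\,\|M_{b,\a}f\|_{L_\Psi(B)} \lesssim \|b\|_{*}\,\sup_{t>2r}\Big(1+\ln\tfrac{t}{r}\Big)\Psi^{-1}\big(t^{-n}\big)\,\|f\|_{L_{\Phi}(B(x_0,t))}.
\]
The task is then to convert the right-hand side into $\|b\|_{*}\,\|f\|_{M_{\Phi,\varphi_1}}\,\varphi_2(x_0,r)$ using hypothesis \eqref{eq3.6.VZfrMaxcom}.

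The key step is to control $\|f\|_{L_{\Phi}(B(x_0,t))}$ by the $M_{\Phi,\varphi_1}$-quasinorm in a manner compatible with the essential infimum $\es_{t<s<\i}$ appearing in \eqref{eq3.6.VZfrMaxcom}. Here I would exploit the monotonicity of the map $t\mapsto\|f\|_{L_{\Phi}(B(x_0,t))}$: since $B(x_0,t)\subset B(x_0,\tau)$ for $\tau>t$, the Orlicz norm is non-decreasing in the radius, so for every $\tau>t$,
\[
\|f\|_{L_{\Phi}(B(x_0,t))}\le\|f\|_{L_{\Phi}(B(x_0,\tau))}\le \|f\|_{M_{\Phi,\varphi_1}}\,\frac{\varphi_1(x_0,\tau)}{\Phi^{-1}\big(\tau^{-n}\big)}.
\]
Since the left-hand side does not depend on $\tau$, passing to the essential infimum over $\tau\in(t,\i)$ yields $\|f\|_{L_{\Phi}(B(x_0,t))}\le \|f\|_{M_{\Phi,\varphi_1}}\,\es_{t<s<\i}\frac{\varphi_1(x_0,s)}{\Phi^{-1}(s^{-n})}$. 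This monotonicity reduction is exactly the mechanism underlying Theorem \ref{thm5.1} on the cone $\mathcal A$; I would state it directly rather than cite that theorem as a black box, because the coupling of $t$ and $r$ in $1+\ln(t/r)$ prevents writing the operator in the pure supremal form $\overline{S}_u$ with $u$ independent of $r$.

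Inserting this estimate and using $\{t:t>2r\}\subset\{t:t>r\}$, the supremum is dominated by
\[
\|b\|_{*}\,\|f\|_{M_{\Phi,\varphi_1}}\,\sup_{r<t<\i}\Big(1+\ln\tfrac{t}{r}\Big)\Psi^{-1}\big(t^{-n}\big)\,\es_{t<s<\i}\frac{\varphi_1(x_0,s)}{\Phi^{-1}\big(s^{-n}\big)},
\]
which by hypothesis \eqref{eq3.6.VZfrMaxcom} is at most $C\,\|b\|_{*}\,\|f\|_{M_{\Phi,\varphi_1}}\,\varphi_2(x_0,r)$. Dividing through by $\varphi_2(x_0,r)$ and taking the supremum over $x_0\in\Rn$ and $r>0$ produces $\|M_{b,\a}f\|_{M_{\Psi,\varphi_2}}\lesssim\|b\|_{*}\,\|f\|_{M_{\Phi,\varphi_1}}$, which is the assertion.

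The only real obstacle is the logarithmic weight: because $1+\ln(t/r)$ does not factor across $t$ and $r$, the reduction to the abstract supremal-operator bound of Theorem \ref{thm5.1} used for $M_\a$ is no longer available verbatim, and one must run the monotonicity argument by hand. This is, however, precisely what condition \eqref{eq3.6.VZfrMaxcom} is built to absorb, so once $\|f\|_{L_{\Phi}(B(x_0,t))}$ is replaced by the essential-infimum weight the verification is immediate. I would also record that no hypotheses beyond those already in the statement are needed, since Lemma \ref{lem5.1.} itself invoked the Orlicz boundedness of $M_{b,\a}$ from Theorem \ref{ggsgsd} under $1<a_\Phi\le b_\Phi<\i$ and $1<a_\Psi\le b_\Psi<\i$.
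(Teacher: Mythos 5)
Your proposal is correct and follows essentially the same route as the paper, which simply states that the theorem ``follows by Lemma \ref{lem5.1.} and Theorem \ref{thm5.1} in the same manner as in the proof of Theorem \ref{thm4.4.FrMax}.'' Your additional observation---that the factor $1+\ln(t/r)$ couples $t$ and $r$ so that Theorem \ref{thm5.1} cannot be invoked verbatim with a single weight $u$, and that one should instead run the underlying monotonicity argument (bounding $\|f\|_{L_{\Phi}(B(x_0,t))}$ by the essential infimum via the non-decreasing character of $t\mapsto\|f\|_{L_{\Phi}(B(x_0,t))}$) directly---is a fair and correct repair of a detail the paper glosses over, but it does not change the substance of the argument.
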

\begin{proof}
The statement of Theorem \ref{3.4.Xcom} follows by Lemma \ref{lem5.1.} and Theorem
\ref{thm5.1} in the same manner as in the proof of Theorem
\ref{thm4.4.FrMax}.
\end{proof}

If we take $\a=0$ at Theorem \ref{3.4.Xcom} we get the following new result for the commutator of Hardy-Littlewood maximal operator $M_{b}$.
\begin{cor}
Let $b\in BMO(\Rn)$, $\Phi$ be a Young function with $1<a_{\Phi}\le b_{\Phi}<\i$, $(\varphi_1,\varphi_2)$ and $\Phi$ satisfy the condition
\begin{equation}\label{eq3.6.VZMaxcom}
\sup_{r<t<\infty} \Big(1+\ln \frac{t}{r}\Big) \Phi^{-1}\big(t^{-n}\big) \es_{t<s<\i}\frac{\varphi_1(x,s)}{\Phi^{-1}\big(s^{-n}\big)} \le C \, \varphi_2(x,r),
\end{equation}
where $C$ does not depend on $x$ and $r$.
Then  the operator $M_{b}$ is bounded from $M_{\Phi,\varphi_1}(\Rn)$ to
$M_{\Phi,\varphi_2}(\Rn)$. Moreover
$$\|M_{b} f\|_{M_{\Phi,\varphi_2}} \le  \|b\|_{*}\|f\|_{M_{\Phi,\varphi_1}}.$$
\end{cor}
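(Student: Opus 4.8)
The plan is to derive this corollary directly from Theorem~\ref{3.4.Xcom} by specializing to the endpoint $\a=0$, so that no argument beyond a careful bookkeeping of the hypotheses is needed. First I would set $\a=0$ throughout Theorem~\ref{3.4.Xcom}. The defining relation $\Psi^{-1}(t)=\Phi^{-1}(t)t^{-\a/n}$ then collapses to $\Psi^{-1}(t)=\Phi^{-1}(t)$ for all $t\in(0,\i)$, so that $\Psi=\Phi$. In particular the two index conditions $1<a_{\Phi}\le b_{\Phi}<\i$ and $1<a_{\Psi}\le b_{\Psi}<\i$ become one and the same, namely the single assumption $1<a_{\Phi}\le b_{\Phi}<\i$ imposed in the corollary, and the target space $M_{\Psi,\varphi_2}(\Rn)$ is $M_{\Phi,\varphi_2}(\Rn)$.

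Next I would identify the operator. By definition $M_{b,\a}$ with $\a=0$ is
$$
M_{b,0}(f)(x)=\sup_{t>0}|B(x,t)|^{-1}\int_{B(x,t)}|b(x)-b(y)||f(y)|dy,
$$
which is precisely the commutator $M_b$ of the Hardy--Littlewood maximal operator. Moreover, once $\Psi=\Phi$, the supremal hypothesis \eqref{eq3.6.VZfrMaxcom} is verbatim the hypothesis \eqref{eq3.6.VZMaxcom} of the corollary. Thus all the hypotheses of Theorem~\ref{3.4.Xcom} are met, and its conclusion yields both the boundedness of $M_b$ from $M_{\Phi,\varphi_1}(\Rn)$ to $M_{\Phi,\varphi_2}(\Rn)$ and the quantitative bound $\|M_b f\|_{M_{\Phi,\varphi_2}}\lesssim\|b\|_{*}\|f\|_{M_{\Phi,\varphi_1}}$.

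The only point that genuinely requires checking is that the chain of results feeding Theorem~\ref{3.4.Xcom} remains valid at $\a=0$, and this is immediate rather than an obstacle: Theorem~\ref{ggsgsd}, Lemma~\ref{lem5.1.} and Theorem~\ref{3.4.Xcom} are all stated for the full range $0\le\a<n$, so the endpoint $\a=0$ is already included. In particular, the boundedness of $M_{b,0}=M_b$ on $L_{\Phi}(\Rn)$ used to control the local term $\|M_{b,\a}f_1\|_{L_\Psi(B)}$ inside Lemma~\ref{lem5.1.} is exactly the $\a=0$ instance of Theorem~\ref{ggsgsd}. Hence the corollary is a pure specialization, and there is no substantial difficulty to overcome.
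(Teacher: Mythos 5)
Your proposal is correct and follows exactly the paper's own route: the authors obtain this corollary precisely by setting $\a=0$ in Theorem \ref{3.4.Xcom}, under which $\Psi^{-1}(t)=\Phi^{-1}(t)$ forces $\Psi=\Phi$ and condition \eqref{eq3.6.VZfrMaxcom} reduces to \eqref{eq3.6.VZMaxcom}. Your additional check that the supporting results (Theorem \ref{ggsgsd}, Lemma \ref{lem5.1.}) are all stated for the full range $0\le\a<n$ is a sensible verification but introduces nothing beyond what the paper implicitly relies on.
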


Note that analogue of the Theorem \ref{3.4.Xcom} for the commutator of the Riesz potential $[b,I_{\a}]$ proved in \cite{GulDerJFSA} as follows.
\begin{thm}\label{3.4.XcomT}
Let $0<\a<n$ and $b\in BMO(\Rn)$. Let $\Phi$ be a Young function and $\Psi$ defined, via its inverse, by setting, for all $t\in(0,\i)$, $\Psi^{-1}(t):=\Phi^{-1}(t)t^{-\a/n}$ and $1<a_{\Phi}\le b_{\Phi}<\i$ and $1<a_{\Psi}\le b_{\Psi}<\i$.  $(\varphi_1,\varphi_2)$ and $(\Phi, \Psi)$ satisfy the condition
\begin{equation}\label{eq3.6.VZpotcom}
\int_{r}^{\i}\Big(1+\ln \frac{t}{r}\Big) \es_{t<s<\i}\frac{\varphi_1(x,s)}{\Phi^{-1}\big(s^{-n}\big)}\Psi^{-1}\big(t^{-n}\big)\frac{dt}{t}  \le C \, \varphi_2(x,r),
\end{equation}
where $C$ does not depend on $x$ and $r$.

Then  the operator $[b,I_{\a}]$ is bounded from $M_{\Phi,\varphi_1}(\Rn)$ to
$M_{\Psi,\varphi_2}(\Rn)$. Moreover
$$\|[b,I_{\a}] f\|_{M_{\Psi,\varphi_2}} \le  \|b\|_{*}\|f\|_{M_{\Phi,\varphi_1}}.$$
%
\end{thm}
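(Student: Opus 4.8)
The plan is to follow the same two–stage scheme that produced Theorem \ref{3.4.Xcom} for $M_{b,\a}$: first prove a local estimate for $[b,I_\a]$ on an arbitrary ball, and then lift it to the generalized Orlicz--Morrey norm using condition \eqref{eq3.6.VZpotcom}. The only structural change from the maximal case is that the Riesz kernel integrates over dyadic annuli rather than giving a supremum, so (exactly as Lemma \ref{lemGultecnOrl} is the integral analogue of Lemma \ref{lem4.2.}) the target local estimate will read
\begin{equation*}
\|[b,I_\a]f\|_{L_\Psi(B(x_0,r))} \lesssim \|b\|_{*}\,\frac{1}{\Psi^{-1}\big(r^{-n}\big)}\int_{2r}^{\i}\Big(1+\ln\frac{t}{r}\Big)\,\Psi^{-1}\big(t^{-n}\big)\,\|f\|_{L_{\Phi}(B(x_0,t))}\,\frac{dt}{t}.
\end{equation*}

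To establish this I would fix $B=B(x_0,r)$, write $f=f_1+f_2$ with $f_1=f\chi_{_{2B}}$, $f_2=f\chi_{_{\dual(2B)}}$, and split $\|[b,I_\a]f\|_{L_\Psi(B)}\le\|[b,I_\a]f_1\|_{L_\Psi(B)}+\||b,I_\a|f_2\|_{L_\Psi(B)}$. The near part is controlled at once by the $L_\Phi\to L_\Psi$ boundedness of the commutator (Theorem \ref{TinOrlicz} together with Remark \ref{lurem3}), giving $\|[b,I_\a]f_1\|_{L_\Psi(B)}\lesssim\|b\|_{*}\|f\|_{L_\Phi(2B)}$; this is absorbed into the $t\thickapprox 2r$ end of the integral because $\int_{2r}^{\i}\Psi^{-1}(t^{-n})\frac{dt}{t}\thickapprox\Psi^{-1}(r^{-n})$ and $\|f\|_{L_\Phi(2B)}\le\|f\|_{L_\Phi(B(x_0,t))}$ for $t\ge 2r$. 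For the far part I would use that $|x-y|\thickapprox|x_0-y|$ when $x\in B$, $y\in\dual(2B)$, so that $|b,I_\a|f_2(x)\lesssim\int_{\dual(2B)}\frac{|b(x)-b(y)|}{|x_0-y|^{n-\a}}|f(y)|\,dy$, and split $|b(x)-b(y)|\le|b(x)-b_B|+|b(y)-b_B|$.

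The term with $|b(x)-b_B|$ factors out of the $y$-integral; taking the $L_\Psi(B)$ norm, Lemma \ref{Bmo-orlicz} gives $\|b(\cdot)-b_B\|_{L_\Psi(B)}\thickapprox\|b\|_{*}/\Psi^{-1}(r^{-n})$, while the remaining potential is dominated by $\int_{2r}^{\i}\Psi^{-1}(t^{-n})\|f\|_{L_\Phi(B(x_0,t))}\frac{dt}{t}$ just as in the proof of Lemma \ref{lemGultecnOrl}. For the term with $|b(y)-b_B|$ I would decompose $\dual(2B)$ into annuli, replace $b_B$ by $b_{B(x_0,t)}$ on each annulus, and bound the discrepancy $|b_B-b_{B(x_0,t)}|$ by $C\|b\|_{*}(1+\ln\frac tr)$ via \eqref{propBMO}; the local H\"older inequality of Lemma \ref{lemHold} applied with $\widetilde{\Phi}$, combined with Lemma \ref{Bmo-orlicz}, then yields $\|b\|_{*}(1+\ln\frac tr)\Psi^{-1}(t^{-n})\|f\|_{L_\Phi(B(x_0,t))}$ on each annulus. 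Re-summing the annuli into an integral gives the displayed local estimate. Keeping the logarithmic $BMO$ factor correctly aligned with the weight $\Psi^{-1}(t^{-n})$ across this annular bookkeeping is the main technical obstacle.

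With the local estimate in hand, the passage to the Morrey norm is routine and mirrors the end of the proof of Theorem \ref{thm4.4.FrMax}. Setting $g(t)=\|f\|_{L_\Phi(B(x,t))}$, which is non-decreasing, the definition of $\|f\|_{M_{\Phi,\varphi_1}}$ gives $g(s)\le\frac{\varphi_1(x,s)}{\Phi^{-1}(s^{-n})}\|f\|_{M_{\Phi,\varphi_1}}$ for every $s$; since this bound holds for all $s>t$ and $g(t)\le g(s)$, taking the essential infimum over $s$ yields $g(t)\le\big(\es_{t<s<\i}\frac{\varphi_1(x,s)}{\Phi^{-1}(s^{-n})}\big)\|f\|_{M_{\Phi,\varphi_1}}$. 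Substituting into the local estimate and invoking \eqref{eq3.6.VZpotcom},
\begin{align*}
\Psi^{-1}\big(r^{-n}\big)\,\|[b,I_\a]f\|_{L_\Psi(B(x,r))} & \lesssim \|b\|_{*}\,\|f\|_{M_{\Phi,\varphi_1}}\int_{2r}^{\i}\Big(1+\ln\frac{t}{r}\Big)\Psi^{-1}\big(t^{-n}\big)\es_{t<s<\i}\frac{\varphi_1(x,s)}{\Phi^{-1}\big(s^{-n}\big)}\frac{dt}{t}
\\
& \lesssim \|b\|_{*}\,\|f\|_{M_{\Phi,\varphi_1}}\,\varphi_2(x,r).
\end{align*}
Dividing by $\varphi_2(x,r)$ and taking the supremum over $x\in\Rn$ and $r>0$ gives $\|[b,I_\a]f\|_{M_{\Psi,\varphi_2}}\lesssim\|b\|_{*}\|f\|_{M_{\Phi,\varphi_1}}$, which is the assertion.
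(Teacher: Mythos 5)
Your proposal is correct and follows essentially the same route as the intended proof: the paper only cites \cite{GulDerJFSA} for this theorem, but the argument there is exactly the integral analogue of Lemma \ref{lem5.1.} and Theorem \ref{3.4.Xcom} that you describe — a local estimate carrying the factor $\big(1+\ln \frac{t}{r}\big)$ obtained by splitting $|b(x)-b(y)|\le|b(x)-b_B|+|b(y)-b_B|$ and using \eqref{propBMO}, Lemma \ref{Bmo-orlicz}, Lemma \ref{lemHold} and Theorem \ref{TinOrlicz}, followed by the monotonicity/supremal argument with condition \eqref{eq3.6.VZpotcom}. The auxiliary facts you invoke (absorption of the near part via $\int_{2r}^{\i}\Psi^{-1}(t^{-n})\frac{dt}{t}\gtrsim\Psi^{-1}(r^{-n})$, the equivalence $|x-y|\thickapprox|x_0-y|$ on $\dual(2B)$, and the bound of $\|f\|_{L_{\Phi}(B(x,t))}$ by $\es_{t<s<\i}\frac{\varphi_1(x,s)}{\Phi^{-1}(s^{-n})}\|f\|_{M_{\Phi,\varphi_1}}$) are all justified under the hypotheses $1<a_{\Phi}\le b_{\Phi}<\i$ and $1<a_{\Psi}\le b_{\Psi}<\i$.
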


\begin{rem}
The condition \eqref{eq3.6.VZfrMaxcom} is weaker than \eqref{eq3.6.VZpotcom}. See Remark \ref{supzyg} for details.
\end{rem}

If we take $\Phi(t)=t^{p},\,\Psi(t)=t^{q},\,1< p,q<\i$ at Theorem \ref{3.4.Xcom} we get the Spanne-Guliyev type result which was proved at \cite{GulShu}.
\begin{cor}
Let $0\le\a<n$, $1< p < \frac{n}{\a}$, $\frac{1}{q}=\frac{1}{p}-\frac{\a}{n}$, and $(\varphi_1,\varphi_2)$ satisfy the condition
\begin{equation}\label{eq4.6.GSfrMax}
\sup_{r<t<\infty} \Big(1+\ln \frac{t}{r}\Big) \frac{\es_{t<s<\i}\varphi_{1}(x,s)s^{\frac{n}{p}}}{t^{\frac{n}{q}}}\le C\varphi_{2}(x,r),
\end{equation}
where $C$ does not depend on $x$ and $r$. Then $M_{b,\a}$ is bounded from $M_{p,\varphi_1}(\Rn)$ to $M_{q,\varphi_2}(\Rn)$.
\end{cor}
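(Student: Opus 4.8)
The plan is to obtain this corollary as a direct specialization of Theorem \ref{3.4.Xcom} to the power Young functions $\Phi(t)=t^{p}$ and $\Psi(t)=t^{q}$. First I would check that these are admissible choices, that is, that the structural hypotheses on $\Phi$ and $\Psi$ in Theorem \ref{3.4.Xcom} hold. For $\Phi(t)=t^{p}$ one has $\Phi^{-1}(t)=t^{1/p}$ and $\frac{t\Phi'(t)}{\Phi(t)}=p$ identically, so $a_{\Phi}=b_{\Phi}=p$; the requirement $1<a_{\Phi}\le b_{\Phi}<\i$ then reduces to $1<p<\i$, which is guaranteed by the assumption $1<p<\frac{n}{\a}$. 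Moreover, under $\Phi(t)=t^{p}$ we have $M_{\Phi,\varphi}=M_{p,\varphi}$, as recorded in Section 3.

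Next I would verify that the relation $\Psi^{-1}(t)=\Phi^{-1}(t)\,t^{-\a/n}$ linking $\Phi$ and $\Psi$ in the theorem forces precisely $\Psi(t)=t^{q}$ with the exponent $q$ of the corollary. Indeed, $\Psi^{-1}(t)=t^{1/p}t^{-\a/n}=t^{1/p-\a/n}=t^{1/q}$ by the assumption $\frac{1}{q}=\frac{1}{p}-\frac{\a}{n}$, whence $\Psi(t)=t^{q}$ and $a_{\Psi}=b_{\Psi}=q$. The condition $1<a_{\Psi}\le b_{\Psi}<\i$ thus amounts to $1<q<\i$, and this holds because $0<\frac{1}{q}=\frac{1}{p}-\frac{\a}{n}\le\frac{1}{p}<1$.

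With $\Phi$ and $\Psi$ identified, the remaining step is to show that the abstract hypothesis \eqref{eq3.6.VZfrMaxcom} collapses to the concrete condition \eqref{eq4.6.GSfrMax}. Substituting $\Psi^{-1}\big(t^{-n}\big)=t^{-n/q}$ and $\Phi^{-1}\big(s^{-n}\big)=s^{-n/p}$ into \eqref{eq3.6.VZfrMaxcom} turns the factor $\Psi^{-1}\big(t^{-n}\big)$ into $t^{-n/q}$ and the quotient $\varphi_{1}(x,s)/\Phi^{-1}\big(s^{-n}\big)$ into $\varphi_{1}(x,s)\,s^{n/p}$, which reproduces \eqref{eq4.6.GSfrMax} verbatim. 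Since all hypotheses of Theorem \ref{3.4.Xcom} are then satisfied, its conclusion yields the boundedness of $M_{b,\a}$ from $M_{\Phi,\varphi_{1}}(\Rn)=M_{p,\varphi_{1}}(\Rn)$ to $M_{\Psi,\varphi_{2}}(\Rn)=M_{q,\varphi_{2}}(\Rn)$, as claimed.

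The argument has no genuine obstacle; it is purely a bookkeeping specialization. The only points requiring care are the elementary index computations, namely confirming $a_{\Phi}=b_{\Phi}=p$ and the exponent arithmetic that produces $\Psi(t)=t^{q}$ together with $q>1$, and checking that the degenerate case $\a=0$ (where $q=p$ and $\frac{n}{\a}=\i$) is consistent with the stated range $1<p<\frac{n}{\a}$.
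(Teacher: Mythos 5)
Your proposal is correct and coincides with the paper's own route: the paper obtains this corollary precisely by substituting $\Phi(t)=t^{p}$, $\Psi(t)=t^{q}$ into Theorem \ref{3.4.Xcom}, and your verification of $a_{\Phi}=b_{\Phi}=p$, $a_{\Psi}=b_{\Psi}=q$, the identity $\Psi^{-1}(t)=t^{1/p-\a/n}=t^{1/q}$, and the reduction of \eqref{eq3.6.VZfrMaxcom} to \eqref{eq4.6.GSfrMax} simply makes explicit the bookkeeping the paper leaves to the reader.
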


In the case $\varphi_1(x,r)=\frac{\Phi^{-1}\big(r^{-n}\big)}{\Phi^{-1}\big(r^{-\lambda_1}\big)}$, $\varphi_2(x,r)=\frac{\Psi^{-1}\big(r^{-n}\big)}{\Psi^{-1}\big(r^{-\lambda_2}\big)}$  from Theorem \ref{3.4.Xcom} we get the following Spanne type theorem for the boundedness of the operator $M_{b,\a}$ on Orlicz-Morrey spaces.
\begin{cor} \label{ggffddc}
Let $0\le\a<n$, $0 \le \lambda_1, \lambda_2 < n$ and $b\in BMO(\Rn)$. Let also $\Phi$ be a Young function and $\Psi$ defined, via its inverse, by setting, for all $t\in(0,\i)$, $\Psi^{-1}(t):=\Phi^{-1}(t)t^{-\a/n}$, $1<a_{\Phi}\le b_{\Phi}<\i$, $1<a_{\Psi}\le b_{\Psi}<\i$ and $(\Phi, \Psi)$ satisfy the condition
\begin{equation}\label{eq3.6.VZfrMaxcor}
\sup_{r<t<\infty} \Big(1+\ln \frac{t}{r}\Big)\frac{\Psi^{-1}\big(t^{-n}\big)}{\Phi^{-1}\big(t^{-\lambda_1}\big)}  \le C \, \frac{\Psi^{-1}\big(r^{-n}\big)}{\Psi^{-1}\big(r^{-\lambda_2}\big)},
\end{equation}
where $C$ does not depend on $r$.
Then $M_{b,\a}$ is bounded from $M_{\Phi,\lambda_1}(\Rn)$ to $M_{\Psi,\lambda_2}(\Rn)$.
\end{cor}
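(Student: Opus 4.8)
The plan is to read Corollary \ref{ggffddc} as the Orlicz--Morrey specialization of Theorem \ref{3.4.Xcom} obtained from the choice
$$
\varphi_1(x,r)=\frac{\Phi^{-1}\big(r^{-n}\big)}{\Phi^{-1}\big(r^{-\lambda_1}\big)},\qquad
\varphi_2(x,r)=\frac{\Psi^{-1}\big(r^{-n}\big)}{\Psi^{-1}\big(r^{-\lambda_2}\big)}.
$$
First I would invoke the identifications $M_{\Phi,\varphi_1}=M_{\Phi,\lambda_1}$ and $M_{\Psi,\varphi_2}=M_{\Psi,\lambda_2}$ recorded right after the definition of the generalized Orlicz--Morrey space, so that the asserted boundedness of $M_{b,\a}$ from $M_{\Phi,\lambda_1}(\Rn)$ to $M_{\Psi,\lambda_2}(\Rn)$ is literally the boundedness from $M_{\Phi,\varphi_1}(\Rn)$ to $M_{\Psi,\varphi_2}(\Rn)$. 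The standing hypotheses $1<a_\Phi\le b_\Phi<\i$, $1<a_\Psi\le b_\Psi<\i$ together with $\Psi^{-1}(t)=\Phi^{-1}(t)t^{-\a/n}$ are exactly those needed for Lemma \ref{lem5.1.}, hence for Theorem \ref{3.4.Xcom}. Thus it remains only to verify that this pair $(\varphi_1,\varphi_2)$ satisfies hypothesis \eqref{eq3.6.VZfrMaxcom}, and that this hypothesis collapses precisely to the stated condition \eqref{eq3.6.VZfrMaxcor}.

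The reduction rests on one computation. For the above $\varphi_1$ one has, independently of $x$,
$$
\frac{\varphi_1(x,s)}{\Phi^{-1}\big(s^{-n}\big)}=\frac{1}{\Phi^{-1}\big(s^{-\lambda_1}\big)}.
$$
Since $0\le\lambda_1<n$, the map $s\mapsto s^{-\lambda_1}$ is non-increasing and $\Phi^{-1}$ is non-decreasing, so $s\mapsto 1/\Phi^{-1}(s^{-\lambda_1})$ is non-decreasing on $(0,\i)$; as $\Phi^{-1}$ is moreover continuous under the index conditions, its essential infimum over $s\in(t,\i)$ equals its value at $s=t$,
$$
\es_{t<s<\i}\frac{\varphi_1(x,s)}{\Phi^{-1}\big(s^{-n}\big)}=\frac{1}{\Phi^{-1}\big(t^{-\lambda_1}\big)}.
$$
Inserting this together with $\varphi_2(x,r)=\Psi^{-1}(r^{-n})/\Psi^{-1}(r^{-\lambda_2})$ into \eqref{eq3.6.VZfrMaxcom} turns its left-hand side into $\sup_{r<t<\i}\big(1+\ln\frac{t}{r}\big)\Psi^{-1}(t^{-n})/\Phi^{-1}(t^{-\lambda_1})$ and its right-hand side into $C\,\Psi^{-1}(r^{-n})/\Psi^{-1}(r^{-\lambda_2})$, which is exactly condition \eqref{eq3.6.VZfrMaxcor}. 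Hence \eqref{eq3.6.VZfrMaxcor} is equivalent to \eqref{eq3.6.VZfrMaxcom} for this weight pair, and Theorem \ref{3.4.Xcom} delivers both the boundedness and the bound $\|M_{b,\a}f\|_{M_{\Psi,\lambda_2}}\lesssim\|b\|_*\|f\|_{M_{\Phi,\lambda_1}}$.

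The only genuine point to watch—the main obstacle, such as it is for a corollary—is this collapse of the inner essential infimum to its endpoint value, which relies on the monotonicity supplied by $\lambda_1<n$ (for $\lambda_1=0$ the quotient is the constant $1/\Phi^{-1}(1)$ and the identity is trivial); everything else is a direct substitution. As a self-contained alternative I would, if preferred, bypass the abstract condition and argue straight from Lemma \ref{lem5.1.}: multiply its local estimate by $\Psi^{-1}(r^{-\lambda_2})$, take the supremum over $x_0,r$ to form the $M_{\Psi,\lambda_2}$ norm, estimate $\|f\|_{L_{\Phi}(B(x_0,t))}\le\|f\|_{M_{\Phi,\lambda_1}}/\Phi^{-1}(t^{-\lambda_1})$, and close with \eqref{eq3.6.VZfrMaxcor} (legitimate since $\{t>2r\}\subset\{t>r\}$); the two ratios $\Psi^{-1}(r^{-\lambda_2})/\Psi^{-1}(r^{-n})$ and $\Psi^{-1}(r^{-n})/\Psi^{-1}(r^{-\lambda_2})$ then cancel to leave $\lesssim\|b\|_*\|f\|_{M_{\Phi,\lambda_1}}$.
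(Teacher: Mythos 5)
Your proposal is correct and follows exactly the route the paper intends: the corollary is obtained from Theorem \ref{3.4.Xcom} by substituting $\varphi_1(x,r)=\Phi^{-1}(r^{-n})/\Phi^{-1}(r^{-\lambda_1})$ and $\varphi_2(x,r)=\Psi^{-1}(r^{-n})/\Psi^{-1}(r^{-\lambda_2})$, identifying the resulting generalized Orlicz--Morrey spaces with $M_{\Phi,\lambda_1}$ and $M_{\Psi,\lambda_2}$, and observing that the essential infimum in \eqref{eq3.6.VZfrMaxcom} collapses by monotonicity so that the hypothesis becomes \eqref{eq3.6.VZfrMaxcor}. The paper leaves these substitution details unstated; you have supplied them correctly.
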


\begin{rem}
If we take $\Phi(t)=t^{p},\,\Psi(t)=t^{q},\,1\le p,q<\i$ at Corollary \ref{ggffddc} we get Spanne type boundedness of $M_{b,\a}$, i.e. if $0\le\a<n$, $1<p<\frac{n}{\a}$, $0<\lambda<n-\a p$, $\frac{1}{p}-\frac{1}{q}=\frac{\a}{n}$ and $\frac{\lambda}{p}=\frac{\mu}{q}$, then for $p>1$ $M_{b,\a}$ is bounded from $M_{p,\lambda}(\Rn)$ to $M_{q,\mu}(\Rn)$ and for $p=1$, $M_{b,\a}$ is bounded from $M_{1,\lambda}(\Rn)$ to $WM_{q,\mu}(\Rn)$.
\end{rem}

\

\

\end{document}